\newcommand*{\mailto}[1]{\href{mailto:#1}{\nolinkurl{#1}}}
\newcommand{\arxiv}[1]{\href{http://arxiv.org/abs/#1}{arXiv:#1}}
\newcommand{\msc}[1]{\href{http://www.ams.org/msc/msc2010.html?t=&s=#1}{#1}}
\newcommand{\ack}{\section*{Acknowledgments}}
\newtheorem{theorem}{Theorem}[section]
\newtheorem{corollary}[theorem]{Corollary}
\newtheorem{lemma}[theorem]{Lemma}
\newtheorem{remark}[theorem]{Remark}
\newtheorem{hypothesis}{Hypothesis}[section]
\theoremstyle{definition}
\newtheorem{definition}[theorem]{Definition}
\newtheorem{example}[theorem]{Example}
\newcommand{\be}{\begin{equation}}
\newcommand{\ee}{\end{equation}}
\newcommand{\ti}{\tilde}
\newcommand{\id}{{\mathbbm 1}}
\numberwithin{equation}{section}
\DeclareMathOperator{\re}{Re} 
\DeclareMathOperator{\dom}{dom}
\DeclareMathOperator{\ess}{ess}
\DeclareMathOperator{\loc}{loc}
\newcommand\R{{\mathbb{R}}}
\newcommand\C{{\mathbb{C}}}
\newcommand\T{{\mathbb{T}}}
\newcommand\Z{{\mathbb{Z}}}
\newcommand\Q{{\mathbb{Q}}}
\newcommand{\gt}{\mathfrak{t}}
\newcommand\cA{{\mathcal{A}}}
\newcommand\cI{{\mathcal{I}}}
\newcommand\cT{{\mathcal{T}}}
\newcommand\cL{{\mathcal{L}}}
\newcommand\cK{{\mathcal{K}}}
\newcommand\cU{{\mathcal{U}}}
\newcommand\cG{{\mathcal{G}}}
\newcommand\cE{{\mathcal{E}}}
\newcommand\cP{{\mathcal{P}}}
\newcommand\cV{{\mathcal{V}}}
\newcommand\cY{{\mathcal{Y}}}
\newcommand\cC{{\mathcal{C}}}
\newcommand\OO{{\mathcal{O}}}
\newcommand\bH{{\mathbf{H}}}
\newcommand\rh{{\mathbf{h}}}
\newcommand\rH{{\rm{H}}}
\newcommand\E{{\rm{e}}}
\newcommand\mes{{\rm{mes}}}
\newcommand\vol{{\rm{vol}}}
\newcommand\rK{{\rm{K}}}
\newcommand\rD{{\rm{d}}}
\newcommand\comb{{\rm{comb}}}
\def\wt#1{{{\widetilde #1} }}
\newcommand{\floor}[1]{\lfloor #1 \rfloor}
\begin{document}

\title[Spectral Estimates for Quantum Graphs]{Spectral Estimates for Infinite Quantum Graphs}

\author[A. Kostenko]{Aleksey Kostenko}
\address{Faculty of Mathematics and Physics\\ University of Ljubljana\\ Jadranska ul.\ 21\\ 1000 Ljubljana\\ Slovenia\\ and Faculty of Mathematics\\ University of Vienna\\ 
Oskar-Morgenstern-Platz 1\\ 1090 Vienna\\ Austria}
\email{\mailto{Aleksey.Kostenko@fmf.uni-lj.si};\ \mailto{Oleksiy.Kostenko@univie.ac.at}}
\urladdr{\url{http://www.mat.univie.ac.at/~kostenko/}}

\author[N. Nicolussi]{Noema Nicolussi}
\address{Faculty of Mathematics\\ University of Vienna\\
Oskar-Morgenstern-Platz 1\\ 1090 Vienna\\ Austria}
\email{\mailto{noema.nicolussi@univie.ac.at}}

\thanks{{\it Research supported by the Austrian Science Fund (FWF) 
under Grants No.\ P 28807 (A.K. and N.N.) and W 1245 (N.N.).}}
\thanks{{\it Calc.\ Var.\ \& Partial Differential Equations}, {\bf 58}:15 (2019)}
\thanks{\doi{10.1007/s00526-018-1454-3}}
\thanks{\arxiv{1711.02428}}

\keywords{Quantum graph, spectrum, isoperimetric inequality, curvature}
\subjclass[2010]{Primary \msc{34B45}; Secondary \msc{35P15}; \msc{81Q35}}

\begin{abstract}
We investigate the bottom of the spectra of infinite quantum graphs, i.e., Laplace operators on metric graphs having infinitely many edges and vertices. We introduce a new definition of the isoperimetric constant for quantum graphs and then prove the Cheeger-type estimate. Our definition of the isoperimetric constant is purely combinatorial and thus it establishes connections with the combinatorial isoperimetric constant, one of the central objects in spectral graph theory and in the theory of simple random walks on graphs. The latter enables us to prove a number of criteria for quantum graphs to be uniformly positive or to have purely discrete spectrum. We demonstrate our findings by considering trees, antitrees and Cayley graphs of finitely generated groups.
\end{abstract}

\maketitle

{\scriptsize{\tableofcontents}}
\section{Introduction}

The main focus of our paper is on the study of spectra of quantum graphs. The notion of ``quantum graph" refers to a graph $\cG$ considered as a one-dimensional simplicial complex and equipped with a differential operator. The
spectral and scattering properties of Schr\"odinger operators on such structures attracted a considerable interest during the last two decades, as they provide, in particular, relevant models of nanostructured systems (we only mention recent collected works and monographs with a comprehensive bibliography:  \cite{bcfk06},  \cite{bk13}, \cite{ekkst08}, \cite{post}).

Let $\cG$ be a locally finite connected metric graph, that is, a locally finite connected combinatorial graph $\cG_d=(\cV,\cE)$, where each edge $e\in \cE$ is identified with a copy of the interval $[0,|e|]$ and $|\cdot|$ denotes the edge length. We shall always assume throughout the paper that {\em each edge has finite length}, that is, $|\cdot|\colon \cE\to (0,\infty)$. In the Hilbert space $L^2(\cG) = \bigoplus_{e\in\cE} L^2(e)$, we can define the Hamiltonian $\bH$ which acts in this space as the (negative) second derivative $-\frac{d^2}{dx_e^2}$ on every edge $e\in\cE$. To give $\bH$ the meaning of a quantum mechanical energy operator, it must be self-adjoint and hence one needs to impose appropriate boundary conditions at the vertices.  Kirchhoff (also known as Kirchhoff--Neumann) conditions \eqref{eq:kirchhoff} are the most standard ones (cf. \cite{bk13}) and  
the corresponding operator denoted by $\bH$ is usually called a Kirchhoff (Kirchhoff--Neumann) Laplacian (we refer to Sections \ref{ss:II.02}--\ref{ss:III.01} for a precise definition of the operator $\bH$). If the graph $\cG$ is finite ($\cG$ has finitely many vertices and edges), then the spectrum of $\bH$ is purely discrete (see, e.g., \cite{bk13}). During the last few years, a lot of effort has been put in estimating the first nonzero eigenvalue of the operator $\bH$ (notice that $0$ is always a simple eigenvalue if $\cG_d$ is connected) and also in understanding its dependence on various characteristics of the corresponding metric graph including the number of {\em essential} vertices of the graph (vertices of degree $2$ are called {\em inessential}); the number or the total length of the graph's edges;  the edge connectivity of the underlying (combinatorial) graph, etc. 
For further information we refer to a brief selection of recent articles \cite{ari}, \cite{bale}, \cite{bkkm}, \cite{km16}, \cite{kkmm16}, \cite{kn14}, \cite{roh}.  

If the graph $\cG$ is infinite (there are infinitely many vertices and edges), then the corresponding pre-minimal operator $\bH_0$ defined by \eqref{eq:H0} is not automatically essentially self-adjoint. One of the standard conditions to ensure the essential self-adjointness of $\bH_0$ is the existence of a positive lower bound on the edges lengths, $\ell_\ast(\cG) = \inf_{e\in\cE}|e|>0$ (see \cite{bk13}). Only recently several self-adjointness conditions without this rather restrictive assumption have been established in \cite{ekmn}, \cite{kmne17} 
 (see Section \ref{ss:II.03} for further details). Of course, the next natural question is the structure of the spectrum of the operator $\bH$. Clearly, the spectrum of an infinite quantum graph is not necessarily discrete and hence one is interested in the location of the bottom of the spectrum, $\lambda_0(\bH)$, as well as of the bottom of the essential spectrum, $\lambda_0^{\ess}(\bH)$, of $\bH$. Since the graph is infinite, many quantities of interest for finite quantum graphs (e.g., the number of vertices, edges, or its total length) are no longer suitable for these purposes and the corresponding bounds usually lead to trivial estimates. However, it is widely known that quantum graphs in a certain sense interpolate between Laplacians on Riemannian manifolds and difference Laplacians on combinatorial graphs and hence quantum graphs can be investigated by modifying techniques that have been developed for operators on manifolds and graphs and we explore these analogies in the present paper. Notice that this insight has already proved to be very fruitful and it has led to many important results in spectral theory of operators on metric graphs (see, e.g., \cite{bk13}). Although quantum graphs are essentially operators on one-dimensional manifolds, our point of view is that the corresponding results and estimates should be of combinatorial nature.

Our central result is a Cheeger-type estimate for quantum graphs, which establishes lower bounds for $\lambda_0(\bH)$ and $\lambda_0^{\ess}(\bH)$ in terms of the isoperimetric constant $\alpha(\cG)$ of the metric graph $\cG$ (Theorem \ref{th:Cheeger}). Although the Cheeger-type bound for (finite) quantum graphs was proved 30 years ago by S.\ Nicaise (see \cite[Theorem 3.2]{nic}), we give a new purely combinatorial definition of the isoperimetric constant (see Definition \ref{def:iso}) and as a result this establishes a connection with isoperimetric constants for combinatorial graphs (see Lemma \ref{lem:Cheegerconnection} and also \eqref{eq:A=Acomb01}--\eqref{eq:A=Acomb02}). To a certain extent this connection is expected (cf. Theorem \ref{th:below} and also \cite{vB,cat,bgp,post}). Moreover, it was observed recently in \cite{ekmn,kmne17} by using the ideas from \cite{KM10} that spectral properties of the operator $\bH$ are closely connected with the corresponding properties of the discrete Laplacian defined in $\ell^2(\cV;m)$ by the expression
\be\label{eq:1.03}
(\tau_{\cG} f)(v) := \frac{1}{m(v)} \sum_{u\sim v} \frac{f(v) - f(u)}{|e_{u,v}|},\quad v\in\cV,
\ee
where the weight function $m\colon \cV\to \R_{>0}$ is given by 
\be\label{eq:1.04}
m\colon v\mapsto \sum_{u\sim v}|e_{u,v}|. 
\ee
Using this connection, several criteria for $\lambda_0(\bH)$ and $\lambda_0^{\ess}(\bH)$ to be positive have been established in \cite{ekmn}, however, in terms of isoperimetric constants and volume growth of the combinatorial graphs, which were introduced, respectively, in \cite{bkw15} and \cite{fo}, \cite{hkw13} (in this paper we obtain these results as simple corollaries of our estimate \eqref{eq:Cheeger}).
 
 Despite the combinatorial nature of \eqref{eq:alphaG} and \eqref{eq:alphaessG}, it is known that computation of the combinatorial isoperimetric constant is an NP-hard problem \cite{moh89} (see also \cite{hoo,kai} for further details). Motivated by \cite{bkw15} and \cite{dk}, we introduce a quantity, which sometimes is interpreted as a curvature of a graph, leading to estimates for the isoperimetric constants $\alpha(\cG)$ and $\alpha_{\ess}(\cG)$. It also turns out to be very useful in many situations of interest as we show by the examples of trees and antitrees. Another way to estimate isoperimetric constants is provided by the volume growth. Namely, we can apply the exponential volume growth estimates for regular Dirichlet forms from \cite{stu} (see also \cite{hkw13}, \cite{not}) to prove upper bounds (Brooks-type estimates \cite{bro}) for quantum graphs (see Theorem \ref{th:brooks}). However, this can be done under the additional assumption that the metric graph is complete with respect to the natural path metric (notice that in this case $\bH_0$ is essentially self-adjoint and $\bH$ coincides with its closure, see Corollary \ref{cor:gaffney}).

The quantities $\lambda_0(\bH)$ and $\lambda_0^{\ess}(\bH)$ are of fundamental importance for several reasons. From the spectral theory point of view, the positivity of $\lambda_0(\bH)$ or $\lambda_0^{\ess}(\bH)$ corresponds to bounded invertibility or Fredholmness of the operator $\bH$. Moreover, $\lambda_0^{\ess}(\bH)=+\infty$ holds precisely when the spectrum of $\bH$ is purely discrete, which is further equivalent to the compactness of the embedding $H^1_0(\cG)$ into $L^2(\cG)$ (the definition of the form domain $H^1_0(\cG)$ is given in Section \ref{ss:III.01}). It is difficult to overestimate the importance of  $\lambda_0(\bH)$ and $\lambda_0^{\ess}(\bH)$ in applications. For example, in the theory of parabolic equations $\lambda_0(\bH)$ gives the speed of convergence of the system towards equilibrium. On the other hand, Cheeger-type inequalities have a venerable history. Starting from the seminal work of J.\ Cheeger \cite{che}, where a connection between the isoperimetric constant of a compact manifold and a first nontrivial eigenvalue of the Laplace--Beltrami operator was found, this topic became an active area of research in both manifolds and graphs settings. One of the most fruitful applications of Cheeger's inequality in graph theory (this inequality was first proved independently in \cite{dod,dk86} and \cite{al, am}) is in the study of networks connectivity, namely, in constructing expanders (see \cite{cdv,dsv,hoo,lub}). Notice also that the positivity of the isoperimetric constant (also known as a {\em strong isoperimetric inequality}) is of fundamental importance in the study of random walks on graphs (we refer to \cite{woe} for further details). 

Let us now finish the introduction by describing the content of the article. First of all, we review necessary notions and facts on infinite quantum graphs in Section~\ref{sec:QG}, where we introduce the pre-minimal operator $\bH_0$ (Section~\ref{ss:II.02}), discuss its essential self-adjointness (Section~\ref{ss:II.03}) and the corresponding quadratic form $\gt_\cG$ (Section~\ref{ss:III.01}), and also touch upon its connection with the difference Laplacian~\eqref{eq:1.03} (Section~\ref{ss:III.00}). 

Section~\ref{ss:III.02} contains our first main result, Theorem~\ref{th:Cheeger}, which provides the Cheeger-type estimate for quantum graphs. Its proof follows closely the line of arguments as in the manifold case with the only exception, Lemma~\ref{lem:chopensets}, which enables us to replace the isoperimetric constant \eqref{eq:alphaG02} having the form similar to that of in \cite{nic} (see also \cite{km16, post09}) by the quantity \eqref{eq:alphaG} having a combinatorial structure. The latter also reveals connections with the combinatorial isoperimetric constant $\alpha_{\comb}$ from \cite{am,dod}, which measures connectedness of the underlying combinatorial graph, and with the discrete isoperimetric constant $\alpha_d$ introduced recently in \cite{bkw15} for the difference Laplacian \eqref{eq:1.03}. Bearing in mind the importance of both $\alpha_{\comb}$ and $\alpha_d$ in applications as well as the fact that these quantities are widely studied, we discuss these connections in Sections~\ref{ss:III.03}. 

Similar to manifolds and combinatorial Laplacians, one can estimate $\lambda_0(\bH)$ and $\lambda_0^{\ess}(\bH)$ by using the isoperimetric constant not only from below but also from above (Lemma \ref{lem:est03}). However, the price we have to pay is the existence of a positive lower bound on the edges lengths, $\inf_{e\in\cE}|e|>0$. Combining these estimates with the results from Section~\ref{ss:III.03}, we conclude that in this case the positivity of $\lambda_0(\bH)$ (resp., $\lambda_0^{\ess}(\bH)$) is equivalent to the validity of a strong isoperimetric inequality, i.e., $\alpha_{\comb}>0$ (resp., $\alpha_{\comb}^{\ess}>0$). 

In Section~\ref{ss:III.06}, we introduce a quantity which may be interpreted as a curvature of a metric graph. Firstly, using this quantity we are able to obtain estimates on the isoperimetric constant. Secondly, we discuss its connection with the curvatures introduced for combinatorial Laplacians in \cite{dk} and for unbounded difference Laplacians in \cite{bkw15}. The latter, in particular, enables us to obtain simple discreteness criteria for $\sigma(\bH)$ (see Lemma \ref{lem:curv_combess} and Corollary \ref{cor:curv_combess}), which to a certain extent can be seen as the analogs of the discreteness criteria from \cite{dl} and \cite{fuj}. 

The estimates in terms of the volume growth are given in Section~\ref{ss:III.05}. 
In Section~\ref{sec:Examples}, we consider several illustrative examples. The case of trees is treated in Section~\ref{ss:IV.02}. We show that for trees without inessential vertices and loose ends (vertices having degree $1$), $\lambda_0(\bH)>0$ if and only if $\sup_{e}|e|<\infty$. Moreover, the spectrum of $\bH$ is purely discrete if and only if the number $\#\{e\in\cE\colon |e|>\varepsilon\}$ is finite for every $\varepsilon>0$. Notice that under the additional symmetry assumption that a given metric tree is regular similar results, however, for the so-called Neumann Laplacian were observed by M.\ Solomyak \cite{sol04}. The case of antitrees is considered in Section \ref{ss:IV.04}. We provide some general estimates and also focus on two particular examples of exponentially and polynomially growing antitrees. In particular, it turns out that for a polynomially growing antitree, our results provide rather good estimates for $\lambda_0(\bH)$ and $\lambda_0^{\ess}(\bH)$ (see Example \ref{lem:at_qs}). In the last subsection, we consider the case of Cayley graphs of finitely generated groups. Similar to combinatorial Laplacians, the amenability/non-amenability of the underlying group plays a crucial role. 

Finally, in Appendix~\ref{sec:app} we provide a slight improvement to the Cheeger estimates from \cite{bkw15} by noting that one can replace intrinsic path metrics in the definition of isoperimetric constants simply by edge weight functions having {\em an intrinsic} property.

\section{Quantum graphs}\label{sec:QG}
\subsection{Combinatorial and metric graphs}\label{ss:II.01}
In what follows, $\cG_d = (\cV, \cE)$ will be an unoriented graph with countably infinite sets of vertices $\cV$ and edges $\cE$. 
For two vertices $u$, $v\in \cV$ we shall write $u\sim v$ if there is an edge $e_{u,v}\in \cE$ connecting $u$ with $v$. 
For every $v\in \cV$, we denote  the set of edges incident to the vertex $v$ by $\cE_v$ and 
\be\label{eq:combdeg}
\deg_{\cG}(v):= \#\{e|\, e\in\cE_v\} 
\ee
is called {\em the degree} (or {\em combinatorial degree}) of a vertex $v\in\cV$. When there is no risk of confusion which graph is involved, we shall write $\deg$ instead of $\deg_{\cG}$. By $\#(S)$ we denote the cardinality of a given set $S$. 
{\em A path} $\cP$ of length $n\in\Z_{>0}\cup\{\infty\}$ is a sequence of vertices $\{v_0,v_1,\dots, v_n\}$ such that $v_{k-1}\sim v_k$ for all $k\in \{1,\dots,n\}$. If $v_0 = v_n$, then $\cP$ is called a {\em cycle}.

We shall always make the following assumption.

\begin{hypothesis}\label{hyp:locfin}
The infinite graph $\cG_d$ is {\em locally finite} ($\deg(v) < \infty$ for every $v \in \cV$), {\em connected} (for any two vertices $u,v\in\cV$  there is a path connecting $u$ and $v$), and {\em simple} (there are no loops or multiple edges).
\end{hypothesis}

Next we assign each edge $e \in \cE$ a finite length $|e|\in (0,\infty)$. In this case $\cG:=(\cV,\cE,|\cdot|) = (\cG_d,|\cdot|)$ is called {\em a metric graph}. The latter enables us to equip $\cG$ with a topology and metric. Namely, by assigning each edge a direction and calling one of its vertices the initial vertex $e_0$ and the other one the terminal vertex $e_{i}$, every edge $e\in\cE$ can be identified with a copy of the interval $\cI_e= [0,|e|]$; moreover, the ends of the edges that correspond to the same vertex $v$ are identified as well. Thus, $\cG$ can be equipped with {\em the natural path metric} $\varrho_0$ (the distance between two points $x,y\in\cG$ is defined as the length of the ``shortest" path connecting $x$ and $y$). Moreover, a metric graph $\cG$ can be considered as a topological space (one-dimensional simplicial complex). For further details we refer to, e.g., \cite[Chapter 1.3]{bk13}. 

Also throughout this paper we shall assume the following conditions.

\begin{hypothesis}\label{hyp:01}
There is a finite upper bound for lengths of graph edges:
\be\label{eq:hyp1}
\ell^\ast(\cG):= \sup_{e\in\cE} |e|<\infty.
\ee 
\end{hypothesis}

In fact, Hypothesis \ref{hyp:01} is not a restriction for our purposes (see Lemma \ref{lem:est01} and also Remark \ref{rem:3.3}(i)).

\begin{hypothesis}\label{hyp:02}
All edges in $\cG$ are essential, that is, $\deg(v)\neq 2$ for all $v\in\cV$.
\end{hypothesis}

This assumption is not a restriction at  all since vertices of degree $2$ are irrelevant for the spectral properties of the Kirchhoff  Laplacian and hence can be removed (see, e.g., \cite{km16}).

\subsection{Kirchhoff's Laplacian}\label{ss:II.02}

Let $\cG$ be a metric graph satisfying Hypothesis \ref{hyp:locfin}--\ref{hyp:02}. Upon identifying every $e\in\cE$ with a copy of the interval $\cI_e$  and considering $\cG$ as the union of all edges glued together at certain endpoints, let us introduce the Hilbert space $L^2(\cG)$ of functions $f\colon \cG\to \C$ such that 
\[
L^2(\cG) = \bigoplus_{e\in\cE} L^2(e) = \Big\{f=\{f_e\}_{e\in\cE}\big|\, f_e\in L^2(e),\ \sum_{e\in\cE}\|f_e\|^2_{L^2(e)}<\infty\Big\}.
\]
The subspace of compactly supported $L^2(\cG)$ functions will be denoted by
\begin{equation*}
	L^2_c(\cG) = \big\{f \in L^2(\cG)| \; f \neq 0 \text{ only on finitely many edges } e \in \cE\big\}.
\end{equation*}

Next let us equip  $\cG$ with the Laplace operator. For every $e\in\cE$ consider the maximal operator $\rH_{e,\max}$ acting on functions $f\in H^2(e)$ as a negative second derivative. Here and below $H^n(e)$ for $n\in \Z_{\ge 0}$ denotes the usual Sobolev space. In particular, $H^0(e)= L^2(e)$ and 
\[
H^1(e) = \{f\in AC(e)\colon f'\in L^2(e)\},\quad H^2(e) = \{f\in H^1(e)\colon f'\in H^1(e)\}.
\]
Now consider the maximal operator on $\cG$ defined by
\be\label{eq:Hmax}
\bH_{\max} = \bigoplus_{e\in \cE} \rH_{e,\max},\qquad \rH_{e,\max} = -\frac{\rD^2}{\rD x_e^2},\quad \dom(\rH_{e,\max}) = H^2(e).
\ee
For every $f_e\in H^2(e)$ the following quantities 
\begin{align}\label{eq:tr_fe}
f_e(e_o) & := \lim_{x\to e_o} f_e(x), &  f_e(e_i) & := \lim_{x\to e_i} f_e(x),
\end{align}
and 
\begin{align}\label{eq:tr_fe'}
f_e'(e_o) & := \lim_{x\to e_o} \frac{f_e(x) - f_e(e_o)}{|x - e_o|}, & f_e'(e_i) & := \lim_{x\to e_i} \frac{f_e(x) - f_e(e_i)}{|x - e_i|},
\end{align}
are well defined. The Kirchhoff (or Kirchhoff--Neumann) boundary conditions at every vertex $v\in\cV$ are then given by
\be\label{eq:kirchhoff}
\begin{cases} f\ \text{is continuous at}\ v,\\[2mm] \sum_{e\in \cE_v}f_e'(v) =0. \end{cases}
\ee 

Imposing these boundary conditions on the maximal domain $\dom(\bH_{\max})$ and then restricting to compactly supported functions we get the pre-minimal operator
\be\label{eq:H0}
\begin{split}
	\bH_{0} & =  \bH_{\max}\upharpoonright {\dom(\bH_{0})},\\ 
	& \dom(\bH_{0}) = \{f\in \dom(\bH_{\max})\cap L^2_{c}(\cG)|\, f\ \text{satisfies}\ \eqref{eq:kirchhoff},\ v\in\cV\}.
\end{split}
\ee
Integrating by parts one obtains that $\bH_0$ is symmetric. We call its closure {\em the minimal Kirchhoff Laplacian}. Notice that the values of $f$ at the vertices \eqref{eq:tr_fe} and one-sided derivatives \eqref{eq:tr_fe'} do not depend on the choice of orientation on $\cG$. Moreover, the second derivative is also independent of orientation on $\cG$ and hence so is the operator $\bH_0$.

\begin{remark}\label{rem:BC}
If $\deg(v) = 1$, then Kirchhoff's condition \eqref{eq:kirchhoff} at $v$ is simply the Neumann condition
\be\label{eq:neumann}
f_e'(v) = 0.
\ee
Let us mention that one can replace it by the Dirichlet condition
\be\label{eq:dirichlet}
f_e(v) = 0
\ee
and we shall consider the operator $\bH_0$ with mixed boundary conditions (either Neumann or Dirichlet) at the vertices $v\in\cV$ of the graph $\cG$ such that $\deg(v)=1$. 
\end{remark}

In the rest of our paper, we shall denote  by $\cV_D$ (respectively, by $\cV_N$) the set of vertices $v\in\cV$ such that $\deg(v)=1$ and the Dirichlet condition \eqref{eq:dirichlet} (respectively, the Neumann condition \eqref{eq:neumann}) is imposed at $v$. The sets of corresponding edges will be denoted by $\cE_D$ and $\cE_N$, respectively.

\subsection{Self-adjointness}\label{ss:II.03}

In the rest of our paper we shall always assume that the graph $\cG_d$ is infinite, that is, both sets $\cV$ and $\cE$ are infinite  (since $\cG_d$ is assumed to be locally finite).
In this case the operator $\bH_0$ is not necessarily essentially self-adjoint (that is, its closure may have nonzero deficiency indices) and finding self-adjointness criteria is a challenging open problem. The next results were proved recently in \cite{ekmn}. 
 Define the weight function $m\colon\cV\to \R_{>0}$ by 
\be\label{eq:m_def}
m\colon v\mapsto \sum_{e\in\cE_v}|e|, 
\ee
and then let $p_m\colon \cE\to \R_{>0}$ be given by 
\be\label{eq:p_def}
p_m\colon e_{u,v}\mapsto m(u) + m(v). 
\ee
The path metric $\varrho_m$ on $\cV$ generated by $p_m$ is defined by
\be\label{eq:pathmetric_m}
\varrho_m(u,v) := 
\inf_{\cP=\{v_0,\dots,v_n\}\colon v_0=u\ v_n=v}\sum_{k} p_m(e_{v_{k-1},v_k}),  
\ee
where the infimum is taken over all paths connecting $u$ and $v$.

\begin{theorem}[\cite{ekmn}]\label{th:sa}
If $(\cV,\varrho_m)$ is complete as a metric space, then $\bH_0$ is essentially self-adjoint. 
In particular, $\bH_0$ is essentially self-adjoint if
\be\label{eq:mast}
\inf_{v\in\cV} m(v) >0.
\ee
\end{theorem}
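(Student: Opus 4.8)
The plan is to prove that the deficiency spaces of $\bH_0$ are trivial. First I would identify the adjoint: integrating by parts on each edge against a test function $\phi\in\dom(\bH_0)$ and collecting the contributions at each vertex $v$, the boundary terms reduce to $\sum_{e\in\cE_v}\phi_e'(v)\overline{f_e(v)} - \overline{\phi(v)}\sum_{e\in\cE_v}f_e'(v)$; these vanish for \emph{all} admissible $\phi$ precisely when $f$ is continuous at $v$ and $\sum_{e\in\cE_v}f_e'(v)=0$. Hence $\bH_0^\ast=\bH_{\max}$ on the set of $f\in\dom(\bH_{\max})$ satisfying the Kirchhoff conditions \eqref{eq:kirchhoff} at every vertex (the compact-support restriction being dropped). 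Since $\bH_0\ge 0$, essential self-adjointness is equivalent to $\ker(\bH_0^\ast+1)=\{0\}$, so it suffices to show that every $u\in\dom(\bH_{\max})$ which satisfies \eqref{eq:kirchhoff}, lies in $L^2(\cG)$ and solves $u_e''=u_e$ on each edge must vanish.

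The analytic engine is a Gaffney-type cutoff identity. For a continuous, compactly supported, piecewise-$H^1$ function $\chi$ with $0\le\chi\le1$, I test the equation against $\chi^2u$. Integrating $\langle\bH_{\max}u,\chi^2u\rangle$ by parts edge by edge, the vertex terms collapse to $\chi(v)^2\overline{u(v)}\sum_{e\in\cE_v}u_e'(v)=0$ by Kirchhoff, leaving $-\int_\cG\chi^2|u|^2=\int_\cG\chi^2|u'|^2+2\int_\cG\chi\chi'\,u'\overline{u}$. Taking real parts and applying Cauchy--Schwarz and Young to the cross term gives the clean estimate $\int_\cG\chi^2|u|^2\le 2\int_\cG(\chi')^2|u|^2$. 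Thus the whole problem reduces to producing cutoffs $\chi_n\uparrow 1$ whose gradient energy $\int_\cG(\chi_n')^2|u|^2$ tends to $0$; monotone convergence then forces $\|u\|=0$.

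Here the hypothesis enters, and this is the step I expect to be the main obstacle. The identity above is the metric-graph analogue of the manifold argument, and its \emph{natural} intrinsic metric is the path metric $\varrho_0$, for which distance cutoffs automatically satisfy $|\chi_n'|\le\mathrm{const}/n$; completeness with respect to $\varrho_0$ would therefore close the argument immediately (the Gaffney route of Corollary~\ref{cor:gaffney}). The content of the theorem, however, is the weaker, purely combinatorial condition of $\varrho_m$-completeness, and $\varrho_m$ is \emph{not} intrinsic for $\gt_\cG$ — along an edge $e_{u,v}$ the $\varrho_m$-distance grows at rate $(m(u)+m(v))/|e|$ rather than $1$, so a distance cutoff built directly from $\varrho_m$ fails. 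The combinatorial completeness must instead be exploited through the discrete side. Concretely, I would use the boundary-triplet reduction of \cite{KM10} already invoked for \eqref{eq:1.03}: at $z=-1$ each edge solution of $u_e''=u_e$ is a combination of $\cosh,\sinh$ determined by its endpoint values $u(e_o),u(e_i)$ (with $\sinh(|e|)\neq0$, so there are no exceptional edges), the balanced-derivative condition becomes a discrete Helmholtz equation for $u|_{\cV}$, and — using $\ell^\ast(\cG)<\infty$ for uniform constants — one checks $\sum_e\|u_e\|_{L^2(e)}^2\asymp\sum_v m(v)|u(v)|^2$, which is exactly why the weight $m(v)=\sum_{e\in\cE_v}|e|$ and the space $\ell^2(\cV;m)$ appear. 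This identifies $\ker(\bH_0^\ast+1)$ with the corresponding kernel for the minimal discrete operator attached to $\tau_\cG$, so essential self-adjointness of $\bH_0$ is equivalent to that of the discrete operator; the weight $p_m(e_{u,v})=m(u)+m(v)$ is tailored so that completeness of $(\cV,\varrho_m)$ is precisely the metric hypothesis needed for the discrete criterion.

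Finally, the ``in particular'' statement is immediate once the $\varrho_m$-formulation is in place: if $\inf_{v}m(v)=\delta>0$ then every edge has $p_m$-length at least $2\delta$, so distinct vertices are at $\varrho_m$-distance $\ge 2\delta$; the space $(\cV,\varrho_m)$ is uniformly discrete, every Cauchy sequence is eventually constant, and completeness holds trivially. The genuinely delicate points in executing the plan are the rigorous bookkeeping of the edge-to-vertex reduction and verifying that $\varrho_m$-completeness does supply the finiteness-of-balls and divergence conditions demanded by the discrete essential self-adjointness criterion.
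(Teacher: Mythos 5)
Two preliminary remarks. First, the paper contains no proof of Theorem~\ref{th:sa}: it is imported verbatim from \cite{ekmn}, so the only comparison available is with the argument of \cite{ekmn}, whose skeleton the paper itself describes in Section~\ref{ss:III.00}. Measured against that, the first three quarters of your plan are sound and do follow the same architecture: by local finiteness the adjoint of $\bH_0$ is indeed $\bH_{\max}$ restricted by the Kirchhoff conditions \eqref{eq:kirchhoff}; since $\bH_0\ge 0$, essential self-adjointness reduces to $\ker(\bH_0^\ast+1)=\{0\}$; your Caccioppoli-type inequality $\int_\cG\chi^2|u|^2\le 2\int_\cG(\chi')^2|u|^2$ is correct; and the $z=-1$ edge-to-vertex reduction with the two-sided comparison $\sum_e\|u_e\|_{L^2(e)}^2\asymp\sum_v m(v)|u(v)|^2$ (which, as you say, needs $\ell^\ast(\cG)<\infty$, i.e.\ Hypothesis~\ref{hyp:01}, in force throughout) is exactly the \cite{KM10}-type mechanism behind the statement, quoted in Section~\ref{ss:III.00}, that $\bH_0$ and $\rh_0$ are essentially self-adjoint only simultaneously \cite[Corollary 4.1(i)]{ekmn}. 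Your proof of the ``in particular'' part is also correct: $p_m(e)\ge 2\inf_v m(v)$ makes $(\cV,\varrho_m)$ uniformly discrete, hence complete.

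The genuine gap is the one sentence on which everything rests: ``completeness of $(\cV,\varrho_m)$ is precisely the metric hypothesis needed for the discrete criterion.'' There is no off-the-shelf discrete criterion for which this is true. The Gaffney-type theorems for graph Laplacians (\cite{hkmw13}, and the intrinsic-weight version in Appendix~\ref{sec:app}) all require an \emph{intrinsic} metric or weight, and $p_m$ is not intrinsic for $\tau_\cG$ in $\ell^2(\cV;m)$ --- not even up to a multiplicative constant. Indeed, with $b(e)=1/|e|$ and $m$ as in \eqref{eq:m_def}, condition \eqref{eq:intr} demands $\sum_{e\in\cE_v}d(e)^2/|e|\le m(v)$, whereas for $d=p_m$ the Cauchy--Schwarz inequality gives
\begin{equation*}
\sum_{e_{u,v}\in\cE_v}\frac{p_m(e_{u,v})^2}{|e_{u,v}|}\;\ge\; m(v)^2\sum_{e\in\cE_v}\frac{1}{|e|}\;\ge\;\deg_\cG(v)^2\, m(v),
\end{equation*}
so the intrinsic inequality fails at every vertex and fails by an unbounded factor whenever $\deg_\cG$ is unbounded; moreover a short edge attached to a heavy vertex makes $p_m(e)^2/|e|$ arbitrarily large, so no fixed rescaling of $\varrho_m$ helps in exactly the regimes ($\ell_\ast(\cG)=0$, unbounded degree) where the theorem has content. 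The only canonical intrinsic weight here is $d=|\cdot|$ (Remark~\ref{rem:ourLaplace}), whose path metric is $\varrho_0$ --- and that road yields only the strictly weaker Corollary~\ref{cor:gaffney}, as you yourself observed on the continuous side. Thus the passage from $\varrho_m$-completeness to triviality of the transferred discrete kernel is precisely where \cite{ekmn} does genuine work that your plan leaves undone; as it stands, your argument proves Corollary~\ref{cor:gaffney} but not Theorem~\ref{th:sa}.
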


Replacing $p_m$ in \eqref{eq:pathmetric_m} by the edge length $|\cdot|$, we end up with the natural path metric $\varrho_0$ on $\cV$. Clearly, $(\cV,\varrho_m)$ is complete if so is $(\cV,\varrho_0)$ and hence we arrive at the following Gaffney-type theorem for quantum graphs.

\begin{corollary}[\cite{ekmn}]\label{cor:gaffney} 
If $\cG$ equipped with a natural path metric is complete as a metric space, then $\bH_0$ is essentially self-adjoint.
\end{corollary}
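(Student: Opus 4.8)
The plan is to read the corollary as an immediate specialization of Theorem~\ref{th:sa}: it suffices to show that completeness of $\cG$ in the natural path metric forces $(\cV,\varrho_m)$ to be complete, after which essential self-adjointness follows at once. I would build this on two elementary facts, a metric comparison and an observation about the discrete nature of $\cV$.

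First I would record the pointwise comparison $\varrho_m\ge\varrho_0$ on $\cV\times\cV$, which is built into the definitions. Since $e_{u,v}\in\cE_u\cap\cE_v$, one has $p_m(e_{u,v})=m(u)+m(v)\ge|e_{u,v}|$, so every path has $p_m$-length at least its $|\cdot|$-length; taking infima over all paths in \eqref{eq:pathmetric_m} yields $\varrho_m(u,v)\ge\varrho_0(u,v)$ for all $u,v\in\cV$. I would also note that $\cV$ is a closed subset of $\cG$ (its complement is the union of the open edge interiors) and that the restriction to $\cV$ of the natural path metric of $\cG$ is exactly $\varrho_0$; hence completeness of $\cG$ passes to the closed subspace, giving that $(\cV,\varrho_0)$ is complete.

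The decisive observation is that $(\cV,\varrho_0)$ is a discrete metric space. Any path leaving a vertex $v$ must first traverse a full incident edge before reaching another vertex, so $\varrho_0(u,v)\ge\min_{e\in\cE_v}|e|>0$ for every $u\ne v$, the minimum being positive and attained because $\deg(v)<\infty$ by Hypothesis~\ref{hyp:locfin}. Thus every vertex is isolated, and in a space all of whose points are isolated a sequence converges if and only if it is eventually constant; completeness is therefore equivalent to saying that every Cauchy sequence is eventually constant. This is precisely what repairs the otherwise unfavorable direction of the inequality $\varrho_m\ge\varrho_0$: for general metrics completeness does not pass from a weaker metric to a stronger one, but for discrete spaces it does.

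Combining the pieces: given a $\varrho_m$-Cauchy sequence $(v_n)$, the comparison makes it $\varrho_0$-Cauchy; completeness of $(\cV,\varrho_0)$ together with discreteness makes it eventually constant; and an eventually constant sequence is trivially $\varrho_m$-convergent. Hence $(\cV,\varrho_m)$ is complete and Theorem~\ref{th:sa} applies. The step I expect to be the main obstacle to a naive argument is exactly this discreteness reduction, since $\varrho_m$ is the stronger metric and the implication ``$\varrho_0$ complete $\Rightarrow\varrho_m$ complete'' would be false without it.
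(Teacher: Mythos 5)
Your proof is correct and takes essentially the same route as the paper: the paper also deduces the corollary from Theorem~\ref{th:sa} by observing that $p_m(e_{u,v})\ge |e_{u,v}|$ forces $\varrho_m\ge\varrho_0$, so that completeness of $(\cV,\varrho_0)$ implies completeness of $(\cV,\varrho_m)$. The only difference is that the paper labels this transfer of completeness as ``clear,'' whereas you supply the missing justification --- that $(\cV,\varrho_0)$ is a discrete metric space (by local finiteness), so Cauchy sequences in a complete such space are eventually constant --- together with the routine reduction from completeness of $\cG$ to completeness of its closed subset $\cV$, both of which are exactly the right details to fill in.
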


The next well known result (see \cite[Theorem 1.4.19]{bk13}) also immediately follows from Theorem \ref{th:sa}.
\begin{corollary}\label{cor:inf} 
If 
\be\label{eq:ell>0}
\ell_\ast(\cG) := \inf_{e\in\cE} |e|>0,
\ee
then $\bH_0$ is essentially self-adjoint.
\end{corollary}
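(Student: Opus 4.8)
The plan is to reduce the claim to Theorem~\ref{th:sa} by verifying its sufficient condition \eqref{eq:mast}, namely that $\inf_{v\in\cV} m(v)>0$. Set $c:=\ell_\ast(\cG)>0$, so that $|e|\ge c$ for every $e\in\cE$. Recalling the definition \eqref{eq:m_def}, $m(v)=\sum_{e\in\cE_v}|e|$, where the sum ranges over the $\deg(v)$ edges incident to $v$; summing the bound $|e|\ge c$ over these edges yields $m(v)\ge c\,\deg(v)$ for every $v\in\cV$.

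It then remains to bound $\deg(v)$ from below uniformly. Since $\cG_d$ is connected (Hypothesis~\ref{hyp:locfin}) and $\cV$ is infinite, no vertex can be isolated, so $\deg(v)\ge 1$ for every $v\in\cV$. Consequently $m(v)\ge c$ for all $v$, whence $\inf_{v\in\cV} m(v)\ge c>0$, and Theorem~\ref{th:sa} immediately gives the essential self-adjointness of $\bH_0$. I do not expect a genuine obstacle here: the statement is a direct corollary, and the single substantive point is the elementary observation that a uniform positive lower bound on the edge lengths propagates, through the degree count, to a uniform positive lower bound on the vertex weights $m(v)$. (Note that Hypothesis~\ref{hyp:02}, excluding $\deg(v)=2$, plays no role in this argument; only connectivity is used, and even that merely to rule out the degenerate case $\deg(v)=0$.)
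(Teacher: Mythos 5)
Your proof is correct and takes essentially the same route as the paper, which states that the corollary ``immediately follows from Theorem \ref{th:sa}'': one verifies \eqref{eq:mast} via the bound $m(v)=\sum_{e\in\cE_v}|e|\ge \ell_\ast(\cG)\deg(v)\ge \ell_\ast(\cG)>0$, exactly as you do. Your observation that only connectivity is needed (to rule out $\deg(v)=0$) and that Hypothesis \ref{hyp:02} plays no role here is also accurate.
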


\subsection{Quadratic forms}\label{ss:III.01}

In this section we present the variational definition of the Kirchhoff Laplacian. Consider the quadratic form
\be\label{eq:gt0}
\gt_\cG^0[f]:= (\bH_0 f,f)_{L^2(\cG)},\qquad f\in \dom(\gt^0_\cG):=\dom(\bH_0).
\ee
For every $f\in \dom(\bH_0)$, an integration by parts gives
\be\label{eq:gt00}
\gt^0_\cG[f] = \int_\cG |f'(x)|^2\,dx = \|f'\|^2_{L^2(\cG)}.
\ee
Clearly, the form $\gt_\cG^0$ is nonnegative. Moreover, it is closable since $\bH_0$ is symmetric. Let us denote its closure by $\gt_\cG$ and the corresponding domain by $H^1_0(\cG):=\dom(\gt_\cG)$. By the first representation theorem, there is a unique nonnegative self-adjoint operator corresponding to the form $\gt_\cG$.

\begin{definition}\label{def:KLapl}
The self-adjoint nonnegative operator $\bH$ associated with the form $\gt_\cG$ in $L^2(\cG)$ will be called {\em the Kirchhoff Laplacian}.
\end{definition}

If the pre-minimal operator $\bH_0$ is essentially self-adjoint, then $\bH$ coincides with its closure. In the case when $\bH_0$ is a symmetric operator with nontrivial deficiency indices, the operator $\bH$ is the Friedrichs extension of $\bH_0$.  

\begin{remark}\label{rem:maxform}
Of course, one may consider the maximally defined form
\be
\gt_\cG^{(N)}[f] :=   \int_\cG |f'(x)|^2\,dx,\qquad f\in \dom(\gt^{(N)}_\cG),
\ee
where
\be
\dom(\gt^{(N)}_\cG):= \{f\in L^2(\cG)|\ f\in H^1_{\loc}(\cG),\ f'\in L^2(\cG)\} = :H^1(\cG),
\ee
and then associate a self-adjoint positive operator, let us denote it by $\bH^N$, with this form in $L^2(\cG)$.  
Clearly, the forms $\gt_\cG$ and $\gt_\cG^{(N)}$ coincide if and only if $\bH$ is the unique positive self-adjoint extension of $\bH_0$ (this in particular holds if $\bH_0$ is essentially self-adjoint). We are not aware of a description of the self-adjoint operator $\bH^N$ associated with the form $\gt_\cG^{(N)}$ if the pre-minimal operator has nontrivial deficiency indices (however, see the recent work \cite{car11,klss}). Moreover, to the best of our knowledge, the description of deficiency indices of $\bH_0$ and its self-adjoint extensions is a widely open problem.  
\end{remark}

If at some vertices $v\in\cV$ with $\deg(v)=1$ the Neumann condition \eqref{eq:neumann} is replaced by the Dirichlet condition \eqref{eq:dirichlet}, then the corresponding form domain will be denoted by $\wt{H}^1_0(\cG)$. Notice that 
\be\label{eq:DN}
\wt{H}^1_0(\cG) = \{f\in H^1_0(\cG)|\ f_e(v)=0,\ v\in \cV_D\}.
\ee
By abusing the notation, we shall denote the corresponding self-adjoint operator by $\bH$. 
The bottom of the spectrum of $\bH$ can be found by using the Rayleigh quotient 
\be\label{eq:Rayleigh}
\lambda_0(\bH):=\inf \sigma(\bH) = \inf_{\substack{f\in \wt H^1_0(\cG)\\ f\neq0}}\frac{(\bH f,f)_{L^2(\cG)}}{\|f\|^2_{L^2(\cG)}} = \inf_{\substack{f\in \wt H^1_0(\cG)\\ f\neq0}}\frac{\|f'\|^2_{L^2(\cG)}}{\|f\|^2_{L^2(\cG)}}.
\ee
Moreover, the bottom of the essential spectrum is given by 
\be\label{eq:Rayleighess}
\lambda_0^{\ess}(\bH):=\inf \sigma_{\ess}(\bH) = \sup_{\wt\cG\subset\cG} \inf_{\substack{f\in \wt{H}^1_0(\cG\setminus\wt\cG)\\ f\neq0}}\frac{\|f'\|^2_{L^2(\cG\setminus\wt\cG)}}{\|f\|^2_{L^2(\cG\setminus\wt\cG)}},
\ee
where the $\sup$ is taken over all finite subgraphs $\wt\cG$ of $\cG$. Here for any $\wt{\cG} \subset \cG$ we define $ \wt H^1_0(\cG\setminus\wt\cG)$ as the set of $H^1_0(\cG\setminus\wt\cG)$ functions satisfying the following boundary conditions: for vertices in  $\cG\setminus\wt\cG$ having one or more edges in $\wt \cG$, we change the boundary conditions from Kirchhoff--Neumann to Dirichlet; for all other vertices in $\cG \setminus \wt \cG$, we leave them the same. This equality is known as a Persson-type theorem (or Glazman's decomposition principle in the Russian literature, see \cite{gla}) and its proof in the case of quantum graphs is analogous to the case of Schr\"odinger operators (see, e.g., \cite[Theorem 3.12]{cfks}). 

\begin{remark}\label{rem:finitevol}
Let us mention that the following equivalence holds true
\be\label{eq:lam0=0}
\lambda_0(\bH) = 0 \qquad \Longleftrightarrow \qquad \lambda_0^{\ess}(\bH)=0.
\ee
The implication ``\ $\Leftarrow$\ " is obvious. However, $\lambda_0(\bH) = 0$ and $\lambda_0^{\ess}(\bH)\neq0$ holds only if $0$ is an isolated eigenvalue. On the other hand, \eqref{eq:gt00} implies that $0$ is an eigenvalue of $\bH$ only if $\id \in L^2(\cG)$. The latter happens exactly when 
\[
\mes(\cG):= \sum_{e\in\cE}|e|<\infty.
\]
and hence the equivalence \eqref{eq:lam0=0} holds true whenever $\mes(\cG)=\infty$, 
 
 On the other hand, it turns out that $\id\notin H^1_0(\cG)$ if $\mes(\cG)<\infty$ and hence $0$ is never an eigenvalue of $\bH$ (see Corollary \ref{cor:3.5}(iv)). In particular, the latter implies that $\gt_\cG\neq \gt_\cG^{(N)}$ if the metric graph $\cG$ has finite total volume, $\mes(\cG)<\infty$. The analysis of this case is postponed to a separate publication. 
\end{remark}

If $ \cG_1$, $ \cG_2$ are finite subgraphs with $ \cG_1 \subseteq  \cG_2 \subset \cG$, then $\wt{H}^1_0(\cG \setminus  \cG_2) \subseteq \wt{H}^1_0(\cG \setminus  \cG_1)$ in the sense that every function in $\wt{H}^1_0 (\cG \setminus  \cG_2)$ can be extended to be in $\wt{H}^1_0 (\cG \setminus  \cG_1)$ by setting it zero on remaining edges. Thus,
\[
		\inf_{\substack{f\in \wt{H}^1_0(\cG\setminus \cG_2)\\ f\neq0}}\frac{\|f'\|^2_{L^2(\cG \setminus\cG_2)}}{\|f\|^2_{L^2(\cG \setminus \cG_2)}} \geq
		\inf_{\substack{f\in \wt{H}^1_0(\cG\setminus \cG_1)\\ f\neq0}}\frac{\|f'\|^2_{L^2(\cG \setminus \cG_1)}}{\|f\|^2_{L^2(\cG\setminus\cG_1)}}.
\]
Let $\cK_\cG$ be the set of all finite, connected subgraphs of $\cG$ ordered by the inclusion relation ``$\subseteq$" and hence $\cK_\cG$ is a net. Moreover,
\begin{equation} \label{eq:lambdaesslimit}
	\lambda_0^{\ess}(\bH) = \sup_{\wt\cG \in \cK_\cG} \inf_{\substack{f\in \wt{H}^1_0(\cG\setminus\wt\cG)\\ f\neq0}}\frac{\|f'\|^2_{L^2(\cG\setminus\wt\cG)}}{\|f\|^2_{L^2(\cG\setminus\wt\cG)}} = \lim_{\wt \cG \in \cK_\cG} \inf_{\substack{f\in \wt{H}^1_0(\cG\setminus\wt\cG)\\ f\neq0}}\frac{\|f'\|^2_{L^2(\cG\setminus\wt\cG)}}{\|f\|^2_{L^2(\cG\setminus\wt\cG)}},
\end{equation}
where the limit is understood in the sense of nets and in this case we will say that $\wt\cG$ tends to $\cG$. 

The next result provides an estimate, which easily follows from \eqref{eq:Rayleigh}--\eqref{eq:Rayleighess}.

\begin{lemma}\label{lem:est01}
Set 
\be\label{eq:est02}
\ell^\ast_{\ess}(\cG) := \inf_{\wt\cE} \sup_{e\in\cE\setminus\wt\cE} |e|,
\ee
where the infimum is taken over all finite subsets $\wt\cE$ of $\cE$. Then
\begin{align}\label{eq:est01}
\lambda_0(\bH) & \le   
\frac{\pi^2}{\ell^\ast(\cG)^2}, & \lambda_0^{\ess}(\bH) & \le  
 \frac{\pi^2}{\ell^\ast_{\ess}(\cG)^2}.
\end{align}
\end{lemma}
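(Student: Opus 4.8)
The plan is to produce, for each edge, an explicit test function supported on that single edge and to feed it into the variational characterizations \eqref{eq:Rayleigh} and \eqref{eq:Rayleighess}. First I would fix an edge $e\in\cE$, identify it with $[0,|e|]$, and consider the function $f$ equal to $\sin(\pi x/|e|)$ on $e$ and extended by zero to all other edges. The crucial observation---and the point I expect to require the most care---is that this $f$ belongs to the form domain $\wt H^1_0(\cG)$ even though it does not lie in $\dom(\bH_0)$: since $f$ vanishes at both endpoints of $e$, it is continuous at every vertex (with value zero there) and is clearly in $H^1$ with compact support, whereas the Kirchhoff derivative matching condition $\sum_{e\in\cE_v}f_e'(v)=0$ is a natural boundary condition encoded in the operator, not a constraint on the form domain. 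Concretely, $f$ is approximated in the form norm by smooth bumps on $e$ vanishing near the endpoints, and these trivially satisfy \eqref{eq:kirchhoff}, so $f$ lies in the closure $H^1_0(\cG)$; the same applies with either Neumann or Dirichlet conditions at degree-one vertices, hence $f\in\wt H^1_0(\cG)$.

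Then I would compute $\|f\|^2_{L^2(\cG)}=|e|/2$ and $\|f'\|^2_{L^2(\cG)}=\pi^2/(2|e|)$, giving the Rayleigh quotient $\pi^2/|e|^2$. Plugging into \eqref{eq:Rayleigh} yields $\lambda_0(\bH)\le \pi^2/|e|^2$ for every $e\in\cE$; taking the infimum over $e$ and using $\inf_{e}|e|^{-2}=(\sup_e|e|)^{-2}=\ell^\ast(\cG)^{-2}$ gives the first bound.

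For the essential part I would use the Persson-type formula \eqref{eq:Rayleighess}. For any finite subgraph $\wt\cG$ with edge set $\wt\cE$ and any edge $e\in\cE\setminus\wt\cE$, the same single-edge function $f$ is supported in $\cG\setminus\wt\cG$ and lies in $\wt H^1_0(\cG\setminus\wt\cG)$ (it vanishes at the endpoints of $e$, hence satisfies whatever boundary condition is imposed there), so the inner infimum is $\le\pi^2/|e|^2$. Taking the infimum over $e\in\cE\setminus\wt\cE$ bounds it by $\pi^2/\bigl(\sup_{e\in\cE\setminus\wt\cE}|e|\bigr)^2$, and since $A(\wt\cG)\le B(\wt\cG)$ for all $\wt\cG$ implies $\sup A\le\sup B$, the supremum over finite subgraphs gives $\lambda_0^{\ess}(\bH)\le\pi^2/\bigl(\inf_{\wt\cE}\sup_{e\in\cE\setminus\wt\cE}|e|\bigr)^2=\pi^2/\ell^\ast_{\ess}(\cG)^2$.

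The only bookkeeping point is that the infimum over finite subgraphs $\wt\cG$ appearing in \eqref{eq:Rayleighess} and the infimum over finite edge sets $\wt\cE$ in the definition \eqref{eq:est02} of $\ell^\ast_{\ess}(\cG)$ agree. This follows because every finite subgraph has a finite edge set (so one direction is immediate), while every finite edge set $\wt\cE$ is contained in the edge set of some finite connected subgraph---using that $\cG$ is connected and locally finite, one can join the edges of $\wt\cE$ by finitely many paths---and enlarging the removed set only decreases $\sup_{e\in\cE\setminus\wt\cE}|e|$, which yields the reverse inequality.
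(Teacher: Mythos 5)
Your proposal is correct and takes essentially the same route as the paper: both arguments bound the Rayleigh quotients \eqref{eq:Rayleigh}--\eqref{eq:Rayleighess} by test functions lying in $H^1_0(e)$ on a single edge, for which the optimal quotient is $(\pi/|e|)^2$ --- the paper phrases this through the decomposition $f=f_{\rm lin}+f_0$ of \eqref{eq:formdecomp} rather than the explicit function $\sin(\pi x/|e|)$, but the substance is identical. Your additional care in verifying that the single-edge function belongs to $\wt H^1_0(\cG)$ (via approximation by bumps vanishing near the endpoints) and in reconciling the supremum over finite subgraphs in \eqref{eq:Rayleighess} with the infimum over finite edge sets in \eqref{eq:est02} correctly fills in details the paper leaves implicit.
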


\begin{proof}
By construction, the set $\wt{H}^1_c(\cG):=\wt{H}^1_0(\cG)\cap L^2_c(\cG)$ is a core for $\gt_\cG$. Moreover, every $f\in \wt{H}^1_0(\cG)$ admits a unique decomposition $f=f_{\rm lin} + f_0$, where $f_{\rm lin}\in \wt{H}^1_0(\cG)$ is piecewise linear on $\cG$ (that is, it is linear on every edge $e\in\cE$) and $f_0\in \wt{H}^1_0(\cG)$ takes zero values at the vertices $\cV$. It is straightforward to check that 
\be\label{eq:formdecomp}
\gt_{\cG}[f]=\int_{\cG} |f'(x)|^2 dx = \int_{\cG} |f_{\rm lin}'(x)|^2 dx + \int_{\cG} |f_0'(x)|^2 dx = \gt_{\cG}[f_{\rm lin}] + \gt_{\cG}[f_0]. 
\ee
Now the estimates \eqref{eq:est01} and \eqref{eq:est02} easily follow from the decomposition \eqref{eq:formdecomp}. Indeed, for every $f=f_0 \in \wt{H}^1_0(\cG)$
\be
\gt_\cG[f_0] = \sum_{e\in\cE} \|f_{0,e}'\|^2_{L^2(e)},
\ee
where $f_{0,e}:= f_0\upharpoonright e \in H^1_0(e)$. Noting that
\[
\inf_{f\in H^1_0([0,l])}\frac{\|f'\|^2_{L^2}}{\|f\|^2_{L^2}} = \left(\frac{\pi}{l}\right)^2,
\]
and then taking into account \eqref{eq:Rayleigh} and  \eqref{eq:Rayleighess}, we arrive at \eqref{eq:est01}. 
\end{proof}

\begin{remark}\label{rem:3.3}
A few remarks are in order:
\begin{itemize}
\item[(i)] The estimate \eqref{eq:est01} shows that the condition \eqref{eq:hyp1} is not a restriction since in the case $\ell^\ast(\cG)=\infty$ one immediately gets $\lambda_0(\bH)=\lambda_0^{\ess}(\bH)=0$. Moreover, in this case $\sigma(\bH)$ coincides with the positive semi-axis $\R_{\ge 0}$ (see \cite[Theorem 5.2]{sol03}).
\item[(ii)] The second inequality in \eqref{eq:est01} implies that \eqref{eq:est02} is necessary for the spectrum of $\bH$ to be purely discrete. Notice that $\ell^\ast_{\ess}(\cG)=0$ means that the number $\#\{e\in\cE|\, |e|>\varepsilon\}$ is finite for every $\varepsilon>0$.
\item[(iii)]
The estimates \eqref{eq:est01} can be slightly improved by noting that we can use other test functions on the edges $e\in\cE_N$  to improve the bound $(\pi/|e|)^2$ by $(\pi/2|e|)^2$. For example, we get the following estimate
\be\label{eq:est01B}
\lambda_0(\bH) \le  \min\Big\{ \inf_{e\in\cE\setminus\cE_N} \left(\frac{\pi}{|e|}\right)^2, \inf_{e\in \cE_N} \left(\frac{\pi}{2|e|}\right)^2\Big\}.
\ee
\end{itemize}
\end{remark}

\subsection{Connection with the difference Laplacian}\label{ss:III.00}

In this section we restrict for simplicity to the case of Neumann boundary conditions at the loose ends, that is, $f_e'(v)=0$ for all $v\in \cV$ with $\deg(v)=1$. 
Let the weight function $m\colon \cV\to \R_{>0}$ be given by \eqref{eq:m_def}. Consider the
difference Laplacian defined in $\ell^2(\cV;m)$ by the expression
\be\label{eq:tau}
(\tau_{\cG} f)(v) := \frac{1}{m(v)} \sum_{u\sim v} \frac{f(v) - f(u)}{|e_{u,v}|},\quad v\in\cV.
\ee
Namely, $\tau_\cG$ generates in $\ell^2(\cV;m)$ the pre-minimal operator 
\be\label{eq:h_def}
\begin{array}{cccc}
\rh_0  \colon & \dom(\rh_0) & \to & \ell^2(\cV;m) \\
 & f & \mapsto & \tau_\cG f 
\end{array},\qquad \dom(\rh_0):= C_c(\cV),
\ee
where $C_c(\cV)$ is the space of finitely supported functions on $\cV$. 
The operator $\rh_0$ is a nonnegative symmetric operator. Denote its Friedrichs extension by $\rh$. 

It was observed in \cite{ekmn} that the operators $\bH$ and $\rh$ are closely connected (for instance, by \cite[Corollary 4.1(i)]{ekmn}, $\bH_0$ and $\rh_0$ are essentially self-adjoint only simultaneously).  
In fact, it is not difficult to notice a connection between $\bH$ and $\rh$ by considering their quadratic forms (see \cite[Remark 3.7]{ekmn}). Namely, let $\cL= \ker(\bH_{\max})$ be the kernel of $\bH_{\max}$, which consists of piecewise linear functions on $\cG$. Every $f\in \cL$ can be identified with its values  $\{f(e_i), f(e_o)\}_{e\in \cE}$ on $\cV$ and, moreover,  
\be
\|f\|^2_{L^2(\cG)} = \sum_{e\in\cE} |e| \frac{|f(e_i)|^2 + \re(f(e_i)f(e_o)^\ast) + |f(e_o)|^2}{3}.
\ee
Now restrict ourselves to the subspace $\cL_{cont} = \cL\cap C_c(\cG)$. Clearly,  
\[
 \sum_{e\in\cE} |e| ({|f(e_i)|^2 + |f(e_o)|^2}) = \sum_{v\in\cV} |f(v)|^2\sum_{e\in \cE_v}|e|= \|f\|^2_{\ell^2(\cV;m)}
\]
defines an equivalent norm on $\cL_{cont}$ since the Cauchy--Schwarz inequality immediately implies
\be\label{eq:norms}
\frac{1}{6}\|f\|^2_{\ell^2(\cV;m)} \le \|f\|^2_{L^2(\cG)} \le \frac{1}{2}\|f\|^2_{\ell^2(\cV;m)}.
\ee
On the other hand, for every $f\in \cL_{cont}$ we get
\be\label{eq:spec_conn}
\begin{split}
\gt_\cG[f] = (\bH f,f)_{L^2(\cG)} & = \sum_{e\in\cE} \int_{e} |f'({x_e})|^2 d{x_e} = \sum_{e\in\cE} \frac{|f(e_o) - f(e_i)|^2}{|e| }\\
&=\frac{1}{2}\sum_{u,v\in \cV} \frac{|f(v) - f(u)|^2}{|e_{u,v}|} =(\rh f,f)_{\ell^2(\cV;m)}=:\gt_{\rh}[f].
\end{split}
\ee

Hence we end up with the following estimate.

\begin{lemma}\label{lem:estA}
\begin{align}\label{eq:estA}
 \lambda_0(\bH) & \le 6\lambda_0(\rh), & \lambda_0^{\ess}(\bH) & \le 6\lambda_0^{\ess}(\rh).
\end{align}
\end{lemma}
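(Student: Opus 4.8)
The plan is to establish the two inequalities in \eqref{eq:estA} as an immediate consequence of the variational characterizations \eqref{eq:Rayleigh} and \eqref{eq:Rayleighess}, combined with the norm equivalence \eqref{eq:norms} and the form identity \eqref{eq:spec_conn} already derived on the subspace $\cL_{cont}=\cL\cap C_c(\cG)$. The guiding idea is that piecewise linear, compactly supported functions provide an admissible family of test functions for the Rayleigh quotient of $\bH$, on which the quantum graph form $\gt_\cG$ and the difference form $\gt_\rh$ agree, while the two $L^2$-norms differ only by the universal constants in \eqref{eq:norms}.

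For the first estimate I would argue as follows. Given any finitely supported $g\in C_c(\cV)$, let $f\in\cL_{cont}$ be the unique piecewise linear extension of $g$ to $\cG$; this is exactly the identification used in Section~\ref{ss:III.00}. Since $\cL_{cont}\subset \wt H^1_0(\cG)$, such $f$ is admissible in \eqref{eq:Rayleigh}. Using \eqref{eq:spec_conn} for the numerator and the lower bound $\|f\|^2_{L^2(\cG)}\ge \tfrac16\|f\|^2_{\ell^2(\cV;m)}$ from \eqref{eq:norms} for the denominator, one obtains
\be\nn
\frac{\|f'\|^2_{L^2(\cG)}}{\|f\|^2_{L^2(\cG)}}
= \frac{\gt_\rh[g]}{\|f\|^2_{L^2(\cG)}}
\le \frac{\gt_\rh[g]}{\tfrac16\|g\|^2_{\ell^2(\cV;m)}}
= 6\,\frac{\gt_\rh[g]}{\|g\|^2_{\ell^2(\cV;m)}}.
\ee
Taking the infimum over $g\in C_c(\cV)\setminus\{0\}$ on the right, which computes $\lambda_0(\rh)$ since $C_c(\cV)$ is a form core for the Friedrichs extension $\rh$, and bounding the left-hand infimum over $\cL_{cont}$ from above by the full infimum \eqref{eq:Rayleigh} defining $\lambda_0(\bH)$, yields $\lambda_0(\bH)\le 6\lambda_0(\rh)$.

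For the essential-spectrum bound I would run the same comparison localized to the complement of a finite subgraph. Fixing a finite connected $\wt\cG\in\cK_\cG$ and restricting the piecewise linear identification to vertices and edges outside $\wt\cG$ (imposing Dirichlet conditions at the boundary vertices, exactly as in the definition preceding \eqref{eq:lambdaesslimit}), the identity \eqref{eq:spec_conn} and the bound \eqref{eq:norms} hold verbatim on $\cL_{cont}\cap \wt H^1_0(\cG\setminus\wt\cG)$. This gives, for each such $\wt\cG$,
\be\nn
\inf_{\substack{f\in \wt H^1_0(\cG\setminus\wt\cG)\\ f\neq0}}\frac{\|f'\|^2_{L^2(\cG\setminus\wt\cG)}}{\|f\|^2_{L^2(\cG\setminus\wt\cG)}}
\le 6\inf_{\substack{g\in C_c(\cV\setminus\wt\cV)\\ g\neq0}}\frac{\gt_\rh[g]}{\|g\|^2_{\ell^2(\cV;m)}},
\ee
and passing to the supremum over $\wt\cG\in\cK_\cG$ on both sides, via the Persson-type formula \eqref{eq:Rayleighess}--\eqref{eq:lambdaesslimit} and its analog for $\rh$, gives $\lambda_0^{\ess}(\bH)\le 6\lambda_0^{\ess}(\rh)$.

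The main obstacle is not the algebra, which is essentially done in \eqref{eq:norms}--\eqref{eq:spec_conn}, but the bookkeeping needed to justify that restricting the test functions to the piecewise linear subspace $\cL_{cont}$ only \emph{increases} the Rayleigh quotient infimum, so that the resulting bound is genuinely an upper bound for $\lambda_0(\bH)$ rather than for some smaller quantity. Concretely one must check that the map $g\mapsto f$ sends the relevant form cores $C_c(\cV)$ and $C_c(\cV\setminus\wt\cV)$ into $\wt H^1_0(\cG)$ and $\wt H^1_0(\cG\setminus\wt\cG)$ respectively, with matching Dirichlet boundary behavior, so that the infima line up correctly; the direction of the inequality in \eqref{eq:norms} used must also be the one that inflates the denominator. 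Once these compatibilities are in place the estimate is immediate.
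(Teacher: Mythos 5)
Your proof is correct and takes essentially the same route as the paper: restrict the Rayleigh quotient \eqref{eq:Rayleigh} to the piecewise linear subspace $\cL_{cont}$, apply the form identity \eqref{eq:spec_conn} to the numerator and the lower bound in \eqref{eq:norms} to the denominator, and use that $C_c(\cV)$ is a form core for the Friedrichs extension $\rh$. The paper writes out only the first inequality (leaving the essential-spectrum case implicit), and your localized argument via \eqref{eq:Rayleighess}--\eqref{eq:lambdaesslimit} together with the discrete decomposition-principle bound (as in the $P\rh P$ argument of Appendix~\ref{sec:app}) is precisely the intended analogue, with the boundary-condition bookkeeping handled correctly.
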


\begin{proof}
Clearly, the Rayleigh quotient \eqref{eq:Rayleigh} together with \eqref{eq:norms} and \eqref{eq:spec_conn} imply
\begin{align*}
\lambda_0(\bH) = \inf_{f\in H^1_0(\cG)} \frac{\gt_\cG[f]}{\|f\|^2_{L^2(\cG)}} &\le  \inf_{f\in \cL_{cont}} \frac{\gt_\cG[f]}{\|f\|^2_{L^2(\cG)}} \\
&\le \inf_{f \in C_c(\cV)} \frac{\gt_\rh[f]}{\frac{1}{6}\|f\|^2_{\ell^2(\cV;m)}} = 6\lambda_0(\rh).\qedhere
\end{align*}
\end{proof}

If $\cG$ is {\em equilateral} (that is, $|e|= 1$ for all $e\in \cE$), then $m(v) = \deg(v)$ for all $v\in\cV$ and hence $\tau_\cG$ coincides with the {\em combinatorial Laplacian}
\be\label{eq:tau_comb}
(\tau_{\rm comb} f)(v) := \frac{1}{\deg_\cG(v)} \sum_{u\sim v} f(v) - f(u),\quad v\in\cV.
\ee
In this particular case spectral relations between $\bH$ and $\rh$ have already been observed by many authors (see \cite{vB}, \cite[Theorem 1]{cat}, \cite{ex97} and \cite[Theorem 3.18]{bgp}). 

\begin{theorem}\label{th:below}
If $|e|=1$ for all $e\in\cE$, then 
\begin{align}\label{eq:conn_vBCat}
\lambda_0(\rh) & = 1 - \cos\big(\sqrt{\lambda_0(\bH)}\big), & \lambda_0^{\ess}(\rh) & = 1 - \cos\big(\sqrt{\lambda_0^{\ess}(\bH)}\big).
\end{align}
\end{theorem}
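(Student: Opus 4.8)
The plan is to realise the correspondence through the \emph{energy--dependent harmonic extensions} of vertex data, which collapse the comparison of the two Rayleigh quotients into a single algebraic identity. Fix $k\in(0,\pi)$ and, for $\varphi\in C_c(\cV)$, let $f^k_\varphi$ be the function equal on each edge $e=e_{u,v}$ to the solution of $-f''=k^2f$ with boundary values $\varphi(u),\varphi(v)$, namely $f^k_{\varphi,e}(x)=\varphi(u)\frac{\sin(k(1-x))}{\sin k}+\varphi(v)\frac{\sin(kx)}{\sin k}$ (well defined since $\sin k\neq0$). As $\varphi$ is finitely supported and $f^k_\varphi$ is continuous at the vertices, $f^k_\varphi\in\wt H^1_c(\cG)$. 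Integrating by parts on each edge, using $-(f^k_\varphi)''=k^2f^k_\varphi$, regrouping the boundary terms by vertices, and inserting $m(v)=\deg(v)$ together with $\tau_\cG=\tau_{\comb}$ (cf.\ \eqref{eq:tau_comb}), one arrives at the exact identity
\be
\gt_\cG[f^k_\varphi]-k^2\|f^k_\varphi\|^2_{L^2(\cG)}=-\frac{k}{\sin k}\Big((1-\cos k)\,\|\varphi\|^2_{\ell^2(\cV;m)}-\gt_\rh[\varphi]\Big).
\ee
This is the whole analytic engine of the proof. Throughout I may use $\lambda_0(\bH)\le\pi^2$, which is \eqref{eq:est01} with $\ell^\ast(\cG)=1$.

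For the inequality $1-\cos\sqrt{\lambda_0(\bH)}\le\lambda_0(\rh)$ I would feed $f^k_\varphi$ into the variational formula \eqref{eq:Rayleigh}. Given $\varphi\neq0$, its discrete Rayleigh quotient $\mu:=\gt_\rh[\varphi]/\|\varphi\|^2_{\ell^2(\cV;m)}$ lies in $(0,2)$ (here $\mu>0$ because $\gt_\rh[\varphi]=0$ would force $\varphi$ constant, impossible for a nonzero finitely supported function on an infinite connected graph). Choosing $k=\arccos(1-\mu)\in(0,\pi)$ makes the bracket above vanish, so $\gt_\cG[f^k_\varphi]=k^2\|f^k_\varphi\|^2$ and hence $\lambda_0(\bH)\le k^2=(\arccos(1-\mu))^2$. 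Taking the infimum over $\varphi$, so that $\mu\to\lambda_0(\rh)$, and applying the decreasing function $\cos$ yields the claim.

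For the reverse inequality I would use that, for $k<\pi$, the extension $f^k_\varphi$ \emph{minimises} $\int_\cG(|f'|^2-k^2|f|^2)$ among compactly supported $H^1$ functions with prescribed vertex values $\varphi$: writing a competitor as $f^k_\varphi+g$ with $g$ vanishing at all vertices, the cross term cancels by the equation $-(f^k_\varphi)''=k^2f^k_\varphi$, while $\int_e(|g'|^2-k^2|g|^2)\ge0$ since the lowest Dirichlet eigenvalue of a unit edge is $\pi^2>k^2$. Now take an almost minimiser $f\in\wt H^1_c(\cG)$ of \eqref{eq:Rayleigh} with $\gt_\cG[f]<k^2\|f\|^2$, where $k^2=\lambda_0(\bH)+\varepsilon<\pi^2$, and set $\varphi:=f|_\cV$. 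Here $\varphi\neq0$: if $f$ vanished at all vertices then $f\in\bigoplus_e H^1_0(e)$ and $\gt_\cG[f]\ge\pi^2\|f\|^2$, a contradiction. Combining minimality with the identity gives $\gt_\rh[\varphi]<(1-\cos k)\|\varphi\|^2_{\ell^2(\cV;m)}$, so $\lambda_0(\rh)<1-\cos k$; letting $\varepsilon\to0$ gives $\lambda_0(\rh)\le1-\cos\sqrt{\lambda_0(\bH)}$. (The borderline $\lambda_0(\bH)=\pi^2$ gives $1-\cos\sqrt{\lambda_0(\bH)}=2\ge\lambda_0(\rh)$ trivially.) The two inequalities together give the first equality in \eqref{eq:conn_vBCat}.

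For the essential spectrum I would run both arguments inside the decomposition principle \eqref{eq:Rayleighess}--\eqref{eq:lambdaesslimit} and its discrete analogue (a Persson-type theorem for $\rh$), restricting throughout to trial functions supported outside a large finite subgraph; the key observation is that if $\supp\varphi\subset\cV\setminus X$ then $f^k_\varphi$ is supported on the edges incident to $\supp\varphi$, hence away from any fixed finite subgraph once $X$ is large. I expect the \textbf{main obstacle} to be precisely this bookkeeping: matching the quantum exhaustion $\wt\cG\to\cG$ with the combinatorial exhaustion $X\to\cV$ and ensuring that the Dirichlet conditions imposed on $\partial\wt\cG$ are respected by the restricted (respectively extended) trial functions, together with recording the discrete Persson characterisation. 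The analytic content is, by contrast, elementary, being entirely carried by the exact identity and the single fact that $\pi^2$ is the lowest Dirichlet eigenvalue of a unit edge.
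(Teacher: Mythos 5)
Your proposal is correct, and in fact it supplies a proof where the paper gives none: Theorem \ref{th:below} is quoted there from the literature (\cite{vB}, \cite[Theorem 1]{cat}, \cite{ex97}, \cite[Theorem 3.18]{bgp}), so the comparison is with those classical arguments. Your central identity checks out: integrating by parts edgewise and regrouping by vertices indeed gives
$\gt_\cG[f^k_\varphi]-k^2\|f^k_\varphi\|^2_{L^2(\cG)}=\tfrac{k}{\sin k}\bigl(\gt_\rh[\varphi]-(1-\cos k)\|\varphi\|^2_{\ell^2(\cV;m)}\bigr)$,
using $m(v)=\deg(v)$ and $\gt_\rh[\varphi]=\sum_{e}|\varphi(e_o)-\varphi(e_i)|^2$ in the equilateral case, which matches your stated sign. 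Both directions are sound: the strict bounds $0<\mu<2$ for nonzero finitely supported $\varphi$ on an infinite connected graph are correct (the boundary edge of $\supp\varphi$ prevents $\mu=2$), the minimality of the energy-dependent extension follows exactly as you say from the vanishing cross term and the Dirichlet eigenvalue $\pi^2>k^2$ of a unit edge, the core property of $\wt H^1_c(\cG)$ is stated in the paper (proof of Lemma \ref{lem:est01}), and the borderline cases are handled via $\lambda_0(\rh)\le 2$ and $\lambda_0(\bH)\le\pi^2$. One step worth making explicit: membership $f^k_\varphi\in\wt H^1_0(\cG)$ requires knowing that continuous, compactly supported, edgewise-$H^1$ functions lie in the form domain; this is standard (flatten derivatives near vertices to approximate by $\dom(\bH_0)$ functions) and is the same fact the paper uses implicitly when inserting the sine trial functions in Lemma \ref{lem:est03}. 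Note also that the essential half of your argument uses $\lambda_0^{\ess}(\bH)\le\pi^2$, i.e.\ the \emph{second} inequality of \eqref{eq:est01} with $\ell^\ast_{\ess}(\cG)=1$, not just the first.

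The essential-spectrum part is a sketch rather than a proof, but the bookkeeping you flag as the main obstacle is genuinely routine, for two reasons you have essentially already identified. First, the discrete Persson characterisation is immediate here: $\rh$ is the normalized Laplacian, hence bounded with $\sigma(\rh)\subseteq[0,2]$, and cutting out a finite vertex set is a finite-rank perturbation --- the same device as in the Appendix proof of Theorem \ref{th:cheegerdiscr}. Second, the localisation of $f^k_\varphi$ on edges meeting $\supp\varphi$ matches the two exhaustions cleanly: given finite $\wt\cG$, take $X$ to be its vertex set together with all neighbours (then $f^k_\varphi$ satisfies the Dirichlet conditions of $\wt H^1_0(\cG\setminus\wt\cG)$); conversely, given $X$, take $\wt\cG$ to contain every edge incident to $X$ (then the vertex trace of any trial function vanishes on $X$). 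As for what each route buys: the cited proofs establish the stronger full spectral correspondence, $\lambda\in\sigma(\bH)$ iff $1-\cos\sqrt\lambda\in\sigma(\rh)$ away from the Dirichlet spectrum $\{(\pi n)^2\}_{n\ge1}$, via the secular equation or Krein-type resolvent formulas; your two-sided variational argument is more elementary and self-contained, at the price of yielding only the bottoms of the spectra --- which is, however, all that Theorem \ref{th:below} asserts.
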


\begin{remark}
Actually, far more than \eqref{eq:conn_vBCat} is known in the case of equilateral quantum graphs. In fact, there is a sort of unitary equivalence between equilateral quantum graphs and the corresponding combinatorial Laplacians (see \cite{pan12,pan13} and also \cite{lp}).
\end{remark}

Hence for equilateral graphs we obtain 
\begin{align*}
\lambda_0(\rh) & \le \frac{1}{2}\lambda_0(\bH), & \lambda_0^{\ess}(\rh) & \le \frac{1}{2}\lambda_0^{\ess}(\bH). 
\end{align*}
The latter together with \eqref{eq:estA} imply that for equilateral graphs the following equivalence holds true
\begin{align}\label{eq:equivA}
 &\lambda_0(\bH) >0\quad \big(\lambda_0^{\ess}(\bH)>0\big)&\quad&\Longleftrightarrow&\quad&\lambda_0(\rh)>0\quad\big(\lambda_0^{\ess}(\rh)>0\big).&
\end{align}
In fact, it was proved recently in \cite[Corollary 4.1]{ekmn} that the equivalence \eqref{eq:equivA} holds true if the metric graph $\cG$ satisfies Hypothesis \ref{hyp:01}. Unfortunately, there is no such simple connection like \eqref{eq:conn_vBCat} if $\cG$ is not equilateral.

\begin{remark}\label{rem:connA}
Spectral gap estimates for combinatorial Laplacians is an established topic with a vast literature because of their numerous applications (see \cite{al,am,cdv,dsv,dod,fie,hoo,woe} and references therein). Recently there was a considerable interest in the study of spectral bounds for discrete (unbounded) Laplacians on weighted graphs (see \cite{bkw15,km}). On the one hand, \eqref{eq:conn_vBCat} and \eqref{eq:equivA} indicate that there must be analogous estimates for quantum graphs, however, we should stress that \eqref{eq:conn_vBCat} holds only for equilateral graphs. On the other hand, these connections also indicate that spectral estimates for quantum graphs should have a combinatorial nature.  
\end{remark}

\begin{remark}
Since $\frac{4}{\pi^2}x\le 1-\cos(\sqrt{x})$ for all $ x\in [0,{\pi^2}/{4}]$, 
 \eqref{eq:conn_vBCat} implies the following estimate for equilateral quantum graphs
\begin{align*}
 \lambda_0(\bH) & \le \frac{\pi^2}{4}\lambda_0(\rh), & \lambda_0^{\ess}(\bH) & \le \frac{\pi^2}{4}\lambda_0^{\ess}(\rh),
\end{align*}
which improves \eqref{eq:estA}.
Moreover, the constant $\pi^2/4$ is sharp in the equilateral case. However, it remains unclear to us how sharp is the estimate \eqref{eq:estA}. 
\end{remark}

\section{The Cheeger-type bound}\label{ss:III.02}

For every $\wt\cG\in\cK_\cG$ we define {\em the boundary of $\wt\cG$ with respect to the graph $\cG$} as the set of all vertices $v\in \wt\cV\setminus\cV_N$ such that either $\deg_{\wt\cG}(v)=1$ or $\deg_{\wt\cG}(v)<\deg_{\cG}(v)$, that is,
\be\label{eq:G_G}
\partial_\cG \wt{\cG} 
:= \big\{v\in \tilde{\cV} |\ v \in \cV_D \ \text{or} \ \deg_{\wt\cG}(v)<\deg_{\cG}(v)\big\}.
\ee
For a given finite subgraph $\wt\cG\subset \cG$ we then set 
\be\label{eq:deg_wtG}
\deg(\partial_\cG \wt{\cG}) := \sum_{v\in\partial_\cG \wt{\cG}} \deg_{\wt\cG}(v).
\ee

\begin{remark}
Let us stress that our definition of a boundary is different from the combinatorial one. In particular, we define the boundary as the set of vertices whereas the combinatorial definition counts the number of edges connecting $\wt\cV$ with its complement $\cV\setminus\wt\cV$.
\end{remark}

\begin{definition}\label{def:iso}
\begin{itemize}
\item[(i)] {\em The isoperimetric (or Cheeger) constant} of a metric graph $\cG$ is defined by
\be\label{eq:alphaG}
		\alpha(\cG) := \inf_{\wt{\cG} \in \cK_\cG} \; \frac{ \deg(\partial_\cG \wt{\cG}) }{\mes(\wt{\cG})} \in [0, \infty),
\ee
where $\mes(\wt\cG)$ denotes the Lebesgue measure of $\wt\cG$, 
$ \mes(\wt\cG) := \sum_{e\in\wt\cE} |e|$.
\item[(ii)] {\em The isoperimetric constant at infinity} is defined by
\be\label{eq:alphaessG}
		\alpha_{\ess}(\cG) := \sup_{\wt\cG \in \cK_\cG} \alpha(\cG\setminus \wt\cG) \in [0,\infty].
\ee
\end{itemize}
\end{definition}

Recall that for any $\wt\cG \in \cK_\cG$ we consider $ \cG\setminus \wt\cG$ with the following boundary conditions: for vertices in  $\cG\setminus \wt\cG$ having one or more edges in $ \wt\cG$, we change the boundary conditions from Kirchhoff--Neumann to Dirichlet; for all other vertices in $\cG \setminus  \wt\cG$, we leave them the same. These boundary conditions imply that for a subgraph $\cY \in \cK_{\cG \setminus \wt\cG}$,
\be\label{eq:boundary_subG}
\partial_{\cG \setminus \wt\cG} {\cY} = \partial_{\cG} {\cY},
\ee
where the left-hand side is the boundary of $\cY$ with respect to $\cG \setminus  \wt\cG$ (with the new Dirichlet conditions) and the right-hand side is the boundary with respect to the original graph $\cG$. Hence,
\[
		\alpha(\cG \setminus  \wt\cG ) = \inf_{\cY \in \cK_{ \cG \setminus  \wt\cG} }\; \frac{ \deg (\partial_{ \cG \setminus  \wt\cG} \cY  ) }{\mes( \cY )} = \inf_{\cY \in \cK_{ \cG \setminus  \wt\cG} }\; \frac{ \deg (\partial_{ \cG } \cY  ) }{\mes( \cY )}
\]
and $\alpha(\cG \setminus \cG_1 ) \leq \alpha(\cG \setminus  \cG_2 )$ whenever $ \cG_1 \subseteq  \cG_2$. Thus,
\begin{equation}\label{eq:alphaesslimit}
  \alpha_{\ess}(\cG) =  \sup_{ \wt\cG \in \cK_\cG} \alpha(\cG\setminus  \wt\cG) = \lim_{ \wt\cG \in \cK_\cG}  \alpha(\cG \setminus  \wt\cG).
\end{equation}

\begin{remark}
Choosing $\wt\cG$ as an edge $e\in \cE$ or a star $\cE_v$ with some $v\in\cV$, one gets the following simple bounds on the isoperimetric constant
\begin{align}\label{eq:alphaGest}
\alpha(\cG) & \le \frac{2}{\ell^\ast(\cG)}, & \qquad \alpha(\cG) & \le \inf_{v\in\cV} \frac{\deg_\cG(v)}{m(v)}.
\end{align}
\end{remark}

The next result is the analog of the famous Cheeger estimate for Laplacians on manifolds \cite{che}.

\begin{theorem}\label{th:Cheeger}
\begin{align}\label{eq:Cheeger}
 \lambda_0(\bH) & \ge \frac{1}{4}\alpha(\cG)^2, & \lambda_0^{\ess}(\bH) & \ge \frac{1}{4}\alpha_{\ess}(\cG)^2. 
\end{align}
\end{theorem}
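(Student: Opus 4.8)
The plan is to run the classical Cheeger argument---coarea formula together with Cauchy--Schwarz---directly at the level of the Rayleigh quotient \eqref{eq:Rayleigh}, with the only genuinely new ingredient being the passage from superlevel sets (arbitrary open subsets of $\cG$) to combinatorial subgraphs, which is exactly the content of Lemma~\ref{lem:chopensets}. First I would reduce to real, nonnegative, compactly supported test functions: since $\wt{H}^1_c(\cG):=\wt{H}^1_0(\cG)\cap L^2_c(\cG)$ is a core for $\gt_\cG$ and $\||f|'\|_{L^2}\le\|f'\|_{L^2}$, it suffices to establish
\[
\|f'\|^2_{L^2(\cG)}\ \ge\ \tfrac14\,\alpha(\cG)^2\,\|f\|^2_{L^2(\cG)}
\]
for every $0\le f\in\wt{H}^1_c(\cG)$. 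Granting this, the essential version is automatic: applying the bound on each truncated graph $\cG\setminus\wt\cG$ with the Dirichlet conditions of \eqref{eq:Rayleighess}, and invoking \eqref{eq:boundary_subG} so that $\alpha(\cG\setminus\wt\cG)$ is computed with the correct boundary, the Persson-type formula \eqref{eq:alphaesslimit} yields $\lambda_0^{\ess}(\bH)\ge\frac14\sup_{\wt\cG\in\cK_\cG}\alpha(\cG\setminus\wt\cG)^2=\frac14\alpha_{\ess}(\cG)^2$.

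For the main inequality I would set $g:=f^2$, a nonnegative compactly supported $W^{1,1}$ function on $\cG$, and consider its superlevel sets $\Omega_t:=\{x\in\cG:\ g(x)>t\}$. Two integral identities drive the estimate. On the one hand, the one-dimensional coarea (Banach indicatrix) formula applied edge by edge gives
\[
\int_\cG|g'(x)|\,dx=\int_0^\infty \#\big(g^{-1}(t)\big)\,dt,
\]
while $|g'|=2|f|\,|f'|$ and Cauchy--Schwarz give $\int_\cG|g'|\le 2\,\|f\|_{L^2}\,\|f'\|_{L^2}$. On the other hand, the layer-cake identity gives $\int_0^\infty\mes(\Omega_t)\,dt=\int_\cG g\,dx=\|f\|^2_{L^2}$. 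Thus everything reduces to the pointwise-in-$t$ isoperimetric estimate $\#\partial\Omega_t\ge\alpha(\cG)\,\mes(\Omega_t)$ valid for a.e.\ $t$ (for which $\partial\Omega_t\subseteq g^{-1}(t)$); integrating this against $dt$ and combining the two displays yields $2\,\|f\|_{L^2}\|f'\|_{L^2}\ge\alpha(\cG)\,\|f\|^2_{L^2}$, which is the claim after dividing by $\|f\|_{L^2}$.

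The step I expect to be the real obstacle is precisely this isoperimetric bound, because $\Omega_t$ is an \emph{open} subset whose boundary $\partial\Omega_t$ may sit in the interiors of edges, whereas the combinatorial constant $\alpha(\cG)$ of Definition~\ref{def:iso} only measures ratios $\deg(\partial_\cG\wt\cG)/\mes(\wt\cG)$ over subgraphs $\wt\cG\in\cK_\cG$, whose boundaries live at vertices. This is exactly the gap bridged by Lemma~\ref{lem:chopensets}: one ``chops'' $\Omega_t$ by sliding each interior boundary point along its edge toward a vertex and discarding or completing the resulting edge fragments, and one checks that this operation never increases the ratio $\#\partial\Omega/\mes(\Omega)$, eventually producing a genuine subgraph for which the ratio is bounded below by $\alpha(\cG)$. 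The delicate bookkeeping is that the number of level-set points must dominate $\deg_{\wt\cG}$ summed over the resulting boundary vertices, together with the correct treatment of the Dirichlet vertices $\cV_D$ (where $f$ vanishes, so they contribute to $\partial_\cG$ without a corresponding level crossing). Once Lemma~\ref{lem:chopensets} is in hand, the coarea, layer-cake and Cauchy--Schwarz manipulations mirror Cheeger's manifold argument verbatim, so the novelty is concentrated entirely in this combinatorial reduction of the isoperimetric constant.
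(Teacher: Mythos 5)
Your proposal is correct and follows essentially the same route as the paper's proof: reduction to real compactly supported test functions, the layer-cake identity for $\|f\|_{L^2}^2$, the co-area formula applied to $h=f^2$ combined with Cauchy--Schwarz, Lemma~\ref{lem:chopensets} to pass from superlevel sets to the combinatorial constant $\alpha(\cG)$ (with the a.e.-$t$ caveat handled by countability of $f^2(\cV)$), and the Persson-type formula \eqref{eq:Rayleighess} together with \eqref{eq:boundary_subG} and \eqref{eq:alphaesslimit} for the essential spectrum. The only deviations are cosmetic, e.g.\ your citation of \eqref{eq:alphaesslimit} where the Persson formula is \eqref{eq:Rayleighess}--\eqref{eq:lambdaesslimit}, and your sketched ``sliding'' proof of Lemma~\ref{lem:chopensets}, which the paper instead proves by building the subgraph of edges meeting $U$ and counting boundary points.
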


As an immediate corollary we get the following result.

\begin{corollary}\label{cor:3.5}
\begin{itemize}
\item[(i)] $\bH$ is uniformly positive whenever $\alpha(\cG)>0$.
\item[(ii)]  $\lambda_0^{\ess}(\bH)>0$  if $\alpha_{\ess}(\cG)>0$. 
\item[(iii)] The spectrum of $\bH$ is purely discrete if  $\alpha_{\ess}(\cG)=\infty$.
\item[(iv)] If the metric graph $\cG$ has finite total volume, $\mes(\cG)<\infty$, then $\bH$ is a uniformly positive operator with purely discrete spectrum.
\end{itemize}
\end{corollary}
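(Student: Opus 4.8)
The plan is to read off all four assertions directly from the Cheeger-type estimate \eqref{eq:Cheeger} of Theorem~\ref{th:Cheeger}, supplemented by the Persson-type fact recalled in the introduction that $\lambda_0^{\ess}(\bH)=+\infty$ holds precisely when $\sigma(\bH)$ is purely discrete. Parts (i)--(iii) are then immediate substitutions: if $\alpha(\cG)>0$ then $\lambda_0(\bH)\ge\frac14\alpha(\cG)^2>0$, i.e.\ $\bH$ is uniformly positive; if $\alpha_{\ess}(\cG)>0$ then $\lambda_0^{\ess}(\bH)\ge\frac14\alpha_{\ess}(\cG)^2>0$; and if $\alpha_{\ess}(\cG)=\infty$ then $\lambda_0^{\ess}(\bH)=+\infty$, so $\sigma_{\ess}(\bH)=\varnothing$ and the spectrum is purely discrete.

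The only part requiring an argument is (iv), and the plan is to reduce it to (i) and (iii) by showing that finite total volume forces both $\alpha(\cG)>0$ and $\alpha_{\ess}(\cG)=\infty$. First I would establish the elementary lower bound $\deg(\partial_\cG\cY)\ge 1$, valid for every finite connected subgraph $\cY$ carrying at least one edge. Such a $\cY$ has at least two vertices and, being connected, satisfies $\deg_\cY(v)\ge 1$ for all its vertices; moreover, since $\cG$ is infinite and connected while $\cY$ is a proper finite subgraph, connectivity of $\cG$ produces a vertex $v\in\cY$ incident in $\cG$ to an edge outside $\cY$, so that $\deg_\cY(v)<\deg_\cG(v)$ and hence $v\in\partial_\cG\cY$. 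Consequently $\deg(\partial_\cG\cY)\ge\deg_\cY(v)\ge 1$.

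Granting this bound, I would argue as follows. For any $\wt\cG\in\cK_\cG$ one has $\deg(\partial_\cG\wt\cG)/\mes(\wt\cG)\ge 1/\mes(\wt\cG)\ge 1/\mes(\cG)$, whence $\alpha(\cG)\ge 1/\mes(\cG)>0$ and uniform positivity follows from (i). For the essential part, I would apply the same bound on $\cG\setminus\wt\cG$, using the identity $\partial_{\cG\setminus\wt\cG}\cY=\partial_\cG\cY$ from \eqref{eq:boundary_subG} together with $\mes(\cY)\le\mes(\cG\setminus\wt\cG)$ for every $\cY\in\cK_{\cG\setminus\wt\cG}$, to obtain $\alpha(\cG\setminus\wt\cG)\ge 1/\mes(\cG\setminus\wt\cG)$. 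Since $\mes(\cG)<\infty$, the tail measure $\mes(\cG\setminus\wt\cG)=\mes(\cG)-\mes(\wt\cG)$ tends to $0$ as $\wt\cG\to\cG$ in the net $\cK_\cG$, so \eqref{eq:alphaesslimit} gives $\alpha_{\ess}(\cG)=\lim_{\wt\cG}\alpha(\cG\setminus\wt\cG)=\infty$, and (iii) then yields purely discrete spectrum.

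I expect the only delicate point to be the combinatorial bookkeeping in (iv): verifying that $\deg(\partial_\cG\cY)\ge 1$ holds uniformly (which genuinely uses that $\cG$ is infinite and connected and that $\cY$ contains an edge, ruling out isolated-vertex degeneracies) and confirming that the infimum defining $\alpha(\cG\setminus\wt\cG)$ ranges only over subgraphs of measure at most $\mes(\cG\setminus\wt\cG)$. Everything else is a direct insertion into \eqref{eq:Cheeger}. As a byproduct, uniform positivity in (iv) gives $0\notin\sigma(\bH)$, which retroactively justifies the claim in Remark~\ref{rem:finitevol} that $\id\notin H^1_0(\cG)$ and that $0$ is never an eigenvalue when $\mes(\cG)<\infty$.
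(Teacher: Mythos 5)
Your proposal is correct and takes essentially the same route as the paper: (i)--(iii) are read off directly from Theorem \ref{th:Cheeger}, and (iv) reduces to the bound $\alpha(\cG)\ge 1/\mes(\cG)$ (the paper's \eqref{eq:mGfinite}), applied also to $\cG\setminus\wt\cG$ so that the net property of $\cK_\cG$ forces $\alpha_{\ess}(\cG)=\infty$. The only difference is that you make explicit the step $\deg(\partial_\cG\wt\cG)\ge 1$ for every finite connected subgraph with an edge, which the paper treats as immediate from \eqref{eq:alphaG} because $\cG$ is infinite and connected.
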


\begin{proof}
Clearly, we only need to prove (iv). Since $\mes(\cG)<\infty$ and taking \eqref{eq:alphaG} into account, we immediately obtain 
\begin{align}\label{eq:mGfinite}
\alpha(\cG)  \ge \frac{1}{\mes(\cG)}, 
\end{align}
which together with \eqref{eq:Cheeger} implies the inequality $\lambda_0(\bH)>0$. Next, using \eqref{eq:alphaessG} together with the estimate \eqref{eq:mGfinite} and the net property of $\cK_\cG$, one gets $\alpha_{\ess}(\cG) =\infty$, which finishes the proof.
\end{proof}

Before proving the estimates \eqref{eq:Cheeger} we need several preliminary lemmas. In what follows, for every $U\subseteq \cG$, we shall denote by $\partial U$ the boundary of a set $U$ in the sense of the natural metric topology on $\cG$ (see Section \ref{ss:II.01}). 
For every measurable function $h\colon\cG\to \R$ and every $t\in\R$ let us define the set 
\be\label{eq:Omega_h}
	\Omega_h(t):= \{ x \in \cG| \; h(x)>t\}.
\ee	
The next statement is known as the {\em co-area formula} and we give its proof for the sake of completeness.

\begin{lemma}\label{lem:coareaqg}
		If $h \colon \cG \to \R$ is continuous on $\cG$ and continuously differentiable on every edge $e\in\cE$, then
		\begin{equation}\label{eq:coarea}
			\int_{\cG} |h'(x)| \; dx = \int_\R \#( \partial  \Omega_h(t) ) \; dt.
		\end{equation}
\end{lemma}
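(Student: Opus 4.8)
The plan is to prove the co-area formula

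$$\int_{\cG} |h'(x)| \; dx = \int_\R \#( \partial \Omega_h(t) ) \; dt$$

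by reducing it to the single-edge case and then summing. Since $\cG = \bigcup_{e\in\cE} e$ with the edges meeting only at vertices (a measure-zero set for the one-dimensional Lebesgue measure), the left-hand side decomposes as $\int_{\cG}|h'|\,dx = \sum_{e\in\cE}\int_e |h_e'(x)|\,dx$, where $h_e := h\upharpoonright e$. So the core of the argument is the classical one-dimensional statement: for a continuously differentiable function $g$ on a compact interval $[0,L]$, one has $\int_0^L |g'(x)|\,dx = \int_\R \#\{x\in(0,L) : g(x)=t\}\,dt$. This is precisely the Banach indicatrix theorem (the change-of-variables/area formula in dimension one), which I would cite or prove by noting that for a $C^1$ function the number of preimages $\#\,g^{-1}(t)$ is measurable in $t$, and that integrating $|g'|$ counts, with multiplicity, how many times each level $t$ is traversed.

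**Second, I would carefully relate the single-edge level counts to the boundary $\partial \Omega_h(t)$.** The set $\Omega_h(t) = \{x\in\cG : h(x)>t\}$ is an open subset of the metric graph, and since $h$ is continuous on all of $\cG$, a point $x$ lies in $\partial \Omega_h(t)$ exactly when $h(x)=t$ and $x$ is not in the interior of $\{h>t\}$ nor of $\{h\le t\}$. On the interior of a single edge, $\partial \Omega_h(t)$ consists precisely of those points $x$ where $h_e(x)=t$ and $h_e$ is not locally $>t$ on both sides — but for the integrated identity this subtlety washes out: by Sard-type reasoning, for Lebesgue-almost every $t$ the level set $\{h_e = t\}$ on the open edge is finite, consists of points where $h_e'\neq 0$, and every such point is genuinely a boundary point of $\Omega_h(t)$. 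Thus, for a.e. $t$, the edge-interior contribution to $\#(\partial\Omega_h(t))$ equals $\#\{x\in \mathrm{int}(e) : h_e(x)=t\}$, and the single-edge Banach indicatrix identity gives $\int_e |h_e'|\,dx = \int_\R \#\{x\in\mathrm{int}(e) : h_e(x)=t\}\,dt$. Summing over $e\in\cE$ and invoking the monotone convergence theorem (all integrands are nonnegative) yields the stated formula, since the vertices form a countable set that contributes nothing to either side for a.e. $t$.

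**The main obstacle I anticipate** is the bookkeeping at the vertices, i.e. verifying that the countably many vertices do not corrupt the count $\#(\partial\Omega_h(t))$ relative to the sum of per-edge interior counts. A vertex $v$ could belong to $\partial\Omega_h(t)$ (when $h(v)=t$), but the set of such ``critical values'' $t$ coming from vertices is at most countable (one value $h(v)$ per vertex), hence Lebesgue-null, so it does not affect $\int_\R \#(\partial\Omega_h(t))\,dt$. Conversely I must check that no interior-edge level point is double-counted or lost when passing from the per-edge sets to the global boundary; this is where continuity of $h$ across vertices is essential, as it guarantees the decomposition of $\{h>t\}$ is consistent across the gluing. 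A secondary technical point is measurability of $t\mapsto \#(\partial\Omega_h(t))$ as a function of $t$, which follows edge-by-edge from the Banach indicatrix theorem and is preserved under the countable sum.

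**In summary,** the skeleton is: (1) split $\cG$ into edges; (2) apply the one-dimensional Banach indicatrix formula on each edge; (3) sum via monotone convergence; (4) discard the Lebesgue-null set of vertex-critical values to identify the summed interior counts with $\#(\partial\Omega_h(t))$ for a.e. $t$. The only genuine analytic input is the one-variable change-of-variables identity; everything else is the verification that the graph's combinatorial gluing is invisible at the level of integration.
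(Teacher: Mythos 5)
Your proposal is correct and follows essentially the same route as the paper: decompose the integral over edges, establish the one-dimensional level-counting identity on each edge, use Sard's theorem to discard the null set of critical values where level points need not be boundary points of $\Omega_h(t)$, and use countability of $h(\cV)$ to discard vertex contributions. The only cosmetic difference is that you cite the classical Banach indicatrix theorem for the per-edge identity, whereas the paper proves it directly by decomposing $\{x \in e \colon h'(x) \neq 0\}$ into intervals of strict monotonicity and computing $\int_{I_n}|h'|\,dx = \mes(h(I_n))$ -- the same argument in expanded form.
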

		
\begin{proof}
		Assume first that ${\rm supp}(h)\subset e$ for some $e\in \cE$. We can identify $e$ with the open interval $(0, |e|)$ and hence 
\[
	M_e := \{x \in e| \; h'(x) \neq0\}
\] 
can be written as $M_e = \bigcup_{n} I_n$ for (at most countably many) disjoint open intervals $I_n \subseteq (0, |e|)$. Since $h$ is strictly monotone on each of these intervals,
		\begin{align*}
		\int_\cG |h'(x)| \; dx&=	\int_e |h'(x)| \; dx =\int_{M_e} |h'(x)| \; dx \\
		&= \sum_{n} \int_{I_n} |h'(x)|\; dx =  \sum_{n} \mes(h(I_n)) =  \sum_{n} \int_\R \id_{h(I_n)}(s) \; ds.
		\end{align*}
Here $\mes(X)$ denotes the Lebesgue measure of $X\subseteq \R$. Moreover, by continuity of $h$, it is straightforward to check that $ \id_{h(I_n)}(t)  =  \#( \partial \Omega_h(t) \cap I_n)$ for all $t\in \R$.  
Hence we end up with
		\begin{equation*}
\sum_{n} \int_\R \id_{h(I_n)}(t) \; dt = \sum_{n} \int_\R \#( \partial \Omega_h(t) \cap I_n) \; dt 
		= \int_\R \#( \partial \Omega_h(t) \cap M_e) \; dt.
		\end{equation*}
		
		Now assume that $t \in \R$ is such that $\partial \Omega_h(t) \cap M_e^c \neq \varnothing$, where 
\[
M_e^c:=e\setminus M_e = \{x \in e| \; h'(x) = 0\}
\] 
is the set of critical points of $h$. 
 By Sard's Theorem \cite{Sard}, $h(M_e^c)$ has Lebesgue measure zero and hence
 \[
 \int_\R \#( \partial \Omega_h(t) \cap M_e) \; dt = \int_\R \#( \partial \Omega_h(t) \cap e) \; dt.
 \]

Assume now that $h\colon \cG\to \R$ is an arbitrary function satisfying the assumptions. Then we get
		\begin{align*}
		\int_{\cG}  |h'(x)| \; dx &= \sum_{e \in \cE} \int_e |h'(x)| \; dx \\
		&=  \sum_{e \in \cE} \int_\R \#( \partial \Omega_h(t) \cap e) \; dt 
		= \int_\R \#( \partial \Omega_h(t) \cap (\cG \backslash \cV)) \; dt.
		\end{align*}
		If $\partial \Omega_h(t) \cap \cV\neq \varnothing$, then $t \in h(\cV)$. Since $\cV$ is countable, we arrive at \eqref{eq:coarea}.
\end{proof}

Next it will turn out useful to rewrite the Cheeger constant \eqref{eq:alphaG} in the following way. Let 
	\begin{equation}\label{eq:alphaG02}
		\wt{\alpha}(\cG) := \inf_{U \in \cU_\cG} \frac{\#(\partial U)}{\mes (U)},
	\end{equation}
where $\cU_\cG = \cup_{\wt\cG\in \cK_\cG} \cU_{\wt \cG}$ and
	\begin{equation}\label{eq:setU}
		\cU_{\wt{\cG}} = \{ U \subseteq \wt{\cG} | \; U \text{ is open}, \;   U \cap \cV_D = \varnothing \text{ and }  \partial U \cap \cV= \varnothing \}.
	\end{equation}

\begin{lemma} \label{lem:chopensets}
Let $\alpha(\cG)$ be defined by \eqref{eq:alphaG}. Then 
\be\label{eq:A=A}
\alpha(\cG) = \wt\alpha(\cG).
\ee
\end{lemma}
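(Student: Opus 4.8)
The plan is to prove the two inequalities $\wt\alpha(\cG)\le\alpha(\cG)$ and $\alpha(\cG)\le\wt\alpha(\cG)$ separately, in each case turning an admissible competitor for one infimum into a competitor for the other with no larger quotient. Throughout I may assume the quotients in play are finite, so that every $U\in\cU_\cG$ under consideration has $\#(\partial U)<\infty$ and $\mes(U)>0$.

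For $\wt\alpha(\cG)\le\alpha(\cG)$ I would start from a finite connected subgraph $\wt\cG\in\cK_\cG$ and manufacture an open set by slightly shrinking $\wt\cG$ near its boundary. Fix $\delta>0$ smaller than half the length of every edge of $\wt\cG$, and let $U_\delta$ be the topological interior of $\wt\cG$ in $\cG$ with the closed $\delta$-balls (in the path metric $\varrho_0$) around every vertex of $\partial_\cG\wt\cG$ removed. Then $U_\delta$ is open, it misses $\cV_D$ (all Dirichlet vertices lie in $\partial_\cG\wt\cG$ and are deleted), and its topological boundary consists precisely of the cut points at distance $\delta$ from the boundary vertices along the edges of $\wt\cG$; in particular $\partial U_\delta\cap\cV=\varnothing$, so $U_\delta\in\cU_{\wt\cG}$. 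Each boundary vertex $v$ contributes exactly one cut point on each of its $\deg_{\wt\cG}(v)$ incident edges of $\wt\cG$, and distinct incidences give distinct cut points, so $\#(\partial U_\delta)=\deg(\partial_\cG\wt\cG)$, while $\mes(U_\delta)=\mes(\wt\cG)-\delta\,\deg(\partial_\cG\wt\cG)\to\mes(\wt\cG)$ as $\delta\to0^+$. Letting $\delta\to0$ yields $\wt\alpha(\cG)\le\deg(\partial_\cG\wt\cG)/\mes(\wt\cG)$, and taking the infimum over $\wt\cG\in\cK_\cG$ gives $\wt\alpha(\cG)\le\alpha(\cG)$.

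For the reverse inequality I would first reduce to connected $U$: since $\cG$ is locally connected, the connected components of $U$ are open, each is again admissible (its boundary lies in $\partial U$, hence avoids $\cV$ and $\cV_D$), and the mediant inequality $\#(\partial U)/\mes(U)\ge\min_C \#(\partial C)/\mes(C)$ shows some component does at least as well. If the connected set $U$ lies in the interior of a single edge $e$, then $\#(\partial U)=2$ and $\mes(U)<|e|$, so the elementary bound \eqref{eq:alphaGest} (taking $\wt\cG=e$) gives $\alpha(\cG)\le 2/|e|\le 2/\mes(U)=\#(\partial U)/\mes(U)$. Otherwise $U$ contains a vertex; since $\partial U\cap\cV=\varnothing$, every vertex is either \emph{inside} (in $U$ with a full neighbourhood) or \emph{outside} (with a neighbourhood disjoint from $U$). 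I would take $\cG'$ to be the subgraph consisting of all edges having at least one inside endpoint, together with all of their endpoints. Every inside vertex then keeps all of its incident edges and so is an interior vertex of $\cG'$, contributing nothing to $\partial_\cG\cG'$; hence $\partial_\cG\cG'$ consists only of outside endpoints of the ``cut'' edges joining an inside to an outside vertex. Each such cut edge carries at least one point of $\partial U$ and contributes $1$ to $\deg(\partial_\cG\cG')$ at its outside end, so $\deg(\partial_\cG\cG')\le\#(\partial U)$; meanwhile $U$ lies on the edges of $\cG'$, whence $\mes(U)\le\mes(\cG')$. Therefore $\deg(\partial_\cG\cG')/\mes(\cG')\le\#(\partial U)/\mes(U)$, and after checking that $\cG'$ is finite and connected (connectedness following from that of $U$), taking infima gives $\alpha(\cG)\le\wt\alpha(\cG)$.

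The main obstacle is choosing the right subgraph in the second step. Because $\deg(\partial_\cG\wt\cG)=\sum_{v\in\partial_\cG\wt\cG}\deg_{\wt\cG}(v)$ counts \emph{all} edges of the subgraph at a boundary vertex, the naive choice, namely the subgraph induced on the inside vertices, is fatal: a single cut edge at a high-degree inside vertex would force that vertex onto the boundary and charge its entire degree. The key point is instead to \emph{enlarge} the vertex set to include the adjacent outside vertices, so that inside vertices become interior (and free) and the combinatorial boundary degree of $\cG'$ matches the number of topological boundary points of $U$ exactly. Verifying that this enlargement does not decrease the measure while it does not increase the boundary count is precisely what forces the two infima to coincide; the component reduction and the single-edge case are then routine.
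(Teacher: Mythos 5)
Your first inequality ($\wt\alpha(\cG)\le\alpha(\cG)$ via shrinking $\wt\cG$ by $\delta$-balls at the boundary vertices) is correct and is exactly the paper's argument. The genuine gap is in the component reduction opening your second direction. The mediant inequality $\#(\partial U)/\mes(U)\ge\min_C\#(\partial C)/\mes(C)$ needs $\#(\partial U)\ge\sum_C\#(\partial C)$ alongside $\mes(U)=\sum_C\mes(C)$, and the first of these fails because distinct components of an open set $U$ can \emph{share} boundary points, which are then counted once on the left but several times on the right. Concretely, inside the interior of a single edge $e$ take $U=(a,c)\cup(c,b)$ with $c$ the midpoint of $(a,b)$ and $0<a<b<|e|$. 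This $U$ belongs to $\cU_\cG$ (it avoids $\cV$ and $\cV_D$, and $\partial U=\{a,c,b\}$ misses $\cV$), with $\#(\partial U)/\mes(U)=3/(b-a)$, while \emph{each} component has ratio $4/(b-a)$. So not only does your mediant inequality fail; no component ``does at least as well,'' and the reduction to connected $U$ collapses as stated. (For this particular $U$ the target bound $\alpha(\cG)\le\#(\partial U)/\mes(U)$ still holds, via $\alpha(\cG)\le 2/|e|<3/(b-a)$, but your argument does not deliver it for a general disconnected $U$, e.g.\ one mixing vertex-containing components with components buried in edge interiors.)

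The paper avoids this by never decomposing $U$: it forms one finite subgraph $\wt\cG$ from \emph{all} edges meeting $U$ together with their endpoints, and then counts points of $\partial U$ edge by edge --- at least one interior point of $\partial U$ on each edge joining $\partial_\cG\wt\cG$ to $\wt\cV\setminus\partial_\cG\wt\cG$, and at least two on each edge with \emph{both} endpoints in $\partial_\cG\wt\cG$ (this multiplicity-two case, which your separate single-edge argument handles in isolation, is exactly what disconnected $U$ forces in general); since the boundary points lie in edge interiors they are distinct across edges, giving $\deg(\partial_\cG\wt\cG)\le\#(\partial U)$ directly. Only afterwards does the paper split $\wt\cG$ into connected components, and at the subgraph level the mediant step is legitimate: components share no vertices, so $\deg(\partial_\cG\,\cdot\,)$ and $\mes(\cdot)$ are exactly additive. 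Your enlarged-vertex-set construction of $\cG'$ and the count $\deg(\partial_\cG\cG')\le\#(\partial U)$ are correct for connected $U$ containing a vertex, and indeed coincide there with the paper's subgraph; to repair your proof, apply that construction to all of $U$ at once, additionally including every edge whose interior meets $U$ but which has no endpoint in $U$ (both its endpoints are then ``outside,'' and it carries at least the two boundary points $\inf$ and $\sup$ of $U$ restricted to that edge, matching its contribution of at most $2$ to $\deg(\partial_\cG\cG')$), and postpone the decomposition into connected pieces to the resulting finite subgraph rather than performing it on $U$.
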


\begin{proof}
(i) It easily follows from the definition of $\wt\alpha(\cG)$ that 
\[
\wt\alpha(\cG) \le \alpha(\cG).
\]
Indeed, assume first that $\wt \cG \in \cK_\cG$ and identify $\wt \cG$ with a closed subset of the graph. For a sufficiently small 
	$\varepsilon >0$, we cut out a ball $B_\varepsilon(v)$ of radius $\varepsilon$ at each point in $v\in \partial_\cG \wt \cG$ and obtain the set
	\[ 
U := \wt \cG \backslash \bigcup_{v \in \partial_\cG \wt G } B_\varepsilon(v).
        \]
We have $U \in \cU_\cG$ and, moreover, $\partial U$ has precisely $\deg(\partial_\cG \wt{\cG})$ points. 
	In total,
	\[ 
\frac{\#(\partial U)}{\mes(U)} = \frac{\deg(\partial_\cG \wt \cG)}{\mes(\wt \cG ) - \varepsilon \deg(\partial_\cG \wt \cG)}. 
        \]
Letting $\varepsilon$ tend to zero, we obtain the desired inequality.

(ii) To prove the other inequality,  let $U \in \cU_\cG$ and $\wt \cG = (\wt \cV, \wt \cE)$ be the finite subgraph consisting of all edges $e\in\cE$ with $e \cap U \neq \varnothing$ and all vertices incident to such an edge. Clearly, $ \mes(U) \le \mes(\wt \cG)$. Also, by \eqref{eq:deg_wtG}, 
	\begin{align*}
			\deg(\partial_\cG \wt{\cG}) = \sum_{v \in \partial \wt \cG} \deg_{\wt \cG} (v) 
			= &\#\{ e \in \wt \cE| \; e \text{ connects } \partial_\cG \wt \cG \text{ and } \wt \cG\backslash \partial_\cG \wt\cG \} \\
			&+  2 \#\{ e \in \wt \cE| \; \text{ both vertices are in } \partial_\cG \wt \cG \}.
	\end{align*}
	Since $U$ is open,  every point of $\partial_\cG \wt \cG$ is not in $U$. Therefore, every edge in the subgraph $\wt\cG$ connected to a vertex in $\partial_\cG \wt \cG$ must contain at least one boundary point of $U$. If both vertices of the edge are in $\partial_\cG \wt \cG$, it must even contain at least two boundary points of $U$. Also, since $\cV \cap \partial U = \varnothing$, the boundary points lie in the strict interior of each edge and therefore cannot coincide for different edges.  Thus, $\deg(\partial_\cG \wt\cG) \leq \#(\partial U)$. 
	
	Finally, notice that $\wt\cG$ might be disconnected. If it is the case, then write $\wt \cG = \dot{\cup}_{n} \wt \cG_n$ as a disjoint, finite union of connected subgraphs $\wt \cG_n\in\cK_\cG $. Then
	\begin{equation*}
	\frac{\#(\partial U)} {\mes(U)}
	\geq 	\frac{\deg(\partial_\cG \wt \cG)}{\mes(\wt \cG)}
	= \frac{\sum_n \deg(\partial_\cG \wt \cG_n)} {\sum_n \mes(\wt \cG_n)}
	\geq \min_{n} \frac{ \deg(\partial_\cG \wt \cG_n)}{\mes(\wt \cG_n)},
	\end{equation*}
	which implies that $\wt\alpha(\cG) \ge \alpha(\cG)$.
\end{proof}

Now we are in position to prove the Cheeger-type estimates \eqref{eq:Cheeger}.

\begin{proof}[Proof of Theorem \ref{th:Cheeger}]
	Let us show that the following inequality
\be\label{eq:Rayleigh2}
\alpha(\cG)\,\|f\|_{L^2(\cG)} \le 2\|f'\|_{L^2(\cG)}
\ee
holds true for all $f\in \dom(\gt_\cG^0)=\dom(\bH_0)$. Without loss of generality we can restrict ourselves to real-valued functions. 
So, suppose $f\in \dom(\bH_0)$ is real-valued. Observe that (see, e.g., \cite[Lemma I.4.1]{Garnett})
\[
\|f\|^2_{L^2(\cG)} = \int_{\cG} f(x)^2 \; dx = \int_0^\infty \mes( \Omega_{f^2}(t)) \; dt.
\]
Next we want to use Lemma \ref{lem:chopensets} with $h=f^2$. If $t >0$ is such that $\partial \Omega_{f^2}(t) \cap \cV \neq \varnothing$, then $t \in f^2(\cV)$ by continuity of $f^2$. Since $\cV$ and hence $f^2(\cV)$ are countable, we get that $\Omega_{f^2}(t) \in \cU_\cG$ for almost every $t >0$. Thus, in view of Lemma \ref{lem:chopensets}
	\be
			\alpha(\cG) \|f\|_{L^2}^2 \leq  \int_0^\infty \#(\partial \Omega_{f^2}(t) ) \; dt.
	\ee
On the other hand, applying Lemma \ref{lem:coareaqg} to $h=f^2$ and then the Cauchy--Schwarz inequality, we get
\be
\int_0^\infty \#(\partial \Omega_{f^2}(t)) dt = 2\int_{\cG}|f(x)f'(x)| dx \le 2\|f\|_{L^2(\cG)}\|f'\|_{L^2(\cG)}.
\ee
Combining the last two inequalities, we arrive at \eqref{eq:Rayleigh2}, which together with the Rayleigh quotient \eqref{eq:Rayleigh} proves the first inequality in \eqref{eq:Cheeger}. 

The proof of the second inequality in \eqref{eq:Cheeger} follows the same line of reasoning since by \eqref{eq:Rayleighess}
	\[ 
\lambda_0^{\ess}(\bH)\geq  \inf_{\substack{f\in \wt{H}^1_0(\cG\setminus\wt\cG)\\ f\neq0}}\frac{\|f'\|^2_{L^2(\cG\setminus\wt\cG)}}{\|f\|^2_{L^2(\cG\setminus\wt\cG)}},
        \]
for every finite subgraph $\wt\cG$ of $\cG$. Notice that the boundary conditions on $\cG\setminus\wt{\cG}$ are defined after \eqref{eq:alphaessG}.
\end{proof}

\begin{remark}
 The Cheeger estimate for finite quantum graphs was first proved in \cite{nic} (see also \cite[\S 6]{post09} and \cite{km}). Our result extends \cite[Theorem 3.2]{nic} to the case of infinite graphs and also provides a bound on the essential spectrum of $\bH$. However, our definition of the isoperimetric constant \eqref{eq:Cheeger} is purely combinatorial since the infimum is taken over finite connected subgraphs of $\cG$, although the definition in \cite{nic} (see also \cite{km16,post09}) is similar to \eqref{eq:alphaG02}.

Let us mention that one can obtain a similar statement for the operator $\bH^N$ that is related to the maximally defined quadratic form (see Remark \ref{rem:maxform}). 
However, one needs to take the infimum in the definition of the isoperimetric constant over all subgraphs of finite volume.
\end{remark}

Taking into account the equivalence \eqref{eq:lam0=0}, let us finish this section with the next observation. 

\begin{lemma}\label{lem:equivalence}
The following equivalence holds true
\begin{align}\label{eq:equivalence}
\alpha(\cG) =0  \qquad  \Longleftrightarrow  \qquad \alpha_{\ess}(\cG)  =0.
\end{align}
\end{lemma}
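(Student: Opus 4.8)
The plan is to prove the two implications separately; the reverse one is elementary, while the forward one requires a localization argument that pushes a minimizing sequence off a fixed finite part of the graph.

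First I would record the general bound $\alpha(\cG)\le\alpha_{\ess}(\cG)$, which already settles ``$\Leftarrow$''. For every finite $\wt\cG\in\cK_\cG$, any $\cY\in\cK_{\cG\setminus\wt\cG}$ is also a member of $\cK_\cG$, and by \eqref{eq:boundary_subG} its boundary is the same computed in $\cG\setminus\wt\cG$ or in $\cG$; hence the defining ratio of $\cY$ is at least $\alpha(\cG)$, and taking the infimum over $\cY$ gives $\alpha(\cG\setminus\wt\cG)\ge\alpha(\cG)$. Passing to the supremum over $\wt\cG$ and using \eqref{eq:alphaessG} yields $\alpha_{\ess}(\cG)\ge\alpha(\cG)$, so $\alpha_{\ess}(\cG)=0$ forces $\alpha(\cG)=0$.

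For ``$\Rightarrow$'', assume $\alpha(\cG)=0$. Since $\alpha_{\ess}(\cG)=\sup_{\cG_0\in\cK_\cG}\alpha(\cG\setminus\cG_0)$, it suffices to prove $\alpha(\cG\setminus\cG_0)=0$ for each fixed finite connected $\cG_0$. I would choose a minimizing sequence $\wt\cG_n\in\cK_\cG$ with $r_n:=\deg(\partial_\cG\wt\cG_n)/\mes(\wt\cG_n)\to 0$. Because $\cG$ is infinite and connected while each $\wt\cG_n$ is finite, the boundary is nonempty, so $\deg(\partial_\cG\wt\cG_n)\ge 1$; together with $r_n\to 0$ this forces $\mes(\wt\cG_n)\to\infty$. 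Next I would delete from $\wt\cG_n$ every edge lying in $\cG_0$ and write the remaining graph as the disjoint union of its nontrivial connected components $\cY_{n,1},\dots,\cY_{n,m_n}$, each of which is a finite connected subgraph of $\cG\setminus\cG_0$ and hence lies in $\cK_{\cG\setminus\cG_0}$. Two estimates drive the argument: for the measure, $\sum_j\mes(\cY_{n,j})\ge\mes(\wt\cG_n)-\mes(\cG_0)$; for the boundary, a vertex that is a boundary vertex of some $\cY_{n,j}$ but not of $\wt\cG_n$ must have lost an incident edge to $\cG_0$ and therefore belongs to the vertex set $\cV_0$ of $\cG_0$, so that $\sum_j\deg(\partial_\cG\cY_{n,j})\le\deg(\partial_\cG\wt\cG_n)+C_0$ with $C_0:=\sum_{v\in\cV_0}\deg_\cG(v)<\infty$ depending only on $\cG_0$. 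Since the ratio of the sums bounds the smallest ratio from above, $\min_j\frac{\deg(\partial_\cG\cY_{n,j})}{\mes(\cY_{n,j})}\le\frac{\sum_j\deg(\partial_\cG\cY_{n,j})}{\sum_j\mes(\cY_{n,j})}\le\frac{\deg(\partial_\cG\wt\cG_n)+C_0}{\mes(\wt\cG_n)-\mes(\cG_0)}$, and writing $\deg(\partial_\cG\wt\cG_n)=r_n\mes(\wt\cG_n)$ with $r_n\to 0$ and $\mes(\wt\cG_n)\to\infty$ shows the right-hand side tends to $0$. Selecting the minimizing component $\cY_n$ then produces a sequence in $\cK_{\cG\setminus\cG_0}$ whose ratio tends to $0$, giving $\alpha(\cG\setminus\cG_0)=0$.

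I expect the boundary bookkeeping — controlling $\sum_j\deg(\partial_\cG\cY_{n,j})$ by $\deg(\partial_\cG\wt\cG_n)$ up to the constant $C_0$ — to be the main obstacle, since this is where one must match the edge-counting description of $\deg(\partial_\cG\cdot)$ from \eqref{eq:deg_wtG} (and its reformulation in the proof of Lemma \ref{lem:chopensets}) against the new boundary created by excising the finite graph $\cG_0$; the remaining steps are the elementary measure bound and the passage to the limit.
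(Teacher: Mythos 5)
Your proposal is correct and follows essentially the same route as the paper: the paper argues by contradiction, taking a minimizing sequence $\{\cG_n\}$ and removing a fixed finite subgraph $\wt\cG$, controlling the boundary change via $\deg(\partial_\cG\cG_n)-\deg(\wt\cG)\le\deg(\partial_\cG(\cG_n\setminus\wt\cG))\le\deg(\partial_\cG\cG_n)+\deg(\wt\cG)$ and the measure via $\mes(\cG_n\setminus\wt\cG)=\mes(\cG_n)(1+o(1))$, which is exactly your excision mechanism with $C_0$ in the role of $\deg(\wt\cG)$. Your direct formulation is slightly more careful than the paper's (you justify $\mes(\wt\cG_n)\to\infty$ via $\deg(\partial_\cG\wt\cG_n)\ge 1$ and handle the possible disconnectedness of $\wt\cG_n\setminus\cG_0$ explicitly through the component-splitting bound, a point the paper leaves implicit), but this is a refinement of the same argument rather than a different one.
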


\begin{proof}
Clearly, we only need to prove the implication $\alpha(\cG) =0\ \Rightarrow\ \alpha_{\ess}(\cG) =0$.
Assume the converse, that is, there is an infinite graph $\cG$ satisfying Hypotheses \ref{hyp:locfin}--\ref{hyp:02} such that $\alpha(\cG) =0$ and $\alpha_{\ess}(\cG)>0$.  
 Then by \eqref{eq:alphaG}, there is a sequence $\{\cG_n\}\subset \cK_\cG$ such that
\[
\alpha(\cG) = \lim_{n\to \infty} \frac{\deg(\partial_\cG \cG_n)}{\mes(\cG_n)} = 0.
\]
On the other hand, \eqref{eq:alphaessG} implies that there is $\wt\cG\in\cK_\cG$ such that $\alpha(\cG\setminus\wt\cG) = \alpha_0>0$.
In particular, taking into account \eqref{eq:boundary_subG}, the latter is equivalent to the fact that
\[
 \frac{\deg(\partial_{\cG\setminus\wt\cG} \cY)}{\mes(\cY)} =  \frac{\deg(\partial_{\cG} \cY)}{\mes(\cY)} \ge \alpha_0>0
\]
for every finite subgraph $\cY\subset \cG\setminus\wt\cG$.

Next observe that 
\[
\lim_{n\to \infty} \frac{\deg(\partial_\cG (\cG_n\setminus\wt\cG))}{\mes(\cG_n\setminus\wt\cG)} = 0,
\]
which leads to a contradiction. 
Indeed, by construction, $\lim_{n\to \infty}\mes(\cG_n) = \infty$ and hence $\mes(\cG_n\setminus\wt\cG) = \mes(\cG_n)(1+o(1))$  as $n\to \infty$. 
It remains to note that
\[
\deg(\partial_\cG\cG_n) - \deg(\wt\cG) \le \deg(\partial_\cG (\cG_n\setminus\wt\cG)) \le \deg(\partial_\cG\cG_n) + \deg(\wt\cG).\qedhere
\]
\end{proof}

\section{Connections with discrete isoperimetric constants}\label{ss:III.03}

For every vertex set $X \subseteq \cV$, we define its boundary and interior edges by
\begin{align*}
		\cE_b(X) &= \{e \in \cE| \; e \text{ connects } X \text{ and } \cV \backslash X \},  \\
		\cE_i(X) &= \{ e \in \cE| \; \text{ all vertices incident to } e \text{ are in } X \}.
\end{align*}
Also, for a vertex set $X \subseteq \cV$ we set
\begin{equation*}
		m(X) := \sum_{v \in X} m(v),
\end{equation*}
where $m\colon\cV\to (0,\infty)$ is defined by \eqref{eq:m_def} (in fact, $m(v) = \mes(\cE_v)$ for every $v\in\cV$). {\em The (discrete) isoperimetric constant} $\alpha_d(Y)$ of $Y\subseteq \cV$ is defined by
\be\label{eq:discrCheegerY}
			\alpha_d(Y) := \inf_{ \substack{X \subseteq Y \\X \text{\ is finite}}} \; \frac{\#(\cE_b(X))}{m(X)} \in [0, \infty).
\ee
{\em The discrete isoperimetric constant} of the graph $\cG$ is then given by
\be\label{eq:discrCheeger}
			\alpha_d(\cV) := \inf_{ \substack{X \subseteq \cV \\X \text{\ is finite}}} \; \frac{\#(\cE_b(X))}{m(X)} \in [0, \infty).
\ee
Moreover, we need the {\em discrete isoperimetric constant at infinity}
\be\label{eq:discrCheegeress}
			\alpha_d^{\ess}(\cV) := \sup_{ \substack{X \subseteq \cV \\X \text{\ is finite}}} \alpha_d(\cV\setminus X) \in [0, \infty].
\ee

\begin{remark}
Our definition of the isoperimetric constants follows the one provided in Appendix~\ref{sec:app} (see Remark \ref{rem:ourLaplace}). This definition is slightly different from the one given in \cite{bkw15}, which uses the notion of an intrinsic metric on $\cV$ (cf. \cite{flw14}). In particular, the natural path metric $\varrho_0$ (cf. Section~\ref{ss:II.03}) is intrinsic in the sense of \cite{bkw15, flw14} and in certain cases (if, for example, $\cG_d$ is a tree) the corresponding definitions from \cite{bkw15} coincide with \eqref{eq:discrCheeger} and \eqref{eq:discrCheegeress}. Notice that the following Cheeger-type estimates for the discrete Laplacian \eqref{eq:tau}--\eqref{eq:h_def} (see \cite[Theorems 3.1 and 3.3]{bkw15} and Theorem \ref{th:cheegerdiscr}) hold true
\begin{align}\label{eq:Cheegerdiscr}
\lambda_0(\rh) & \ge \frac{1}{2}\alpha_d(\cV)^2, & \lambda_0^{\ess}(\rh) & \ge \frac{1}{2}\alpha^{\ess}_d(\cV)^2.
\end{align}
\end{remark}

The next result provides a connection between isoperimetric constants. 

\begin{lemma}\label{lem:Cheegerconnection}
	The isoperimetric constants \eqref{eq:alphaG} and \eqref{eq:discrCheeger} can be related by 
	\begin{align}\label{eq:connCheeger}
\frac{1}{2}\alpha(\cG) & \le \alpha_d(\cV), & \frac{2}{\alpha(\cG)} & \leq  \frac{1}{\alpha_d(\cV)} + \ell^\ast(\cG).
	 \end{align}
In particular, the isoperimetric constants at infinity \eqref{eq:alphaessG} and \eqref{eq:discrCheegeress} satisfy
	 \begin{align}\label{eq:connCheegerEss} 
\frac{1}{2}\alpha_{\ess}(\cG) & \le \alpha^{\ess}_d(\cV), & \frac{2}{\alpha_{\ess}(\cG)} & \leq  \frac{1}{\alpha_d^{\ess}(\cV)} + \ell^\ast_{\ess}(\cG).
	 \end{align}
\end{lemma}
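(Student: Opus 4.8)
The plan is to prove the two finite-volume inequalities in \eqref{eq:connCheeger} by setting up explicit correspondences between finite subgraphs $\wt\cG\in\cK_\cG$ and finite vertex sets $X\subseteq\cV$, and then to deduce \eqref{eq:connCheegerEss} by applying the finite estimates to the truncated graphs $\cG\setminus\wt\cG$ and passing to the limit via \eqref{eq:alphaesslimit} and \eqref{eq:discrCheegeress}. Throughout I would work in the Neumann case $\cV_D=\varnothing$, consistent with Section~\ref{ss:III.00}, so that $\partial_\cG\wt\cG$ consists exactly of the vertices $v\in\wt\cV$ with $\deg_{\wt\cG}(v)<\deg_\cG(v)$.

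For the bound $\tfrac12\alpha(\cG)\le\alpha_d(\cV)$, given a finite $X\subseteq\cV$ I would \emph{fatten} it to the subgraph $\wt\cG$ whose edge set is $\bigcup_{v\in X}\cE_v=\cE_i(X)\cup\cE_b(X)$ and whose vertex set is $X$ together with all endpoints of boundary edges. Then every $v\in X$ satisfies $\deg_{\wt\cG}(v)=\deg_\cG(v)$, so $\partial_\cG\wt\cG$ is contained in the set of \emph{outer} endpoints of $\cE_b(X)$; since each boundary edge has exactly one such endpoint, $\deg(\partial_\cG\wt\cG)\le\#(\cE_b(X))$. On the other hand, using $m(X)=2\mes(\cE_i(X))+\mes(\cE_b(X))$ and $\mes(\wt\cG)=\mes(\cE_i(X))+\mes(\cE_b(X))$ one gets $\mes(\wt\cG)\ge\tfrac12 m(X)$, whence the ratio of $\wt\cG$ is at most $2\#(\cE_b(X))/m(X)$. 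Taking infima gives the claim, and if $\wt\cG$ is disconnected I would split it into connected components and invoke the mediant inequality exactly as at the end of the proof of Lemma~\ref{lem:chopensets}.

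For the bound $\tfrac{2}{\alpha(\cG)}\le\tfrac{1}{\alpha_d(\cV)}+\ell^\ast(\cG)$, given a finite connected $\wt\cG$ I would instead take the interior vertices $X:=\wt\cV\setminus\partial_\cG\wt\cG$. Since every $v\in X$ has all of its $\cG$-edges inside $\wt\cE$, each boundary edge of $X$ runs from $X$ into $\partial_\cG\wt\cG$ and lies in $\wt\cE$, so $\#(\cE_b(X))\le\deg(\partial_\cG\wt\cG)$. Splitting $\wt\cE$ according to whether an edge has $0$, $1$ or $2$ endpoints in $\partial_\cG\wt\cG$, and using the identity $\deg(\partial_\cG\wt\cG)=\#\wt\cE_{X\partial}+2\#\wt\cE_{\partial\partial}$ from the proof of Lemma~\ref{lem:chopensets} together with $m(X)=2\mes(\wt\cE_{XX})+\mes(\wt\cE_{X\partial})$, leads to
\[
\mes(\wt\cG)\le\tfrac12 m(X)+\tfrac{\ell^\ast(\cG)}{2}\,\deg(\partial_\cG\wt\cG).
\]
Combining this with $m(X)\le\tfrac{1}{\alpha_d(\cV)}\deg(\partial_\cG\wt\cG)$ (from the definition of $\alpha_d$) yields $\deg(\partial_\cG\wt\cG)/\mes(\wt\cG)\ge 2\big/\!\big(\tfrac{1}{\alpha_d(\cV)}+\ell^\ast(\cG)\big)$, and taking the infimum over $\wt\cG$ finishes it. I expect this measure estimate to be the crux of the whole lemma: the lengths of the edges touching $\partial_\cG\wt\cG$ are precisely what the combinatorial constant cannot detect, and bounding them by $\ell^\ast(\cG)$ is both the reason the second inequality is only one-sided and the source of the additive $\ell^\ast$ term. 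The degenerate cases ($X=\varnothing$, or $\alpha_d(\cV)=0$ where the bound is vacuous) are easily checked directly.

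Finally, \eqref{eq:connCheegerEss} follows by applying the two finite inequalities to $\cG\setminus\wt\cG$ for $\wt\cG\in\cK_\cG$, noting that $\partial_{\cG\setminus\wt\cG}=\partial_\cG$ by \eqref{eq:boundary_subG} so the relevant combinatorial quantities match, and then letting $\wt\cG\to\cG$: by \eqref{eq:alphaesslimit} the left-hand sides converge to $\alpha_{\ess}(\cG)$, the discrete constants converge to $\alpha_d^{\ess}(\cV)$ by \eqref{eq:discrCheegeress}, and $\ell^\ast(\cG\setminus\wt\cG)=\sup_{e\notin\wt\cE}|e|$ decreases to $\ell^\ast_{\ess}(\cG)$ by \eqref{eq:est02}.
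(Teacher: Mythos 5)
Your proposal is correct and follows essentially the same route as the paper's proof: the same fattening of a finite vertex set $X$ to the subgraph with edge set $\cE_i(X)\cup\cE_b(X)$ for the first inequality, the same choice $X=\wt\cV\setminus\partial_\cG\wt\cG$ with the splitting $\wt\cE=\wt\cE_0\cup\wt\cE_1\cup\wt\cE_2$ and the identity $\deg(\partial_\cG\wt\cG)=\#(\wt\cE_1)+2\#(\wt\cE_2)$ for the second, and the same truncate-and-pass-to-the-net-limit argument via \eqref{eq:boundary_subG} and \eqref{eq:alphaesslimit} for the at-infinity versions. Your additive rearrangement $\mes(\wt\cG)\le\tfrac12 m(X)+\tfrac{\ell^\ast(\cG)}{2}\deg(\partial_\cG\wt\cG)$ is merely the paper's mediant-style computation reorganized (and it conveniently sidesteps the division by $\#(\cE_b(X))$ when that count vanishes), so the two proofs agree in substance.
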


\begin{proof}
(i)		First, let $X \subset \cV$ be finite. Let also $\wt \cG = ( \wt \cV, \wt \cE)$ be the finite subgraph of $\cG$ consisting of all edges with at least one vertex in the set $X$. Observe that 
		\[
		\wt \cE = \bigcup_{v\in X} \cE_v =  \cE_i (X) \cup \cE_b(X).
		\] 
Then
		\[ 
		m(X) = \sum_{v \in X} m(v) = 2 \sum_{e \in \cE_i(X)} |e| + \sum_{e \in \cE_b(X)} |e| \leq 2 \sum_{e \in \wt \cE} |e| = 2\, \mes(\wt \cG).
		\]
Note that for every $v \in X$, the whole star $\cE_v$ attached to it is in $\wt \cG$. Therefore, every vertex from $\partial_\cG\wt \cG$ is not in $X$. Now consider an edge $e$ in the subgraph $\wt \cG$ which is connected to a vertex $v\in \partial_\cG\wt \cG$. Then its other endpoint must be in $X$ (because of the definition of $\wt \cG$). Hence
		\begin{align*}
		\deg(\partial_\cG \wt{\cG}) &= \sum_{v\in\partial \wt{\cG}} \deg_{\wt\cG}(v) = \sum_{v\in\partial \wt{\cG}} \#\{e | \; e \text{ connects } v \text{ and } X \} \\
		&\leq \#\{e \in\wt\cE| \; e \text{ connects }  X \text{ and } \cV \backslash X \} = \#( \cE_b(X)).
		\end{align*}
		Splitting $\wt \cG$ in finitely many connected components as in the proof of Lemma \ref{lem:chopensets}, we arrive at the first inequality in \eqref{eq:connCheeger}.
		
		To prove the second inequality, assume $\wt \cG \in \cK_\cG$. Write $\wt \cE = \wt \cE_0 \cup \wt \cE_1 \cup \wt \cE_2$, where $\wt \cE_0$, $\wt \cE_1$, $\wt \cE_2$ are the sets of edges in the subgraph with, respectively, none, one, and two vertices in $\partial_\cG \wt{\cG}$. Clearly,
		\begin{align}\label{eq:IV.06}
					\deg(\partial_\cG \wt{\cG}) = \#(\wt \cE_1) + 2 \#(\wt \cE_2).
		\end{align}
		Now define the finite vertex set $X := \wt \cV \backslash \partial_\cG \wt{\cG}$. We have
		\begin{align*}
				\cE_i (X) & = \wt \cE_0, & \cE_b (X) & = \wt \cE_1.
		\end{align*}
		Thus,
		\begin{align*}
				2 \frac{\mes(\wt \cG) }{\deg(\partial_\cG \wt{\cG})} &= 2 \frac{\sum_{e\in\wt \cE_0} |e| + \sum_{e\in\wt \cE_1} |e| + \sum_{e\in\wt \cE_2} |e|} {\#(\wt \cE_1) + 2 \#(\wt \cE_2)} \\
				&= \frac{  2  \sum_{e\in\cE_i(X)} |e| + \sum_{e\in\cE_b(X)} |e| } {\#(\cE_b(X)) + 2 \#(\wt \cE_2)} + \frac{\sum_{e\in\cE_b(X)} |e| + 2 \sum_{e\in\wt \cE_2} |e| }{\#(\cE_b(X)) + 2 \#(\wt \cE_2)} \\
				&= \frac{ m(X) } {\#(\cE_b(X)) + 2 \#(\wt \cE_2)} + \frac{\sum_{e\in\cE_b(X)} |e| + 2 \sum_{e\in\wt \cE_2} |e| }{\#(\cE_b(X)) + 2 \#(\wt \cE_2)}\\
				&\leq \frac{ m(X) } {\#(\cE_b(X))} + \frac{\sum_{e\in\cE_b(X)} |e| + 2 \sum_{e\in\wt \cE_2} |e| }{\#(\cE_b(X)) + 2 \#(\wt \cE_2)} \leq  \frac{ m(X) } {\#(\cE_b(X))} + \sup_{e\in \cE} |e|.
		\end{align*}

(ii) To prove \eqref{eq:connCheegerEss}, let first $X \subseteq \cV$ be a finite and connected (in the sense that for two vertices in $X$, there always exists a path connecting them and only passing through vertices in $X$) set of vertices. Then the subgraph $\wt\cG_X \subseteq \cG$ consisting of all edges with both vertices in $X$ is finite and connected. 
Now note that for a finite vertex set $Y \subseteq \cV \setminus X$, the subgraph $\wt \cG_Y$ defined above is contained in $\cG \setminus \wt\cG_X$. Hence taking into account \eqref{eq:boundary_subG} and using the same line of reasoning as in (i), we get $\alpha( \cG \setminus \wt\cG_X) \le 2\alpha_d (\cV \setminus X)$.  
Finally, choose an increasing sequence $\{X_n\} \subseteq \cV$ of finite and connected vertex sets such that every finite vertex set $X \subseteq \cV$ is eventually contained in $X_n$. Then the corresponding sequence $\{\wt\cG_n\} \subseteq \cK_{\cG}$ of subgraphs is increasing and every finite, connected subgraph $\wt\cG\in\cK_\cG$ is eventually contained in $\wt\cG_n$. In view of \eqref{eq:alphaesslimit}, we obtain the first inequality in \eqref{eq:connCheegerEss} by taking limits. 

To prove the second, for a subgraph $\cG_0 \in  \cK_\cG$, choose $X$ to be the set of vertices in $\cG_0$. Let $\wt \cG \in \cK_{\cG \setminus \cG_0}$. If a vertex $v$ is both in $\wt \cV$ and in $X$, then it has at least one incident edge which lies in the cut out graph $\cG_0$ and therefore $v \in \partial_\cG \wt \cG$. Thus, the vertex set $Y = \wt \cV \backslash \partial_\cG \wt{\cG}$ satisfies $Y \cap X = \varnothing$. Refining the previous estimate,
\[
	2 \frac{\mes(\wt \cG) }{\deg(\partial_\cG \wt{\cG})} \leq \frac{ m(Y) } {\#(\cE_b(Y))} + \frac{\sum_{e\in\cE_b(Y)} |e| + 2 \sum_{e\in\wt \cE_2} |e| }{\#(\cE_b(Y)) + 2 \#(\wt \cE_2)} \leq  \frac{ m(Y) } {\#(\cE_b(Y))} + \ell^\ast(\cG \setminus \cG_0),
\]
and hence
\[
 \frac{2}{\alpha (\cG \setminus \cG_0)} \leq \frac{1}{\alpha_d(\cV \setminus X)} + \ell^\ast(\cG \setminus \cG_0).
\]
Choosing an increasing sequence $\{\cG_n\} \subseteq \cK_\cG$ such that every $\cG_0 \in \cK_\cG$ is eventually contained in $\cG_n$ and applying the same limit argument as before, we arrive at the second inequality in \eqref{eq:connCheegerEss}.
\end{proof}

\begin{remark}
It can be seen by examples that the estimates \eqref{eq:connCheeger} and \eqref{eq:connCheegerEss} are sharp. Indeed, on the equilateral Bethe lattice (see Example \ref{ex:tree01}), one gets equalities in the second inequalities \eqref{eq:connCheeger} and \eqref{eq:connCheegerEss} (cf. \eqref{eq:alphaBethe}). 
\end{remark}

Combining \eqref{eq:connCheeger} with Corollary \ref{cor:3.5}, we obtain Theorem 4.18 from \cite{ekmn}.

\begin{corollary}[\cite{ekmn}]\label{th:H0positive}
\begin{itemize}
\item[(i)] $\lambda_0(\bH)>0$ if $\alpha_d(\cV)>0$.
\item[(ii)]  $\lambda_0^{\ess}(\bH)>0$  if $\alpha_d^{\ess}(\cV)>0$. 
\item[(iii)] The spectrum of $\bH$ is purely discrete if the number $\#\{e\in\cE\colon |e|>\varepsilon\}$ is finite for every $\varepsilon>0$ and $\alpha_d^{\ess}(\cV)=\infty$.
\end{itemize}
\end{corollary}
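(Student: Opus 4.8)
The plan is to read off all three assertions directly from the two-sided comparison between the metric and discrete isoperimetric constants established in Lemma~\ref{lem:Cheegerconnection}, together with the spectral consequences recorded in Corollary~\ref{cor:3.5}. The key observation is that the second inequalities in \eqref{eq:connCheeger} and \eqref{eq:connCheegerEss} provide lower bounds for $\alpha(\cG)$ and $\alpha_{\ess}(\cG)$ in terms of $\alpha_d(\cV)$, $\alpha_d^{\ess}(\cV)$ and the edge lengths, and it is precisely these lower bounds that transform positivity (or infiniteness) of the discrete constants into positivity (or infiniteness) of the metric ones, at which point Corollary~\ref{cor:3.5} finishes the job.

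For part (i), I would start from the second inequality in \eqref{eq:connCheeger}, namely $\frac{2}{\alpha(\cG)} \le \frac{1}{\alpha_d(\cV)} + \ell^\ast(\cG)$. Since $\alpha_d(\cV)>0$ by assumption and $\ell^\ast(\cG)<\infty$ by Hypothesis~\ref{hyp:01}, the right-hand side is finite, so $2/\alpha(\cG)<\infty$ and hence $\alpha(\cG)>0$. Corollary~\ref{cor:3.5}(i) then gives that $\bH$ is uniformly positive, in particular $\lambda_0(\bH)>0$. Part (ii) follows by the identical argument applied to the second inequality in \eqref{eq:connCheegerEss}: using $\alpha_d^{\ess}(\cV)>0$ together with $\ell^\ast_{\ess}(\cG)\le\ell^\ast(\cG)<\infty$ one obtains $\alpha_{\ess}(\cG)>0$, whence $\lambda_0^{\ess}(\bH)>0$ by Corollary~\ref{cor:3.5}(ii).

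For part (iii), the first step is to translate the hypothesis: by Remark~\ref{rem:3.3}(ii), the finiteness of $\#\{e\in\cE\colon|e|>\varepsilon\}$ for every $\varepsilon>0$ is exactly the statement $\ell^\ast_{\ess}(\cG)=0$. Substituting $\ell^\ast_{\ess}(\cG)=0$ and $\alpha_d^{\ess}(\cV)=\infty$ into the second inequality of \eqref{eq:connCheegerEss} yields $2/\alpha_{\ess}(\cG)\le 0$, and since $\alpha_{\ess}(\cG)\in[0,\infty]$ this forces $\alpha_{\ess}(\cG)=\infty$; Corollary~\ref{cor:3.5}(iii) then gives that $\sigma(\bH)$ is purely discrete. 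The argument involves no real obstacle; the only points requiring care are the bookkeeping with the conventions $1/\infty=0$ and $2/\infty=0$ in the degenerate cases, and the observation that the finiteness of $\ell^\ast(\cG)$ coming from Hypothesis~\ref{hyp:01} is exactly what prevents the comparison bound from degenerating and is therefore indispensable in parts (i) and (ii).
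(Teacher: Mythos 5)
Your proposal is correct and follows essentially the same route as the paper: the paper likewise obtains (i) and (ii) by combining the second inequalities in \eqref{eq:connCheeger} and \eqref{eq:connCheegerEss} with Corollary \ref{cor:3.5}, and its proof of (iii) consists precisely of your two observations, namely that $\ell^\ast_{\ess}(\cG)=0$ is equivalent to the finiteness of $\#\{e\in\cE\colon |e|>\varepsilon\}$ for every $\varepsilon>0$, and that then \eqref{eq:connCheegerEss} forces $\alpha_{\ess}(\cG)=\infty$ when $\alpha_d^{\ess}(\cV)=\infty$. Your explicit bookkeeping of the degenerate conventions $1/\infty=0$ and the role of $\ell^\ast(\cG)<\infty$ from Hypothesis \ref{hyp:01} is a harmless (and welcome) elaboration of what the paper leaves implicit.
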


\begin{proof}
We only need to mention that $\ell^\ast_{\ess}(\cG) = 0$ if and only if the number $\#\{e\in\cE\colon |e|>\varepsilon\}$ is finite for every $\varepsilon>0$. Moreover, in this case it follows from \eqref{eq:connCheegerEss} that $\alpha^{\ess}(\cG) = \alpha_d^{\ess}(\cV)$.
\end{proof}

Finally, let us mention that in the case of equilateral graphs the discrete isoperimetric constants coincide with the combinatorial isoperimetric constants introduced in \cite{dk86}:
\begin{align}\label{eq:alpha_comb}
\alpha_{\comb}(\cV) & = \inf_{ \substack{X \subseteq \cV \\X \text{\ is finite}}} \frac{\#(\partial X)}{\deg(X)}, & \alpha_{\comb}^{\ess}(\cV) & = \sup_{ \substack{X \subseteq \cV \\X \text{\ is finite}}} \alpha_{\comb}(\cV\setminus X)
\end{align}
Comparing \eqref{eq:alpha_comb} with \eqref{eq:discrCheeger} and \eqref{eq:discrCheegeress} and noting that 
\[
\ell_\ast(\cG)\deg_\cG(v)\le m(v) \le \ell^\ast(\cG)\deg_\cG(v)
\]
for all $v\in\cV$, one easily derives the estimates 
\begin{align*} 
\frac{\alpha_{\comb}(\cV)}{\ell^\ast(\cG)} & \le \alpha_d(\cV) \le \frac{\alpha_{\comb}(\cV)}{\ell_\ast(\cG)}, & \frac{\alpha_{\comb}^{\ess}(\cV)}{\ell_{\ess}^\ast(\cG)} & \le \alpha_d^{\ess}(\cV) \le \frac{\alpha_{\comb}^{\ess}(\cV)}{\ell^{\ess}_\ast(\cG)}.
\end{align*}
Here
\be\label{eq:ell_*ess}
\ell_\ast^{\ess}(\cG) := \sup_{\wt\cE} \inf_{e\in\cE\setminus\wt\cE} |e|,
\ee
and the supremum is taken over all finite subsets $\wt\cE$ of $\cE$. 
Moreover, taking into account Lemma \ref{lem:Cheegerconnection}, we get the following connection between our isoperimetric constants and the combinatorial ones: 
\be\label{eq:A=Acomb01}
\frac{2\,\alpha_{\comb}(\cV)}{\ell^\ast(\cG)(1+ \alpha_{\comb}(\cV))} \le \alpha(\cG) \le \frac{2\,\alpha_{\comb}(\cV)}{\ell_\ast(\cG)}
\ee
and
\be\label{eq:A=Acomb02} 
\frac{2\,\alpha^{\ess}_{\comb}(\cV)}{\ell_{\ess}^\ast(\cG)(1+ \alpha^{\ess}_{\comb}(\cV))} \le \alpha^{\ess}(\cG) \le \frac{2\,\alpha_{\comb}^{\ess}(\cV)}{\ell^{\ess}_\ast(\cG)}.
\ee
Since $\alpha_{\comb}(\cV)\in [0,1)$, we end up with the following result.

\begin{corollary}\label{cor:alpha_comb}
Let $\cG$ be a metric graph such that $\ell^\ast(\cG)<\infty$. Then:
\begin{itemize}
\item[(i)] $\lambda_0(\bH)>0$ if $\alpha_{\comb}(\cV)>0$. 
\item[(ii)] $\lambda_0^{\ess}(\bH)>0$ whenever $\alpha_{\comb}^{\ess}(\cV)>0$.
\item[(iii)] The spectrum of $\bH$ is purely discrete if $\ell^\ast_{\ess}(\cG) = 0$ and $\alpha_{\comb}^{\ess}(\cV)>0$.
\end{itemize}
\end{corollary}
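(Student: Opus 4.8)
The plan is to deduce all three statements directly from the two-sided comparisons \eqref{eq:A=Acomb01}--\eqref{eq:A=Acomb02} between the metric isoperimetric constants $\alpha(\cG),\alpha_{\ess}(\cG)$ and the combinatorial ones, fed into the Cheeger bound of Theorem~\ref{th:Cheeger} and the consequences already packaged in Corollary~\ref{cor:3.5}. The single structural observation that makes everything work is that the standing hypothesis $\ell^\ast(\cG)<\infty$, together with the elementary inequality $\ell^\ast_{\ess}(\cG)\le\ell^\ast(\cG)$, guarantees finiteness of the denominators appearing in the lower bounds of \eqref{eq:A=Acomb01}--\eqref{eq:A=Acomb02} (recall also that $1+\alpha_{\comb}(\cV)<2$ and $1+\alpha_{\comb}^{\ess}(\cV)\le 2$ are bounded). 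Hence a strictly positive combinatorial constant transfers to a strictly positive metric constant.

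For part~(i), I would start from $\alpha_{\comb}(\cV)>0$ and invoke the left inequality in \eqref{eq:A=Acomb01}, which yields $\alpha(\cG)\ge 2\alpha_{\comb}(\cV)\big(\ell^\ast(\cG)(1+\alpha_{\comb}(\cV))\big)^{-1}>0$; here finiteness of $\ell^\ast(\cG)$ is exactly what keeps this lower bound away from zero. Substituting into the first estimate of \eqref{eq:Cheeger} gives $\lambda_0(\bH)\ge\tfrac14\alpha(\cG)^2>0$, i.e.\ uniform positivity, which is Corollary~\ref{cor:3.5}(i). Part~(ii) is verbatim the same argument with the essential constants: from $\alpha_{\comb}^{\ess}(\cV)>0$ and $\ell^\ast_{\ess}(\cG)\le\ell^\ast(\cG)<\infty$ the left inequality of \eqref{eq:A=Acomb02} produces $\alpha_{\ess}(\cG)>0$, and then the second estimate of \eqref{eq:Cheeger} gives $\lambda_0^{\ess}(\bH)\ge\tfrac14\alpha_{\ess}(\cG)^2>0$.

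For part~(iii), the additional hypothesis $\ell^\ast_{\ess}(\cG)=0$ forces the prefactor $1/\ell^\ast_{\ess}(\cG)$ in the lower bound of \eqref{eq:A=Acomb02} to blow up. Since the numerator $2\alpha_{\comb}^{\ess}(\cV)$ is a fixed positive quantity, the correct reading of the bound is $\alpha_{\ess}(\cG)=+\infty$, and Corollary~\ref{cor:3.5}(iii) then delivers purely discrete spectrum.

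The computations are entirely routine, so there is no genuine obstacle. The only point deserving a moment of care is part~(iii): one must interpret the estimate $\alpha_{\ess}(\cG)\ge 2\alpha_{\comb}^{\ess}(\cV)\big(\ell^\ast_{\ess}(\cG)(1+\alpha_{\comb}^{\ess}(\cV))\big)^{-1}$ as asserting $\alpha_{\ess}(\cG)=+\infty$ in the limiting case $\ell^\ast_{\ess}(\cG)=0$, rather than treating it as an indeterminate expression. Once this limiting reading is fixed, part~(iii) matches the hypothesis of Corollary~\ref{cor:3.5}(iii) precisely, and the proof closes.
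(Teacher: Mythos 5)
Your proof is correct and follows exactly the route the paper intends: the corollary is stated immediately after \eqref{eq:A=Acomb01}--\eqref{eq:A=Acomb02} precisely because it follows by feeding those two-sided bounds into the Cheeger estimate of Theorem~\ref{th:Cheeger} (equivalently, Corollary~\ref{cor:3.5}), just as you do. Your ``limiting reading'' in part~(iii) is legitimate because the underlying inequality $\tfrac{2}{\alpha_{\ess}(\cG)}\le \tfrac{1}{\alpha_d^{\ess}(\cV)}+\ell^\ast_{\ess}(\cG)$ from \eqref{eq:connCheegerEss} holds in $[0,\infty]$, so $\ell^\ast_{\ess}(\cG)=0$ together with $\alpha_d^{\ess}(\cV)\ge \alpha_{\comb}^{\ess}(\cV)/\ell^\ast_{\ess}(\cG)=\infty$ forces $\alpha_{\ess}(\cG)=\infty$, which is exactly the hypothesis of Corollary~\ref{cor:3.5}(iii).
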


\section{Upper bounds via the isoperimetric constant}\label{ss:III.04}

It is possible to use the isoperimetric constants to estimate $\lambda_0(\bH)$ and $\lambda_0^{\ess}(\bH)$ from above, however, for this we need to impose additional restrictions on the metric graph.

\begin{lemma}\label{lem:est03}
	Suppose that $\ell_\ast(\cG)=\inf_{e\in\cE}|e|>0$. Then 
	\begin{align}\label{eq:est03}
	\lambda_0( \bH) & \leq \frac{\pi^2}{2\, \ell_\ast(\cG)}\alpha(\cG), &  
	\lambda_0^{\ess}(\bH) \leq \frac{\pi^2}{2\, \ell_{\ast}^{ \ess} (\cG)}\alpha_{\ess}(\cG).
	\end{align}
\end{lemma}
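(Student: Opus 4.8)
I want an upper bound on $\lambda_0(\bH)$ of the form $\frac{\pi^2}{2\ell_\ast(\cG)}\alpha(\cG)$. The natural strategy is variational: produce an explicit test function $f$ supported on a finite subgraph $\wt\cG$ whose Rayleigh quotient $\|f'\|^2/\|f\|^2$ is controlled by $\deg(\partial_\cG\wt\cG)/\mes(\wt\cG)$, and then optimize over $\wt\cG$ to bring in $\alpha(\cG)$. Since $\ell_\ast(\cG)>0$ guarantees essential self-adjointness (Corollary~\ref{cor:inf}), every compactly supported piecewise-smooth function respecting the vertex conditions is a legitimate test function in $\wt H^1_0(\cG)$.

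\emph{Construction of the test function.} Given a finite connected subgraph $\wt\cG\in\cK_\cG$, I would build $f$ to equal $1$ on the interior of $\wt\cG$ and decay to $0$ as one crosses each boundary vertex $v\in\partial_\cG\wt\cG$. Concretely, on each edge $e$ of $\wt\cG$ incident to a boundary vertex, let $f$ drop from $1$ to $0$ using a half-sine profile $\cos\big(\frac{\pi}{2\ell_\ast}\,d\big)$ over a stretch of length $\ell_\ast$ measured from the boundary vertex inward (this is where $\ell_\ast>0$ is essential: there is always room of length $\ell_\ast$ to place the profile). The crucial bookkeeping is that $\int |f'|^2$ over one such sine-arc equals $\frac{\pi^2}{4\ell_\ast^2}\cdot\frac{\ell_\ast}{2}=\frac{\pi^2}{8\ell_\ast}$, and each boundary edge contributes exactly one arc, so
\[
\|f'\|^2_{L^2(\cG)} \;\le\; \deg(\partial_\cG\wt\cG)\cdot\frac{\pi^2}{8\ell_\ast},
\]
since $\deg(\partial_\cG\wt\cG)$ counts precisely the edge-ends meeting $\partial_\cG\wt\cG$. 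For the denominator, $f\equiv 1$ on the bulk of $\wt\cG$ gives $\|f\|^2_{L^2(\cG)}\ge \mes(\wt\cG)-(\text{boundary correction})$, and I would arrange the decay so that the correction is absorbed, yielding $\|f\|^2\gtrsim \frac12\mes(\wt\cG)$ (the factor that ultimately produces the $2$ in the final constant). Dividing, the Rayleigh quotient is at most $\frac{\pi^2}{4\ell_\ast}\cdot\frac{\deg(\partial_\cG\wt\cG)}{\mes(\wt\cG)}$ up to the constant adjustment.

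\emph{Optimization and essential spectrum.} Taking the infimum over $\wt\cG\in\cK_\cG$ replaces $\frac{\deg(\partial_\cG\wt\cG)}{\mes(\wt\cG)}$ by $\alpha(\cG)$ (Definition~\ref{def:iso}), giving the first inequality in \eqref{eq:est03}. For the essential-spectrum bound, I would restrict the whole construction to subgraphs lying outside an arbitrary finite $\cG_0$: test functions supported in $\cG\setminus\cG_0$ estimate the right-hand side of the Persson formula \eqref{eq:Rayleighess}, the relevant infimum-of-lengths becomes $\ell_\ast^{\ess}(\cG)$, and optimizing together with \eqref{eq:alphaesslimit} yields $\alpha_{\ess}(\cG)$.

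\emph{Main obstacle.} The delicate point is making the numerator and denominator constants match cleanly to give exactly $\frac{\pi^2}{2\ell_\ast}$. The boundary-edge accounting must be handled carefully: a boundary edge with \emph{both} endpoints in $\partial_\cG\wt\cG$ carries two decaying arcs and is counted with multiplicity $2$ in $\deg(\partial_\cG\wt\cG)$, so the per-arc energy bookkeeping is consistent, but such an edge contributes little to $\|f\|^2$, which is where a loss in the volume estimate could creep in. I expect the sharp constant to come out only after verifying that the $L^2$-mass lost on boundary arcs (each contributing $\frac{\ell_\ast}{2}$ rather than $\ell_\ast$) is exactly compensated by the factor structure, so that no suboptimal constant survives; this reconciliation of the sine-profile energy against the Cheeger ratio is the technical heart of the argument.
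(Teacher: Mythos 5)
Your overall route is in fact the same as the paper's (a test function equal to $1$ in the bulk of a finite subgraph $\wt\cG$, decaying sinusoidally to $0$ at the boundary vertices, with the numerator controlled by $\deg(\partial_\cG\wt\cG)$ and the denominator by $\mes(\wt\cG)$), but the step you yourself flag as the ``technical heart'' contains a genuine gap rather than a mere reconciliation of constants. On an edge $e\in\wt\cE_2$ with $\ell_\ast(\cG)\le |e|<2\ell_\ast(\cG)$, two decaying arcs of length $\ell_\ast(\cG)$ simply do not fit, so your construction is undefined there, and your numerator bound $\|f'\|^2\le \deg(\partial_\cG\wt\cG)\cdot\frac{\pi^2}{8\ell_\ast(\cG)}$ is not just unproven but false. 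Concretely, take $\wt\cG$ to be a single edge of length $\ell_\ast(\cG)$ with both endpoints in $\partial_\cG\wt\cG$: continuity with the zero extension forces any admissible test function to vanish at both endpoints, so its Rayleigh quotient is at least $\pi^2/\ell_\ast(\cG)^2$, whereas your intermediate bound $\frac{\pi^2}{4\ell_\ast(\cG)}\cdot\frac{\deg(\partial_\cG\wt\cG)}{\mes(\wt\cG)}=\frac{\pi^2}{2\ell_\ast(\cG)^2}$ promises a quotient half that size. The repair is to shrink the two arcs on a $\wt\cE_2$ edge to length $|e|/2$: the per-arc energy then becomes $\frac{\pi^2}{8(|e|/2)}\le\frac{\pi^2}{4\ell_\ast(\cG)}$, i.e.\ up to twice your figure, while your denominator estimate $\|f\|^2\ge\frac12\mes(\wt\cG)$ does survive (the $L^2$-mass lost on any single edge is at most $|e|/2$), and one lands exactly on the quotient bound $\frac{\pi^2}{2\ell_\ast(\cG)}\cdot\frac{\deg(\partial_\cG\wt\cG)}{\mes(\wt\cG)}$ of \eqref{eq:est03}, with no room to spare. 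There is also a small orientation slip: with $d$ the distance from the boundary vertex, the profile must be $\sin\big(\frac{\pi}{2\ell_\ast(\cG)}d\big)$ rather than $\cos\big(\frac{\pi}{2\ell_\ast(\cG)}d\big)$, since $f$ must vanish at every $v\in\partial_\cG\wt\cG$ (which, note, also takes care of the Dirichlet vertices $\cV_D$ included in the boundary by \eqref{eq:G_G}).

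For comparison, the paper sidesteps the fitting problem entirely by scaling the profile over the whole edge instead of over a fixed stretch: with $\phi(x)=\sqrt2\sin(\frac{\pi}{2}x)$ normalized so that $\|\phi\|_{L^2(0,1)}=1$, it puts $\phi(|x_e-u|/|e|)$ on $\wt\cE_1$ edges and the symmetric tent $\wt\phi$ on $\wt\cE_2$ edges, which makes the denominator \emph{exactly} $\mes(\wt\cG)$, while the numerator is bounded via the identity $\deg(\partial_\cG\wt\cG)=\#(\wt\cE_1)+2\#(\wt\cE_2)$ from \eqref{eq:IV.06} by $\frac{\|\phi'\|^2_{L^2(0,1)}}{\ell_\ast}\big(\#(\wt\cE_1)+4\#(\wt\cE_2)\big)\le\frac{2\|\phi'\|^2_{L^2(0,1)}}{\ell_\ast}\deg(\partial_\cG\wt\cG)$; the factor $2$ in the final constant enters there, not in the denominator. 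Your essential-spectrum paragraph is sound in outline once the above is fixed: working in $\cG\setminus\cG_0$, every vertex of $\wt\cG$ with an incident edge in $\cG_0$ lies in $\partial_\cG\wt\cG$, so your functions vanish there as the modified Dirichlet conditions require, and passing to the limit along the net $\cK_\cG$ using \eqref{eq:lambdaesslimit} and \eqref{eq:alphaesslimit} replaces $\ell_\ast(\cG)$ by $\ell_\ast^{\ess}(\cG)$ and $\alpha(\cG)$ by $\alpha_{\ess}(\cG)$, exactly as in the paper.
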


\begin{proof}
 To estimate $\lambda_0(\bH)$, choose any $\phi \in H^1([0,1])$ with $\phi(0) =0$, $\phi(1)=1$ and $\|\phi\|_{L^2(0,1)}=1$ and set
 \[
 \wt{\phi(x)}:= \id_{[0,1/2]}(x)\phi(2x) + \id_{(1/2,1]}(x)\phi(2-2x),\qquad x\in[0,1].
 \] 
 Assume a subgraph $\cG_0 \in \cK_\cG$ and a finite, connected subgraph $\wt \cG = (\wt \cV, \wt \cE)$ of ${\cG \setminus \cG_0}$. Then define $g \in \wt H^1_c(\cG \setminus \cG_0)$ by setting
	\begin{align*}
	g(x_e) := \begin{cases} 
	0, &\qquad e \in  \cE_{\cG\setminus \cG_0}, \; e \notin \wt \cE\\[1mm]
	1, &\qquad e \in \wt\cE_0\\[1mm]
	\phi(\frac{|x_e-u|}{|e|}),  &\qquad e=e_{u,\ti{u}}\in \wt\cE_1, u \in \partial \wt \cG \\[2mm]
	\wt\phi(\frac{|x_e- e_o|}{|e|}), &\qquad e \in \wt\cE_2 
	\end{cases}\ ,
	\end{align*}
	where $\wt \cE_0$, $\wt \cE_1$, $\wt \cE_2$ are defined as in the previous subsection and $|x_e - y|$ denotes the distance between $x_e\in e$ and some $y\in e$. If $\cG_0 \neq \varnothing$ and $v \in \cG \setminus \cG_0$ is a vertex with at least one incident edge in $\cG_0$, then either $v$ is not in $\wt \cV$ or $v$ is a boundary vertex of $\wt \cG$. In both cases, $g$ vanishes at $v$. Therefore, $g \in \wt{H}^1(\cG \setminus \cG_0)$. 	
	Next we get
	\begin{align*}
	\|g\|_{L^2(\cG \setminus \cG_0)}^2 &= \sum_{e \in \wt \cE_0} |e| + \sum_{e \in \wt \cE_1} |e| \| \phi\|^2_{L^2(0,1)} + \sum_{e \in \wt \cE_2} 2 \frac{|e|}{2} \|  \phi\|^2_{L^2(0,1)} = \mes(\wt \cG),
	\end{align*}
	and, in view of \eqref{eq:IV.06},
	\begin{align*}
	\| g' \|_{L^2(\cG  \setminus \cG_0)}^2 &= \sum_{e \in \wt \cE_1} \frac{1}{|e|} \|\phi'\|^2_{L^2(0,1)}  + \sum_{e \in \wt \cE_2}    \frac{4}{|e|} \|\phi'\|^2_{L^2(0,1)} \\
	& \leq \frac{ \|\phi' \|_{L^2(0,1)}^2}{\ell_\ast(\cG \setminus \cG_0)} (\#( \wt \cE_1 ) + 4 \#( \wt \cE_2) ) \leq \frac{2 \|\phi' \|_{L^2(0,1)}^2 }{\ell_\ast( \cG \setminus \cG_0)} \deg(\partial_\cG  \wt \cG).
	\end{align*}
	Choosing $\phi(x) = \sqrt{2}\sin(\frac{\pi}{2}x)$, we obtain the estimate
	\[
		\frac{ \| g' \|_{L^2(\cG  \setminus \cG_0)}^2 }{\|g\|_{L^2(\cG \setminus \cG_0)}^2 } \; \leq \; \frac{\pi^2}{2 \; \ell_\ast( \cG \setminus \cG_0  ) } \frac{  \deg(\partial_\cG  \wt \cG)  }{   \mes(\wt \cG)      }.
	\]
Choosing $\cG_0 = \varnothing$, \eqref{eq:Rayleigh} and \eqref{eq:Cheeger} imply the first inequality in \eqref{eq:est03}. 
	Now assume $\cG_0 \neq \varnothing$. Then
	\[
			\inf_{\substack{f\in \wt{H}^1(\cG\setminus \cG_0 )\\ f\neq0}}\frac{\|f'\|^2_{L^2(\cG \setminus \cG_0 )}}{\|f\|^2_{L^2(\cG\setminus \cG_0)}} \; \leq \; \frac{\pi^2}{2 \;  \ell_\ast( \cG \setminus \cG_0)  } \alpha(\cG \setminus \cG_0).
	\]
	Finally, using \eqref{eq:lambdaesslimit} and \eqref{eq:alphaesslimit}  we end up with  
	\[
		\lambda_0^{\ess}(\bH)  \leq \; \lim_{\cG_0 \in \cK_\cG} \frac{\pi^2}{2 \; \ell_\ast( \cG \setminus \cG_0  )} \alpha(\cG \setminus \cG_0) = \frac{\pi^2}{2 \; \ell_{\ast}^{\ess}  ( \cG )}\alpha_{\ess}(\cG).  \qedhere
	\]
\end{proof}

Combining Lemma \ref{lem:est03} with the Cheeger-type bounds \eqref{eq:Cheeger} and the estimates \eqref{eq:A=Acomb01}--\eqref{eq:A=Acomb02} and taking into account Lemma \ref{lem:equivalence}, we immediately get the following result.

\begin{corollary}\label{cor:buser}
If $\ell_\ast(\cG)>0$ and $\ell^\ast(\cG) < \infty$, then the following are equivalent:
\begin{itemize}
\item[(i)] $\lambda_0(\bH)>0$,
\item[(ii)] $\lambda_0^{\ess}(\bH)>0$,
\item[(iii)] $\alpha_{\comb}(\cG)>0$,
\item[(iv)] $\alpha^{\ess}_{\comb}(\cG)>0$.
\end{itemize}
\end{corollary}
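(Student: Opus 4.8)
The plan is to funnel all four conditions through a single pivot, the positivity of the metric isoperimetric constant $\alpha(\cG)$, and then to close the loop between $\alpha(\cG)$ and $\alpha_{\ess}(\cG)$ using the already-established Lemma \ref{lem:equivalence}. Nothing genuinely new has to be proved; the task is to assemble the chain of equivalences from the bounds collected above, so the bulk of the work is bookkeeping with the two standing hypotheses $\ell_\ast(\cG)>0$ and $\ell^\ast(\cG)<\infty$.

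First I would record the elementary ordering of the four edge-length quantities, namely that $\inf_e|e|$ cannot exceed the ``liminf'' which cannot exceed the ``limsup'' which cannot exceed $\sup_e|e|$; under the hypotheses this reads $0<\ell_\ast(\cG)\le \ell_\ast^{\ess}(\cG)\le \ell_{\ess}^\ast(\cG)\le \ell^\ast(\cG)<\infty$. The point of this observation is that every denominator appearing in the two-sided estimates \eqref{eq:A=Acomb01} and \eqref{eq:A=Acomb02} is then both finite and strictly positive, so neither bound degenerates. The right inequality in \eqref{eq:A=Acomb01} gives $\alpha(\cG)>0\Rightarrow\alpha_{\comb}(\cV)>0$, while the left inequality gives $\alpha_{\comb}(\cV)>0\Rightarrow\alpha(\cG)>0$; hence condition (iii) is equivalent to $\alpha(\cG)>0$. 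Applying \eqref{eq:A=Acomb02} in exactly the same manner shows that (iv) is equivalent to $\alpha_{\ess}(\cG)>0$.

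Next I would handle the spectral side by pairing the Cheeger lower bound of Theorem \ref{th:Cheeger} with the upper bound of Lemma \ref{lem:est03}. The estimate $\lambda_0(\bH)\ge\frac14\alpha(\cG)^2$ yields $\alpha(\cG)>0\Rightarrow\lambda_0(\bH)>0$, and the estimate $\lambda_0(\bH)\le\frac{\pi^2}{2\ell_\ast(\cG)}\alpha(\cG)$, which is meaningful precisely because $\ell_\ast(\cG)>0$, yields the reverse implication; thus (i) is equivalent to $\alpha(\cG)>0$. The essential-spectrum case is entirely parallel: $\lambda_0^{\ess}(\bH)\ge\frac14\alpha_{\ess}(\cG)^2$ together with $\lambda_0^{\ess}(\bH)\le\frac{\pi^2}{2\ell_\ast^{\ess}(\cG)}\alpha_{\ess}(\cG)$, where the second bound is nontrivial since $\ell_\ast^{\ess}(\cG)\ge\ell_\ast(\cG)>0$, shows that (ii) is equivalent to $\alpha_{\ess}(\cG)>0$.

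At this point I would have the two separate chains (i) $\Leftrightarrow \alpha(\cG)>0 \Leftrightarrow$ (iii) and (ii) $\Leftrightarrow \alpha_{\ess}(\cG)>0 \Leftrightarrow$ (iv). The final bridge is the contrapositive of Lemma \ref{lem:equivalence}, which asserts $\alpha(\cG)>0\Leftrightarrow\alpha_{\ess}(\cG)>0$; this merges the two chains and establishes that (i)--(iv) are all equivalent. I do not expect a real obstacle: every single implication is a direct invocation of a previously proved inequality. The only step deserving explicit care is the opening remark that the hypotheses force all of $\ell_\ast^{\ess}(\cG)$ and $\ell_{\ess}^\ast(\cG)$ to stay strictly between $0$ and $\infty$, since otherwise one of the cited two-sided comparisons between $\alpha(\cG)$ and $\alpha_{\comb}(\cV)$ (or their essential analogues) would collapse and the argument would break.
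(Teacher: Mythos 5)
Your proposal is correct and takes essentially the same route as the paper, which proves Corollary \ref{cor:buser} in one line by combining the Cheeger bound of Theorem \ref{th:Cheeger}, the Buser-type upper bound of Lemma \ref{lem:est03}, the two-sided comparisons \eqref{eq:A=Acomb01}--\eqref{eq:A=Acomb02}, and Lemma \ref{lem:equivalence}. Your explicit check that $0<\ell_\ast(\cG)\le\ell_\ast^{\ess}(\cG)\le\ell_{\ess}^\ast(\cG)\le\ell^\ast(\cG)<\infty$, so that none of the cited estimates degenerates, merely spells out what the paper leaves implicit.
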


\begin{remark}
A few remarks are in order:
\begin{itemize}
\item[(i)] If $\ell_\ast(\cG)=0$, then the estimate in \eqref{eq:est03} becomes trivial.
\item[(ii)]  Notice that \eqref{eq:est03} is better than \eqref{eq:est01} only if the isoperimetric constant satisfies
\[
\alpha(\cG) < \frac{2\,\ell_\ast(\cG)}{\ell^\ast(\cG)^2}. 
\]
\item[(iii)] In \cite{bus}, Buser noticed that the isoperimetric constant can be used for obtaining upper estimates on the spectral gap for Laplacians on compact Riemannian manifolds. Hence estimates of the type \eqref{eq:est03}   are often called Buser-type estimates. Let us mention that for combinatorial Laplacians a Buser-type estimate was first proved in \cite{am} (see also \cite{cdv, dsv}). For finite quantum graphs, a Buser-type bound can be found in \cite[Proposition 0.3]{km16}, which is, however, different from our estimate \eqref{eq:est03}.
\end{itemize}
\end{remark}

\section{Bounds by curvature}\label{ss:III.06}

Despite the combinatorial nature of isoperimetric constants \eqref{eq:alphaG} and \eqref{eq:alphaessG}, it is known that computation of the combinatorial isoperimetric constant \eqref{eq:alpha_comb} is an NP-hard problem (see \cite{hoo,kai,moh89} for further details). 
Our next aim is to introduce a quantity, which provides estimates for $\alpha(\cG)$ and $\alpha_{\ess}(\cG)$ and also turns out to be very useful in many situations (see Section \ref{sec:Examples}). 

Suppose now that our graph is oriented, that is, every edge is assigned a direction. For every $v\in \cV$, let $\cE_{v}^+$ and $\cE_{v}^-$ be the sets of outgoing and incoming edges, respectively. Next define the function $\rK\colon\cV\to \R\cup\{-\infty\}$ by
\be\label{eq:def_curv}
\rK\colon v\mapsto \frac{\#(\cE_v^+) - \#(\cE_v^-)}{\#(\cE_v^+)}\inf_{e\in\cE_v^+}\frac{1}{|e|}.		 
\ee
Note that $\rK$ can take both positive and negative values, and $\rK(v)= - \infty$ whenever $\#(\cE_v^+)=\emptyset$.

 \begin{lemma}\label{lem:curv}
 	Assume $\cG$ is an oriented graph such that the function $\rK$ is positive. Then the isoperimetric constant \eqref{eq:alphaG} satisfies
\be\label{eq:alpha_curv}
 \alpha(\cG) \geq \rK(\cG):=\inf_{v \in \cV} \rK(v) \ge 0 . 
\ee
 \end{lemma}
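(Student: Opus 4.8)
The plan is to establish, for every finite connected subgraph $\wt\cG=(\wt\cV,\wt\cE)\in\cK_\cG$, the pointwise inequality
\[
\deg(\partial_\cG\wt\cG)\ \ge\ \rK(\cG)\,\mes(\wt\cG),
\]
since dividing by $\mes(\wt\cG)$ and taking the infimum over $\cK_\cG$ then yields $\alpha(\cG)\ge\rK(\cG)$ directly from \eqref{eq:alphaG}. Positivity of $\rK$ already forces $\rK(\cG)\ge0$, so the bound is trivial when $\rK(\cG)=0$ and I may assume $\rK(\cG)>0$ throughout. For the bookkeeping I would write $d^\pm_\cG(v):=\#(\cE^\pm_v)$ for the out/in--degrees in $\cG$, $d^\pm_{\wt\cG}(v):=\#(\cE^\pm_v\cap\wt\cE)$ for the corresponding degrees counted inside the subgraph (so that $\deg_{\wt\cG}(v)=d^+_{\wt\cG}(v)+d^-_{\wt\cG}(v)$), and $\ell^+_v:=\max_{e\in\cE^+_v}|e|$, which is exactly $(\inf_{e\in\cE^+_v}1/|e|)^{-1}$.

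First I would bound the volume from above by the curvature. Grouping the edges of $\wt\cG$ according to their initial vertex gives
\[
\mes(\wt\cG)=\sum_{e\in\wt\cE}|e|=\sum_{v\in\wt\cV}\sum_{e\in\cE^+_v\cap\wt\cE}|e|\ \le\ \sum_{v\in\wt\cV}d^+_{\wt\cG}(v)\,\ell^+_v .
\]
The defining inequality $\rK(\cG)\le\rK(v)$ rearranges to $\ell^+_v\le \rK(\cG)^{-1}(d^+_\cG(v)-d^-_\cG(v))/d^+_\cG(v)$ (here $d^+_\cG(v)>0$ and $d^+_\cG(v)>d^-_\cG(v)$ because $\rK(v)>0$), and substituting yields
\[
\rK(\cG)\,\mes(\wt\cG)\ \le\ \sum_{v\in\wt\cV} d^+_{\wt\cG}(v)\,\frac{d^+_\cG(v)-d^-_\cG(v)}{d^+_\cG(v)} .
\]

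The second and main step is to show that this last sum is at most $\deg(\partial_\cG\wt\cG)$, and this is where the combinatorial heart of the argument lies. I would split $\wt\cV$ into boundary vertices $\partial_\cG\wt\cG$ and interior vertices. An interior vertex $v$ satisfies $\deg_{\wt\cG}(v)=\deg_\cG(v)$ and $v\notin\cV_D$, hence $d^\pm_{\wt\cG}(v)=d^\pm_\cG(v)$, so its summand collapses exactly to $d^+_\cG(v)-d^-_\cG(v)=d^+_{\wt\cG}(v)-d^-_{\wt\cG}(v)$. Using the handshake identity $\sum_{v\in\wt\cV}\big(d^+_{\wt\cG}(v)-d^-_{\wt\cG}(v)\big)=0$ (each edge of $\wt\cE$ is counted once with each sign), the interior contribution equals $-\sum_{v\in\partial_\cG\wt\cG}\big(d^+_{\wt\cG}(v)-d^-_{\wt\cG}(v)\big)$. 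For the boundary summands I would only use $0\le(d^+_\cG(v)-d^-_\cG(v))/d^+_\cG(v)\le1$, bounding each by $d^+_{\wt\cG}(v)$. The two boundary pieces then telescope as
\[
\sum_{v\in\partial_\cG\wt\cG}\big(d^-_{\wt\cG}(v)-d^+_{\wt\cG}(v)\big)+\sum_{v\in\partial_\cG\wt\cG}d^+_{\wt\cG}(v)=\sum_{v\in\partial_\cG\wt\cG}d^-_{\wt\cG}(v)\ \le\ \sum_{v\in\partial_\cG\wt\cG}\deg_{\wt\cG}(v)=\deg(\partial_\cG\wt\cG),
\]
which is the desired inequality. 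The one delicate point I would watch is precisely this interior/boundary split: the argument hinges on the fact that for non-boundary vertices the full--graph and subgraph degrees coincide, so that the crude ``$\le1$'' estimate on the curvature ratio is applied only at boundary vertices, where it is absorbed into $\deg(\partial_\cG\wt\cG)$, whereas the interior terms must be handled exactly through the handshake cancellation rather than by any inequality.
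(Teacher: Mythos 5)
Your proof is correct and follows essentially the same route as the paper: you bound each edge length via the rearranged curvature inequality, group edges by initial vertex, and then reduce the resulting sum to $\sum_{v\in\partial_\cG\wt\cG}\#(\cE_v^-(\wt\cG))\le\deg(\partial_\cG\wt\cG)$ using the handshake identity with exact cancellation at interior vertices and the crude bound on the curvature ratio at boundary vertices. The only difference is cosmetic ordering (you split interior/boundary before applying the handshake identity, the paper applies it first and then cancels interior terms), so the two arguments coincide step for step.
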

 
\begin{proof}
Let $\wt \cG\in\cK_\cG$ be a finite and connected subgraph. For every $v\in\wt\cV$, denote by $\cE_{v}^+(\wt\cG)$ and $\cE_{v}^-(\wt\cG)$ the sets of outgoing and incoming edges in $\wt\cG$. Since $\rK(v)>0$ is positive, we get
\[
\sup_{e\in \cE^+_v} |e| \le \frac{1}{\rK(v)} \Big( 1 - \frac{\#(\cE_v^-)}{\#(\cE_v^+)}\Big),
\]
for all $v\in\cV$. 
Therefore,
	\begin{align*}
		\mes(\wt \cG) = \sum_{e \in \wt \cE} |e| = & \sum_{v \in \wt \cV}\, \sum_{e \in \cE^+_v(\wt\cG)}|e| 
		\leq \frac{1}{\rK(\cG)} \sum_{v \in \wt \cV}\, \sum_{e \in \cE^+_v(\wt\cG)} 1- \frac{\#(\cE_v^-)}{\#(\cE_v^+)} \\
&	=  \frac{1}{\rK(\cG)}\sum_{v \in \wt \cV} \#(\cE_{v}^+(\wt\cG))\Big(1  -  \frac{\#(\cE_v^-)}{\#(\cE_v^+)}\Big). 
	\end{align*}
First observe that
\[
\sum_{v \in \wt \cV} \#(\cE_{v}^+(\wt\cG)) = \sum_{v \in \wt \cV} \#(\cE_{v}^-(\wt\cG)) = \#(\wt\cE).
\]	
Moreover, for any non-boundary point $v \in \wt \cV\setminus \partial_\cG\wt\cG$, the whole star $\cE_v$ is contained in $\wt \cG$ and hence $\cE_{v}^\pm(\wt\cG) = \cE_v^\pm$. Therefore, we get
\begin{align*}
\sum_{v \in \wt \cV} \#(\cE_{v}^+(\wt\cG)) \Big(1  -  \frac{\#(\cE_v^-)}{\#(\cE_v^+)}\Big) 
&= \sum_{v \in \wt \cV} \#(\cE_{v}^+(\wt\cG))  - \sum_{v \in \wt \cV} \#(\cE_{v}^+(\wt\cG)) \frac{\#(\cE_v^-)}{\#(\cE_v^+)}\\
&= \sum_{v \in \wt \cV} \#(\cE_{v}^-(\wt\cG))  - \sum_{v \in \wt \cV} \#(\cE_{v}^+(\wt\cG)) \frac{\#(\cE_v^-)}{\#(\cE_v^+)}\\
&= \sum_{v \in  \partial_{\cG}\wt\cG}\#(\cE_{v}^-(\wt\cG))  -  \#(\cE_{v}^+(\wt\cG)) \frac{\#(\cE_v^-)}{\#(\cE_v^+)} \\
&\le \sum_{v \in  \partial_{\cG} \wt{\cG} }\deg_{\wt \cG} (v) = \deg( \partial_\cG \wt \cG).
\end{align*}
Combining this with the previous estimates, we end up with the following bound
	\begin{align*}
		\mes(\wt \cG) \le \frac{1}{\rK( \cG)} \deg( \partial_\cG \wt \cG),
	\end{align*}
	which proves the claim.
	\end{proof}

\begin{remark}
The function $\rK$ is sometimes interpreted as {\em curvature}. 
Several notions of curvature have been introduced for discrete and combinatorial Laplacians. Perhaps, the closest one to \eqref{eq:def_curv} have been introduced in \cite{klw}. Namely, since the natural path metric $\varrho_0$ is intrinsic, define the function $\rK_d\colon \cV\to\R$ by
\be\label{eq:curv_discr}
\rK_d\colon v\mapsto \frac{\#(\cE^+_{v}) - \#(\cE^-_{v})}{m(v)}.
\ee   
Moreover, $m(v) = \deg(v)$ for all $v\in \cV$ if the corresponding metric graph is equilateral (i.e., $|e|\equiv 1$), and hence \eqref{eq:curv_discr} coincides with the  definition suggested for combinatorial Laplacians in \cite{dk}. Notice that for equilateral graphs \eqref{eq:def_curv} reads
\be\label{eq:curv_comb}
\rK(v) = \rK_{\rm comb}(v) := 1 - \frac{\#(\cE_v^-)}{\#(\cE_v^+)},\quad v\in\cV,
\ee
and hence in this case
\be
\frac{2}{\rK(v)} = \frac{2}{\rK_{\rm comb}(v)} = 1 + \frac{1}{\rK_d(v)},\quad v\in\cV.
\ee
It seems there is no nice connection between $\rK$ and $\rK_d$ in the general case. 
\end{remark}

\begin{remark}\label{rem:discr_estcurv}
Let us also mention that Lemma \ref{lem:curv} can be seen as the analog of \cite[Theorem 6.2]{bkw15}, where the following bound for the discrete isoperimetric constant was established:
\be\label{eq:alpha_curvdiscr}
\alpha_d(\cV) \ge \rK_d(\cV) := \inf_{v\in\cV} \rK_d(v),
\ee
{\em if $\rK_d$ is nonnegative on $\cV$.} Combining \eqref{eq:alpha_curvdiscr} with the second inequality in \eqref{eq:connCheeger}, we end up with the following bound
\begin{equation} \label{eq:bkwest}
\frac{2}{\alpha(\cG)} \le \frac{1}{\rK_d(\cV)} + \ell^\ast(\cG).
\end{equation}
\end{remark}
	
In what follows we shall call the function $\rK_{\rm comb}\colon\cV \to \Q\cup\{- \infty\}$ defined by \eqref{eq:curv_comb} as the {\em combinatorial curvature} (in \cite[p.\ 32]{dk}, $\rK_{\rm d}$ is called a {\em curvature of the combinatorial distance spheres}). Note that the curvature can take both positive and negative values, and $\rK_{\rm comb}(v)=- \infty$ whenever $\#(\cE_v^+)=\emptyset$.	The next simple estimate turns out to be very useful in applications. 
	
\begin{lemma} \label{lem:combcurv}
	Assume $\rK_{\text{comb}}$ is positive on $\cV$ and 
	\[
	\rK_{\rm comb}(\cV):= \inf_{v\in\cV}\rK_{\rm comb}(v).
	\] 
Then the isoperimetric constant \eqref{eq:alphaG} satisfies
\be\label{eq:est_curvcomb}
 \alpha(\cG) \ge  \frac{\rK_{\rm comb}(\cV)}{\ell^\ast(\cG)}. 
\ee
\end{lemma}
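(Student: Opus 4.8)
The plan is to derive this estimate directly from Lemma \ref{lem:curv} by comparing the length-dependent curvature $\rK$ with the purely combinatorial curvature $\rK_{\rm comb}$. The key algebraic observation is that the defining expression \eqref{eq:def_curv} factors as
\[
\rK(v) = \frac{\#(\cE_v^+) - \#(\cE_v^-)}{\#(\cE_v^+)}\,\inf_{e\in\cE_v^+}\frac{1}{|e|} = \rK_{\rm comb}(v)\,\inf_{e\in\cE_v^+}\frac{1}{|e|},
\]
where the first factor is precisely the combinatorial curvature from \eqref{eq:curv_comb}. So the whole task reduces to bounding the second factor uniformly from below.

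First I would invoke Hypothesis \ref{hyp:01}, which guarantees $|e|\le \ell^\ast(\cG)$ for every edge, and hence $\inf_{e\in\cE_v^+}\frac{1}{|e|}\ge \frac{1}{\ell^\ast(\cG)}$ for all $v\in\cV$. Since $\rK_{\rm comb}$ is assumed positive on $\cV$, multiplying this bound by the positive quantity $\rK_{\rm comb}(v)$ preserves the inequality, yielding
\[
\rK(v)\ge \frac{\rK_{\rm comb}(v)}{\ell^\ast(\cG)}\ge \frac{\rK_{\rm comb}(\cV)}{\ell^\ast(\cG)}
\]
for every $v\in\cV$. Taking the infimum over $v$ then gives $\rK(\cG)\ge \rK_{\rm comb}(\cV)/\ell^\ast(\cG)$.

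It remains to observe that $\rK$ is itself positive on $\cV$: this follows since $\rK_{\rm comb}(v)>0$ by hypothesis and $\inf_{e\in\cE_v^+}\frac{1}{|e|}>0$, the latter because $\ell^\ast(\cG)<\infty$ again by Hypothesis \ref{hyp:01}. Consequently the positivity assumption of Lemma \ref{lem:curv} is met, and that lemma delivers $\alpha(\cG)\ge \rK(\cG)$. Chaining this with the previous bound produces exactly \eqref{eq:est_curvcomb}.

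There is essentially no serious obstacle here, as the statement is a direct corollary of Lemma \ref{lem:curv}; the only point requiring care is the sign bookkeeping. The factorization inequality $\inf_{e}1/|e|\ge 1/\ell^\ast(\cG)$ may be multiplied into the lower bound only because it is scaled by the \emph{positive} factor $\rK_{\rm comb}(v)$, which is precisely what the positivity hypothesis secures; were $\rK_{\rm comb}(v)$ negative the inequality would reverse. One should also keep in mind that the finiteness $\ell^\ast(\cG)<\infty$ is what makes the right-hand side of \eqref{eq:est_curvcomb} a genuine positive lower bound rather than a vacuous one.
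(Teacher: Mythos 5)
Your proof is correct and follows essentially the same route as the paper: both compare \eqref{eq:curv_comb} with \eqref{eq:def_curv} to obtain the pointwise bound $\rK(v)\ge \rK_{\rm comb}(v)/\ell^\ast(\cG)$ and then invoke Lemma \ref{lem:curv}. Your additional remarks on the positivity of $\rK$ and the role of Hypothesis \ref{hyp:01} simply make explicit what the paper leaves implicit, so nothing is missing.
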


\begin{proof}
Noting that $\rK_{\rm comb}$ is positive and comparing \eqref{eq:curv_comb} with \eqref{eq:def_curv}, we get
\be
\frac{\rK_{\rm comb}(v)}{\ell^\ast(\cG)} \le \rK(v)
\ee
for all $v\in\cV$. Hence the claim follows from Lemma \ref{lem:curv}. 
\end{proof}	

With a little extra effort and using an argument similar to that in the proof of \eqref{eq:connCheeger} one can show the following bounds.

\begin{lemma}\label{lem:curv_combess}
Assume $\cG$ is an oriented graph such that the function $\rK$ (and hence $\rK_{\comb}$) is positive on $\cV$ and set 
\begin{align}
\rK^{\ess}(\cG) & :=\liminf_{v \in \cV} \rK(v), & \rK_{\rm comb}^{\ess}(\cV) & :=\liminf_{v\in\cV}\rK_{\rm comb}(v).
\end{align}
 Then the isoperimetric constant at infinity \eqref{eq:alphaessG} satisfies
\be
\alpha_{\ess}(\cG) \geq \rK^{\ess}(\cG), 
\ee
and
\be\label{eq:est_curvcomb}
 \frac{\rK_{\rm comb}^{\ess}(\cV)}{\ell^\ast_{\ess}(\cG)}  \leq  \alpha_{\ess}(\cG)   \leq  \frac{2}{\ell^\ast_{\ess}(\cG)}, 
\ee
\end{lemma}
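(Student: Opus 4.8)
The plan is to establish the three inequalities separately, putting the bulk of the work into the first one, $\alpha_{\ess}(\cG)\ge \rK^{\ess}(\cG)$, which is the ``at infinity'' analog of Lemma \ref{lem:curv}. The lower half of the second display will then follow by combining this bound with a comparison between $\rK^{\ess}$ and $\rK_{\rm comb}^{\ess}$, while the upper bound $\alpha_{\ess}(\cG)\le 2/\ell^\ast_{\ess}(\cG)$ is a routine passage to the limit in the elementary estimate \eqref{eq:alphaGest}.

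For the lower bound $\alpha_{\ess}(\cG)\ge \rK^{\ess}(\cG)$, I may assume $\rK^{\ess}(\cG)>0$ (otherwise there is nothing to prove). Fix $\varepsilon\in(0,\rK^{\ess}(\cG))$ and set $\kappa:=\rK^{\ess}(\cG)-\varepsilon>0$. By definition of $\liminf$, there is a finite vertex set $V_0\subset\cV$ with $\rK(v)>\kappa$ for all $v\in\cV\setminus V_0$. Let $\wt\cG_0\in\cK_\cG$ be a finite subgraph containing the entire star $\cE_v$ of every $v\in V_0$. I then run the computation from the proof of Lemma \ref{lem:curv} on an arbitrary $\cY\in\cK_{\cG\setminus\wt\cG_0}$. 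The crucial point is that every vertex $v$ with $\cE_v^+(\cY)\neq\varnothing$ lies outside $V_0$ (since the edges incident to $V_0$-vertices were all absorbed into $\wt\cG_0$), so $\rK(v)>\kappa$ there and hence $\sup_{e\in\cE_v^+}|e|\le \kappa^{-1}(1-\#(\cE_v^-)/\#(\cE_v^+))$. This yields $\mes(\cY)\le \kappa^{-1}\sum_{v\in\cY}\#(\cE_v^+(\cY))(1-\#(\cE_v^-)/\#(\cE_v^+))$, and the identity $\sum_v\#(\cE_v^+(\cY))=\sum_v\#(\cE_v^-(\cY))$ (each sum counting the edges of $\cY$), together with the vanishing of the interior contributions (for $v\in\cY\setminus\partial_\cG\cY$ the full star lies in $\cY$, so $\cE_v^\pm(\cY)=\cE_v^\pm$), reduces the sum to $\sum_{v\in\partial_\cG\cY}\#(\cE_v^-(\cY))\le\deg(\partial_\cG\cY)$. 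Using \eqref{eq:boundary_subG}, I conclude $\mes(\cY)\le\kappa^{-1}\deg(\partial_{\cG\setminus\wt\cG_0}\cY)$, so $\alpha(\cG\setminus\wt\cG_0)\ge\kappa$ and therefore $\alpha_{\ess}(\cG)\ge\kappa$; letting $\varepsilon\to0$ gives the claim.

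For the lower half of the second display it suffices, by the inequality just proved, to show $\rK^{\ess}(\cG)\ge\rK_{\rm comb}^{\ess}(\cV)/\ell^\ast_{\ess}(\cG)$. Writing $\rK(v)=\rK_{\rm comb}(v)/\sup_{e\in\cE_v^+}|e|$ (valid since $\rK(v)>0$), I will observe that $\limsup_{v}\sup_{e\in\cE_v^+}|e|\le\ell^\ast_{\ess}(\cG)$: given a finite edge set $\wt\cE$ with $\sup_{e\in\cE\setminus\wt\cE}|e|$ close to $\ell^\ast_{\ess}(\cG)$, every vertex outside the finite set of endpoints of $\wt\cE$ has all its incident edges outside $\wt\cE$, hence $\sup_{e\in\cE_v^+}|e|\le\sup_{e\notin\wt\cE}|e|$ there. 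Combining this with $\liminf_v\rK_{\rm comb}(v)=\rK_{\rm comb}^{\ess}(\cV)$ and taking $\liminf$ of the quotient gives the desired comparison. Finally, the upper bound follows by applying \eqref{eq:alphaGest} to each subgraph $\cG\setminus\wt\cG_0$, namely $\alpha(\cG\setminus\wt\cG_0)\le 2/\ell^\ast(\cG\setminus\wt\cG_0)$, and passing to the limit via \eqref{eq:alphaesslimit}, noting that $\ell^\ast(\cG\setminus\wt\cG_0)$ decreases to $\ell^\ast_{\ess}(\cG)$ as $\wt\cG_0$ exhausts $\cG$.

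The main obstacle is the bookkeeping in the first step: one must choose $\wt\cG_0$ so that the curvature hypothesis $\rK(v)>\kappa$ is available at \emph{every} vertex contributing an outgoing edge to an arbitrary $\cY\subset\cG\setminus\wt\cG_0$, while simultaneously ensuring that the interior vertices of $\cY$ still carry their complete stars, so that their contributions cancel exactly as in Lemma \ref{lem:curv}. Absorbing the whole stars of the finitely many exceptional vertices into $\wt\cG_0$ achieves both at once, and the identification \eqref{eq:boundary_subG} of boundaries is what lets an estimate performed on $\cG\setminus\wt\cG_0$ be read back as a bound on $\alpha(\cG\setminus\wt\cG_0)$.
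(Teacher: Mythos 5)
Your proof is correct and takes essentially the route the paper intends: the paper states Lemma \ref{lem:curv_combess} without a written-out proof, remarking only that it follows by ``a little extra effort'' from the computation of Lemma \ref{lem:curv} combined with the exhaustion argument used for \eqref{eq:connCheegerEss}, and that is precisely what you carry out --- running the curvature estimate on subgraphs $\cY\in\cK_{\cG\setminus\wt\cG_0}$ with $\rK$ still taken relative to the full graph $\cG$, invoking \eqref{eq:boundary_subG} to read the result as a bound on $\alpha(\cG\setminus\wt\cG_0)$, and passing to the limit via \eqref{eq:alphaesslimit}. Your key bookkeeping step (absorbing the entire stars of the finitely many vertices where $\rK(v)\le\rK^{\ess}(\cG)-\varepsilon$ into $\wt\cG_0$, so that the interior cancellation of Lemma \ref{lem:curv} survives) is exactly the detail the paper leaves implicit, and your remaining two inequalities (the comparison $\rK^{\ess}(\cG)\ge\rK_{\rm comb}^{\ess}(\cV)/\ell^\ast_{\ess}(\cG)$ and the limit of \eqref{eq:alphaGest}) are sound.
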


Combining Lemma \ref{lem:curv_combess} with the Cheeger-type estimate, we immediately get the following result.

\begin{corollary}\label{cor:curv_combess}
		If $\cG$ is an oriented graph such that the function $\rK_{\rm comb}$ is nonnegative on $\cV$, then
\begin{align}
 \lambda_0(\bH) &\ge  \frac{\rK_{\rm comb}(\cV)^2}{4\,\ell^\ast(\cG)^2}, & \lambda_0^{\ess}(\bH)\ge  \frac{\rK_{\rm comb}^{\ess}(\cV)^2}{4\,\ell^\ast_{\ess}(\cG)^2}.
\end{align}
	In particular, if $\rK_{\rm comb}^{\ess}(\cV)>0$, then the spectrum of $\bH$ is purely discrete precisely when $\ell^\ast_{\ess}(\cG)=0$. 
\end{corollary}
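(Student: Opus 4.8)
The plan is to obtain both displayed inequalities by simply chaining the curvature lower bounds on the isoperimetric constants (Lemmas~\ref{lem:combcurv} and~\ref{lem:curv_combess}) with the Cheeger-type estimate (Theorem~\ref{th:Cheeger}), and then to read off the discreteness characterization by pairing the resulting lower bound on $\lambda_0^{\ess}(\bH)$ with the elementary upper bound from Lemma~\ref{lem:est01}.

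For the first inequality I would argue as follows. If $\rK_{\rm comb}(\cV)>0$, then in particular $\rK_{\rm comb}$ is strictly positive on $\cV$, so Lemma~\ref{lem:combcurv} yields $\alpha(\cG)\ge \rK_{\rm comb}(\cV)/\ell^\ast(\cG)$; squaring and inserting this into the first estimate of Theorem~\ref{th:Cheeger} gives exactly $\lambda_0(\bH)\ge \rK_{\rm comb}(\cV)^2/(4\,\ell^\ast(\cG)^2)$. If instead $\rK_{\rm comb}(\cV)=0$, the right-hand side vanishes and the inequality is trivial because $\bH\ge0$. The second inequality is entirely analogous, now using the lower bound $\alpha_{\ess}(\cG)\ge \rK_{\rm comb}^{\ess}(\cV)/\ell^\ast_{\ess}(\cG)$ from Lemma~\ref{lem:curv_combess} together with the second bound of Theorem~\ref{th:Cheeger}.

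For the ``in particular'' part, recall that purely discrete spectrum is equivalent to $\lambda_0^{\ess}(\bH)=+\infty$. Assume $\rK_{\rm comb}^{\ess}(\cV)>0$. If $\ell^\ast_{\ess}(\cG)=0$, the just-established lower bound forces $\lambda_0^{\ess}(\bH)=+\infty$, hence discreteness. Conversely, if $\ell^\ast_{\ess}(\cG)>0$, then Lemma~\ref{lem:est01} (see Remark~\ref{rem:3.3}(ii)) gives $\lambda_0^{\ess}(\bH)\le \pi^2/\ell^\ast_{\ess}(\cG)^2<\infty$, so the spectrum is not purely discrete. This contrapositive pairing of the two one-sided bounds is precisely what upgrades the statement to an ``if and only if''.

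I do not expect a genuine obstacle here, since all the substantive work is already contained in the cited lemmas. The only points requiring a little care are the passage from the nonnegativity hypothesis to the strict positivity needed to invoke Lemma~\ref{lem:combcurv} (handled by treating $\rK_{\rm comb}(\cV)=0$ as a trivial case in which the bound reduces to $\lambda_0(\bH)\ge 0$), and ensuring that the discreteness equivalence draws its lower bound from the curvature estimate and its matching upper bound from the edge-length estimate of Lemma~\ref{lem:est01}.
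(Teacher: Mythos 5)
Your proposal is correct and takes essentially the same route as the paper, which derives the corollary simply by combining Lemma~\ref{lem:combcurv} and Lemma~\ref{lem:curv_combess} with the Cheeger bound of Theorem~\ref{th:Cheeger}, the converse direction of the discreteness claim being supplied, exactly as you do, by the upper bound $\lambda_0^{\ess}(\bH)\le \pi^2/\ell^\ast_{\ess}(\cG)^2$ of Lemma~\ref{lem:est01} (cf.\ Remark~\ref{rem:3.3}(ii)). Your explicit case split at $\rK_{\rm comb}(\cV)=0$ (resp.\ $\rK_{\rm comb}^{\ess}(\cV)=0$) in fact handles the mismatch between the corollary's nonnegativity hypothesis and the strict positivity assumed in the cited lemmas more carefully than the paper, which leaves this reduction implicit.
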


\begin{remark}
Let us mention that in the case when $\rK_{\rm comb}^{\ess}(\cV)=0$ the condition $\ell^\ast_{\ess}(\cG)=0$ is no longer sufficient for the discreteness. For further details we refer to Section \ref{ss:IV.04} and, more specifically, to the example of polynomially growing antitrees (see Example \ref{ex:polyat}).
\end{remark}

\section{Growth volume estimates}\label{ss:III.05} 

Here we plan to exploit the results from \cite{stu} to get upper bounds on the spectra of quantum graphs in terms of the exponential volume growth rates, the so-called Brooks-type estimates (cf. \cite{bro}, \cite{hkw13}, \cite{stu} for further details and references). Following \cite{stu}, we introduce the following notation. For every $x\in\cG$ and $r>0$, let
\be\label{eq:ball}
B_r(x): = \{y\in \cG|\ \varrho_0(x,y)<r\}.
\ee
Here $\varrho_0$ is the natural path metric on $\cG$. Let also 
\be\label{eq:volball}
\vol_x(r) := \mes(B_r(x)),
\ee
and
\be\label{eq:vol*}
\vol_\ast(r) := \inf_{x\in\cG} \frac{\mes(B_r(x))}{\mes(B_1(x))}.
\ee
Next we define the following numbers
\begin{align}\label{eq:mudef}
\mu_x(\cG) & := \liminf_{r\to \infty} \frac{\log(\vol_x(r))}{r}, & \mu_\ast(\cG) & := \liminf_{r\to \infty} \frac{\log(\vol_\ast(r))}{r}.
\end{align} 
Notice that $\mu_x(\cG)$ does not depend on $x\in\cG$ if $\cG = \cup_{r>0}B_r(x)$ for some (and hence for all) $x\in\cG$.
If both conditions are satisfied, then we shall write $\mu(\cG)$ instead of $\mu_x(\cG)$. 

\begin{theorem}\label{th:brooks}
Suppose $(\cV,\varrho_0)$ is complete as a metric space. Then 
\begin{align}\label{eq:est04}
\lambda_0(\bH)  \le \lambda_0^{\ess}(\bH) \le \frac{1}{4}\mu_\ast(\cG)^2 \le \frac{1}{4}\mu(\cG)^2.
\end{align}
\end{theorem}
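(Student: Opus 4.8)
The plan is to adapt the volume-growth technique of Sturm \cite{stu}, originally developed for regular Dirichlet forms on metric measure spaces, to the quantum graph setting. Since $(\cV,\varrho_0)$ is complete, Corollary \ref{cor:gaffney} guarantees that $\bH_0$ is essentially self-adjoint, so $\bH$ coincides with its closure and the form $\gt_\cG$ agrees with the maximally defined form $\gt_\cG^{(N)}$ on $H^1(\cG)$. The key point is that the metric graph $\cG$, together with the natural path metric $\varrho_0$ and Lebesgue measure, constitutes a strongly local regular Dirichlet space whose intrinsic metric is exactly $\varrho_0$ (the gradient of a Lipschitz function is bounded by its Lipschitz constant, with equality achievable). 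First I would verify that Sturm's hypotheses are met: completeness of $(\cG,\varrho_0)$ (which follows from completeness of $(\cV,\varrho_0)$ since each edge has finite length), and the identification of the energy measure with $|f'(x)|^2\,dx$.

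The central estimate from \cite{stu} states that for a strongly local Dirichlet form on a complete space, the bottom of the essential spectrum is bounded above by $\tfrac14\mu_\ast^2$, where $\mu_\ast$ is the exponential volume growth rate measured through the intrinsic metric balls. Concretely, I would construct test functions supported on annuli $B_{R}(x)\setminus B_r(x)$ of the form $f = \phi\circ\varrho_0(x,\cdot)$, where $\phi$ decays exponentially like $e^{-\beta t}$ with $\beta$ slightly larger than $\tfrac12\mu_\ast(\cG)$. The Rayleigh quotient \eqref{eq:Rayleighess} for such functions is controlled because $|f'| \le \beta |f|$ almost everywhere (the intrinsic metric property ensures $|\nabla \varrho_0|\le 1$), giving $\|f'\|^2_{L^2} \le \beta^2\|f\|^2_{L^2}$. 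To pass to the essential spectrum one pushes the support of these test functions out to infinity; the volume growth condition \eqref{eq:mudef} ensures that the error terms arising from the annular cutoffs do not dominate, so that the quotient can be made arbitrarily close to $\beta^2$. Optimizing over $\beta \to \tfrac12\mu_\ast(\cG)$ yields $\lambda_0^{\ess}(\bH)\le \tfrac14\mu_\ast(\cG)^2$.

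The remaining inequalities are comparatively routine. The bound $\lambda_0(\bH)\le\lambda_0^{\ess}(\bH)$ is immediate from $\sigma_{\ess}(\bH)\subseteq\sigma(\bH)$, and the final inequality $\mu_\ast(\cG)\le\mu(\cG)$ follows directly from the definitions in \eqref{eq:mudef}, since $\vol_\ast(r)\le \vol_x(r)/\vol_x(1)$ for each fixed $x$, so that $\liminf_{r\to\infty} r^{-1}\log\vol_\ast(r) \le \liminf_{r\to\infty} r^{-1}\log\vol_x(r)$.

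The main obstacle will be justifying the transfer of Sturm's abstract result to quantum graphs with rigor, specifically verifying that the intrinsic metric associated with the Dirichlet form $\gt_\cG$ genuinely coincides with the natural path metric $\varrho_0$. On a single edge this is clear, but one must check that the intrinsic-metric characterization survives the gluing at vertices under Kirchhoff conditions, and that the exponentially decaying radial test functions $\phi\circ\varrho_0(x,\cdot)$ lie in the form domain $H^1(\cG)=\dom(\gt_\cG^{(N)})$ with the expected gradient bound. A secondary technical point is handling the distinction between $\vol_\ast(r)$ and $\vol_x(r)$: Sturm's theorem is most naturally stated in terms of a uniform growth rate, so care is needed to extract the $\mu_\ast$ bound rather than merely the pointwise $\mu_x$ bound, which requires the ratio normalization by $\mes(B_1(x))$ appearing in \eqref{eq:vol*}.
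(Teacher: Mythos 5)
Your proposal is correct and follows essentially the same route as the paper: both treat $\gt_\cG$ as a strongly local regular Dirichlet form whose intrinsic metric is the natural path metric $\varrho_0$, use completeness (via Corollary \ref{cor:gaffney}) to identify $\bH$ with the closure of $\bH_0$, and then invoke the Sturm--Brooks volume growth machinery of \cite{stu}, with the outer inequalities handled exactly as you describe. The two technical points you flag at the end are precisely how the paper closes them: relative compactness of the balls $B_r(x)$ follows from completeness via the Hopf--Rinow type theorem for graphs \cite{hkmw13}, and the passage from the pointwise rate $\mu(\cG)$ to the normalized rate $\mu_\ast(\cG)$ in the essential-spectrum bound is obtained from \cite[Theorem 1]{not}, \cite[Theorem 1.1 and Remark (e)]{hkw13} together with the observation that $\mes(B_1(x))\ge 1$ for all $x\in\cG$.
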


\begin{proof}
The first and the last inequalities in \eqref{eq:est04} are obvious and hence it remains to show that
\[
\lambda_0^{\ess}(\bH) \le \frac{1}{4}\mu_\ast(\cG)^2.
\]
Notice that by Corollary \ref{cor:gaffney}, the pre-minimal operator $\bH_0$ is essentially self-adjoint and hence $\bH$ is its closure. Let us consider the corresponding quadratic form $\gt_\cG$ defined as the closure in $L^2(\cG)$ of the form $\gt_\cG^0$ (see \eqref{eq:gt0} and \eqref{eq:gt00}). It is not difficult to check that the form $\gt_\cG$ is a  strongly local regular Dirichlet form (see \cite{fuk10} for definitions). On the other hand, using the Hopf--Rinow type theorem for graphs (see \cite{hkmw13}), with a little work one can show that
every ball $B_r(x)$ is relatively compact if $(\cV,\varrho_0)$ is complete. Therefore, by \cite[Theorem 5]{stu} and \cite[Theorem 1]{not}, \cite[Theorem 1.1]{hkw13}, we get
\begin{align*}
\lambda_0(\bH) & \le \frac{1}{4}\mu_\ast(\cG)^2, & \lambda_0^{\ess}(\bH) & \le \frac{1}{4}\mu(\cG)^2.
\end{align*}
Noting that $\mes(B_1(x))\ge 1$ for all $x\in \cG$ and taking into account \cite[Remark (e) on p.885]{hkw13}, we arrive at the desired estimate.
\end{proof}

The next result is straightforward from Theorem \ref{th:brooks}.

\begin{corollary}\label{cor:lam=0}
Let $(\cV,\varrho_0)$ be complete as a metric space. Then:
\begin{itemize}
\item[(i)] $\lambda_0(\bH)=\lambda_0^{\ess}(\bH)=0$ if $\mu(\cG)=0$. 
\item[(ii)] The spectrum of $\bH$ is not discrete if $\mu_\ast (\cG)<\infty$. 
\end{itemize}
\end{corollary}

\begin{remark}\label{rem:7.1}
Clearly, to compute or estimate $\mu_\ast(\cG)$ is a much more involved problem comparing to that of $\mu(\cG)$. However, it might happen that $\mu_\ast(\cG)<\mu(\cG)$ and hence $\mu_\ast(\cG)$ provides a better bound (see Example \ref{ex:sparse}).
\end{remark}

\begin{remark}
Let us mention that these results have several further consequences for the heat semigroup $\E^{-t\bH}$ generated by the operator $\bH$. For example, $\mu_\ast(\cG)=0$ implies the exponential instability of the corresponding heat semigroup on $L^p(\cG)$ for all $p\in [1,\infty]$ (see \cite[Corollary 2]{stu}). 
\end{remark}

We finish this section with comparing the estimates \eqref{eq:est04} with the ones obtained in \cite{ekmn} in terms of the volume growth of the corresponding discrete graph. Following \cite{hkw13} (see also \cite[\S 4.3]{ekmn}), define the constant 
\be\label{eq:mu-d}
\mu_d(\cG):=\liminf_{r\to \infty} \frac{\log m(B_r(v))}{r}
\ee
for a fixed $v\in\cV$. 
Here 
\[
m(B_r(v)) = \sum_{u\in B_r(v)} m(u),\qquad v\in\cV. 
\]
Notice that $\mu_d(\cG)$ does not depend on the choice of $v\in\cV$ if $\cG=\cup_{r>0} B_r(x)$. 

\begin{lemma}\label{lem:mu=mu}
If $\ell^\ast(\cG)<\infty$ and $(\cV,\varrho_0)$ is complete as a metric space, then 
\begin{align}\label{eq:mu=mu}
\mu(\cG)  = \mu_d(\cG). 
\end{align}
\end{lemma}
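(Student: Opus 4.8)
The plan is to fix a basepoint $v_0\in\cV$ and to compare the continuous ball volume $\mes(B_r(v_0))$ with the discrete weighted volume $m(X_r):=\sum_{u\in X_r}m(u)$, where $X_r:=\{u\in\cV:\varrho_0(v_0,u)<r\}$ is the vertex ball entering the definition of $\mu_d(\cG)$. Since $\cG=\bigcup_{r>0}B_r(v_0)$ by connectedness, neither $\mu(\cG)$ nor $\mu_d(\cG)$ depends on $v_0$, so it suffices to produce matching two-sided bounds between $\mes(B_r(v_0))$ and $m(X_r)$ for this single basepoint and then pass to the growth rates $\mu(\cG)=\liminf_{r\to\infty}r^{-1}\log\mes(B_r(v_0))$ and $\mu_d(\cG)=\liminf_{r\to\infty}r^{-1}\log m(X_r)$.

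The heart of the argument is two purely metric inclusions. First, if a point $y$ lies on an edge $e_{u,w}$ whose endpoints both satisfy $\varrho_0(v_0,u),\varrho_0(v_0,w)\ge r$, then a short computation of the distance to an interior point gives $\varrho_0(v_0,y)\ge r$ as well; hence every point of $B_r(v_0)$ lies on an edge incident to a vertex of $X_r$, that is, $B_r(v_0)\subseteq\bigcup_{u\in X_r}\bigcup_{e\in\cE_u}e$. Taking measures and using $\mes\big(\bigcup_{u\in X_r}\bigcup_{e\in\cE_u}e\big)\le\sum_{u\in X_r}\sum_{e\in\cE_u}|e|=m(X_r)$ yields $\mes(B_r(v_0))\le m(X_r)$ for every $r$, whence $\mu(\cG)\le\mu_d(\cG)$. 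Second, any edge incident to a vertex $u\in X_r$ has all of its points within distance $\varrho_0(v_0,u)+|e|<r+\ell^\ast(\cG)$ of $v_0$, so $\bigcup_{u\in X_r}\bigcup_{e\in\cE_u}e\subseteq B_{r+\ell^\ast(\cG)}(v_0)$. Splitting the weighted volume according to how many endpoints of an edge lie in $X_r$, namely $m(X_r)=2\sum_{e\in\cE_i(X_r)}|e|+\sum_{e\in\cE_b(X_r)}|e|\le 2\,\mes\big(\bigcup_{u\in X_r}\bigcup_{e\in\cE_u}e\big)$, then gives $m(X_r)\le 2\,\mes(B_{r+\ell^\ast(\cG)}(v_0))$.

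It remains to convert the second estimate into $\mu_d(\cG)\le\mu(\cG)$. From $m(X_r)\le 2\,\mes(B_{r+\ell^\ast(\cG)}(v_0))$ one has $r^{-1}\log m(X_r)\le r^{-1}\log 2+\tfrac{r+\ell^\ast(\cG)}{r}\cdot\tfrac{\log\mes(B_{r+\ell^\ast(\cG)}(v_0))}{r+\ell^\ast(\cG)}$; as $r\to\infty$ the first summand vanishes, the factor $\tfrac{r+\ell^\ast(\cG)}{r}\to 1$, and the remaining $\liminf$ equals $\mu(\cG)$, since a constant shift of the radius by $\ell^\ast(\cG)$ does not change the growth rate. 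Taking $\liminf$ gives $\mu_d(\cG)\le\mu(\cG)$, which combined with the first part proves \eqref{eq:mu=mu}. The two hypotheses enter precisely here: $\ell^\ast(\cG)<\infty$ is exactly what makes the overshoot in the second inclusion a bounded shift in the radius (the argument would break for unboundedly long edges), while completeness of $(\cV,\varrho_0)$ guarantees, via the Hopf--Rinow theorem \cite{hkmw13}, that each $X_r$ is finite, so that all volumes occurring above are finite and both growth rates are well defined and basepoint independent. I expect the main obstacle to be a clean verification of the two metric inclusions, especially the distance estimate for points in the interior of an edge; everything after that is routine $\liminf$ bookkeeping.
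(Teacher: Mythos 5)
Your proposal is correct and takes essentially the same route as the paper: the two-sided comparison $\mes(B_r(v_0))\le m(X_r)\le 2\,\mes\big(B_{r+\ell^\ast(\cG)}(v_0)\big)$ is precisely the pair of inequalities in the paper's proof, and the concluding $\liminf$ bookkeeping with the bounded radius shift is identical. Your only additions are to spell out the two metric inclusions and the role of completeness (finiteness of the vertex balls via the Hopf--Rinow-type theorem of \cite{hkmw13}), both of which the paper leaves implicit.
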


\begin{proof}
First observe that 
\[
m(B_r(v)) = 2\sum_{\{u,\ti{u}\} \subset B_r(v)} |e_{u,\ti{u}}| + \sum_{\substack{\{u,\ti{u}\}\not\subset B_r(v)\\ \{u,\ti{u}\}\cap B_r(v) \neq\varnothing}}|e_{u,\ti{u}}| \ge \mes(B_r(v)) = \vol_v(r).
\] 
for all $v\in \cV$ and $r>0$, which immediately implies $\mu(\cG) \le \mu_d(\cG)$. Similarly, we also get
\begin{equation}
m(B_r(v)) \le 2\mes(B_{r+\ell^\ast}(v))
\end{equation}
for all $v\in \cV$ and $r>0$ and hence
\[
\mu_d(\cG) \le \liminf_{r\to \infty} \frac{\log (2\vol_v(r+\ell^\ast))}{r} = \mu(\cG),
\]
which finishes the proof of \eqref{eq:mu=mu}.
\end{proof}

\begin{remark}
A few remarks are in order.
\begin{itemize}
\item[(i)] On the one hand, it does not look too surprising that the exponential growth rates for two Dirichlet forms $\gt_\cG$ and $\gt_{\rh}$ coincide. In particular this reflects the equivalence \eqref{eq:equivA} in the case of sub-exponential growth rates. However, comparing \eqref{eq:mu=mu} with the fact that there is no equality between $\lambda_0(\bH)$ and $\lambda_0(\rh)$ (see Section \ref{ss:III.00}), one can conclude that in the case of an exponential growth of volume balls, \eqref{eq:est04} might not lead to qualified estimates (and examples of trees and antitrees in the next section confirm this observation).   
\item[(ii)] Combining \eqref{eq:mu=mu} with Corollary \ref{cor:lam=0} we  obtain Theorem 4.19 from \cite{ekmn}.
\end{itemize}
\end{remark}
 	
\section{Examples}\label{sec:Examples}

In this section we are going to apply our results to certain classes of graphs (trees, antitrees, and Cayley graphs of finitely generated groups). Let us also recall that we always assume Hypotheses \ref{hyp:locfin}--\ref{hyp:02} to be satisfied.  

\subsection{Trees}\label{ss:IV.02}
Let us first recall some basic notions. A connected graph without cycles is called {\em a tree}. We shall denote trees (both combinatorial and metric) by $\cT$. Notice that for any two vertices $u$, $v$ on a tree $\cT = (\cV, \cE)$ there is exactly one path $\cP$ connecting $u$ and $v$.
 A tree $\cT = (\cV, \cE)$ with a distinguished vertex $o\in\cV$ is called {\em a rooted tree} and $o$ is called {\em the root} of $\cT$. In a rooted tree the vertices can be ordered according to (combinatorial) spheres. Namely, let $d(\cdot) := d(o,\cdot)$ be  the combinatorial distance to the root $o$ and $S_n$ be the $n$-th (combinatorial) sphere, i.e., the set of vertices $v\in\cV$ with $d(v)=n$. A vertex in the $(n+1)$-th sphere, which is connected to $v$ in the $n$-th sphere, is called {\em a forward neighbor} of $v$. 
 In what follows, we define an orientation on a rooted tree according to combinatorial spheres, that is, for every edge $e$ its initial vertex belongs to the smaller combinatorial sphere. 

We begin with the following simple estimate for rooted trees. According to the choice of orientation, we get $\rK_{\rm comb}(o) = \deg(o)$ and
\[
\rK_{\rm comb}(v) = \frac{\#(\cE_{v}^+) - \#(\cE_{v}^-)}{\#(\cE_{v}^+)} = \frac{\deg(v)-2}{\deg(v) -1}
\]
for all $v\in \cV\setminus\{o\}$. Therefore, $\rK_{\rm comb}$ is nonnegative on $\cV$ if there are no loose ends, that is, $\deg(v)\neq 1$ for all $v\in\cV$. Let
\begin{align*}
\deg_\ast(\cV) & :=\inf_{v\in\cV}\deg(v), & \deg_\ast^{\ess}(\cV) & :=\liminf_{v\in\cV}\deg(v).
\end{align*}
Hence we easily get 
\begin{align*}
\rK_{\rm comb}(\cT) & = \frac{\deg_\ast(\cV)-2}{\deg_\ast(\cV) -1}, & \rK_{\rm comb}^{\ess}(\cT) & = \frac{\deg_\ast^{\ess}(\cV)-2}{\deg_\ast^{\ess}(\cV) -1},
\end{align*}
and therefore we end up with the following estimate.

\begin{lemma}\label{lem:rootcurv}
Assume $\cT$ is a rooted tree without loose ends. 
Then
\begin{align}\label{eq:treesbound}
\lambda_0(\bH) & \ge \frac{\rK_{\rm comb}(\cT)^2}{4\,\ell^\ast(\cG)^2}, & \lambda_0^{\ess}(\bH) & \ge \frac{\rK_{\rm comb}^{\ess}(\cT)^2}{4\,\ell^\ast_{\ess}(\cG)^2}.
\end{align}
In particular, $\lambda_0(\bH)>0$ if and only if $\ell^\ast(\cG)<\infty$ and the spectrum of $\bH$ is purely discrete if and only if $\ell^\ast_{\ess}(\cG)=0$.
\end{lemma}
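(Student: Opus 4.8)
The plan is to read off \eqref{eq:treesbound} directly from Corollary~\ref{cor:curv_combess} and then obtain the two equivalences by combining these lower bounds with the complementary upper bounds of Lemma~\ref{lem:est01}.

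First I would verify the hypothesis of Corollary~\ref{cor:curv_combess}, namely that $\rK_{\rm comb}$ is nonnegative on $\cV$; this is exactly the computation recorded just above the statement. Since $\cT$ has no loose ends we have $\deg(v)\neq 1$, and Hypothesis~\ref{hyp:02} excludes $\deg(v)=2$, so $\deg(v)\ge 3$ for every $v\in\cV$. Hence $\rK_{\rm comb}(v)=\frac{\deg(v)-2}{\deg(v)-1}\ge\frac12$ for each $v\neq o$, while $\rK_{\rm comb}(o)>0$, and therefore $\rK_{\rm comb}(\cT)=\frac{\deg_\ast(\cV)-2}{\deg_\ast(\cV)-1}\ge\frac12$ and likewise $\rK_{\rm comb}^{\ess}(\cT)\ge\frac12$. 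Feeding these into Corollary~\ref{cor:curv_combess} gives \eqref{eq:treesbound} at once.

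Next I would treat the two equivalences. For the positivity criterion: if $\ell^\ast(\cG)<\infty$, the first bound in \eqref{eq:treesbound} together with $\rK_{\rm comb}(\cT)\ge\frac12$ yields $\lambda_0(\bH)>0$; conversely, if $\ell^\ast(\cG)=\infty$, then Lemma~\ref{lem:est01} (cf. Remark~\ref{rem:3.3}(i)) forces $\lambda_0(\bH)\le \pi^2/\ell^\ast(\cG)^2=0$, proving $\lambda_0(\bH)>0$ if and only if $\ell^\ast(\cG)<\infty$. For discreteness I would use that $\sigma(\bH)$ is purely discrete exactly when $\lambda_0^{\ess}(\bH)=+\infty$: when $\ell^\ast_{\ess}(\cG)=0$ the second bound in \eqref{eq:treesbound} with $\rK_{\rm comb}^{\ess}(\cT)\ge\frac12$ makes $\lambda_0^{\ess}(\bH)=+\infty$, whereas when $\ell^\ast_{\ess}(\cG)>0$ Lemma~\ref{lem:est01} gives $\lambda_0^{\ess}(\bH)\le \pi^2/\ell^\ast_{\ess}(\cG)^2<\infty$. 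This is precisely the concluding assertion of Corollary~\ref{cor:curv_combess} applied with $\rK_{\rm comb}^{\ess}(\cT)>0$.

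I do not expect a genuine obstacle: the lemma is essentially a specialization of Corollary~\ref{cor:curv_combess} to rooted trees, the curvature being controlled purely through the minimal degree. The only point meriting a little care is the converse half of each equivalence, where the lower bound \eqref{eq:treesbound} is satisfied but uninformative (it gives nothing when $\ell^\ast$ or $\ell^\ast_{\ess}$ is large), so one must appeal instead to the matching upper bounds furnished by Lemma~\ref{lem:est01}.
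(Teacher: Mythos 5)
Your proposal is correct and is essentially the paper's own argument: the paper proves the lemma by combining Corollary~\ref{cor:curv_combess} with Remark~\ref{rem:3.3}(i) (i.e.\ the upper bounds of Lemma~\ref{lem:est01}) and the bound $\tfrac12\le\rK_{\rm comb}(\cT)<1$, which follows exactly as in your degree computation from the absence of loose ends and Hypothesis~\ref{hyp:02}. You merely spell out the details that the paper leaves implicit, including the correct observation that the converse directions of both equivalences rest on the matching upper bounds from Lemma~\ref{lem:est01} rather than on \eqref{eq:treesbound}.
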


\begin{proof}
The proof immediately follows from Corollary \ref{cor:curv_combess}, Remark \ref{rem:3.3}(i) and the fact that the combinatorial curvature admits the following bound (take also into account Hypothesis \ref{hyp:02})
\[
\frac{1}{2} \le \rK_{\rm comb}(\cT) <1.\qedhere
\]
\end{proof}

\begin{remark}\label{rem:tress01}
A few remarks are in order.
\begin{itemize}
\item[(i)] In the case of regular metric trees (these are rooted trees with an additional symmetry -- all the vertices from the same distance sphere have equal degrees as well as all the edges of the same generation are of the same length), the second claim in Lemma \ref{lem:rootcurv} was observed by M.\ Solomyak in \cite{sol04}. In fact, under Hypothesis \ref{hyp:02}, conditions (5.1) and (5.5) of \cite{sol04} hold true if and only if, respectively, $\ell^\ast(\cG)<\infty$ and $\ell^\ast_{\ess}(\cG)=0$. However, the case of the Neumann Laplacian is considered in \cite{sol04}, and it follows that criteria for the positivity and discreteness for the Neumann and Dirichlet Laplacians coincide.
\item[(ii)] Let us mention that the positivity (however, without estimates) of a combinatorial isoperimetric constant for the type of trees considered in Lemma \ref{lem:rootcurv} is known (see \cite[Theorem 10.9]{woe})
\end{itemize} 
\end{remark}

In the case of trees the estimates \eqref{eq:treesbound} can be improved, however, instead of providing these generalizations we are going to consider only one particular case.
 
\begin{example}[Bethe lattices] \label{ex:tree01}
Fix $\beta \in \Z_{\ge 3}$ and consider the combinatorial graph, which is a rooted tree such that all vertices have degree $\beta$. This type of trees is called {\em Bethe lattices} (also known as {\em Cayley trees} or {\em homogeneous trees}) and they will be denoted by $\T_\beta$. Suppose that the corresponding metric graph is equilateral, that is, $|e|=1$ for all $e\in\cE$. By abusing the notation, we shall denote the corresponding metric graph by $\T_\beta$ too. 
 Then one computes
\[
\rK_{\rm comb}(\T_\beta) = \rK^{\ess}_{\rm comb}(\T_\beta) = \frac{\beta-2}{\beta-1} = :\rK_{\beta}.
\]
Noting that $\rK_{\beta} \in [1/2,1)$ and applying Lemma \ref{lem:rootcurv}, we arrive at the following estimate
\be\label{eq:bethe}
\lambda_0^{\ess}(\T_\beta)\ge \lambda_0(\T_\beta)\ge \frac{1}{4}\rK_\beta^2.
\ee

On the other hand, it is straightforward to check that (see, e.g., \cite{dk})
	\begin{align}\label{eq:alphaBethe}
			\alpha( \T_\beta) & = \rK_{\rm comb}(\T_\beta) = \frac{\beta-2}{\beta-1}, & \alpha_d( \T_\beta) & =  \frac{\beta-2}{\beta}.
	\end{align}
In particular, this implies that the equality holds in the second inequality in \eqref{eq:connCheeger}. 	
Moreover, the spectra of both operators $\bH$ and $\rh$ can be computed explicitly (see, e.g., \cite[Example 6.3]{sol04} or \cite[Theorem 1.14]{dk} together with Theorem \ref{th:below}) and, in particular,
\[
\lambda_0(\bH) = \lambda_0^{\ess}(\bH) = \arccos^2\Big(\frac{2\sqrt{\beta-1}}{\beta}\Big).
\]	
Comparing the last equality with the estimate \eqref{eq:bethe}, one can notice a gap between these estimates.  

Let us mention that 
\[
\mu(\T_\beta) = \mu_o(\T_\beta) = \mu_\ast(\T_\beta) = \beta-1,
\]	
 and thus the volume growth estimates \eqref{eq:est04} do not provide a reasonable upper bound for large values of $\beta$.	\hfill$\lozenge$
\end{example}

Finally, we would like to mention that the absence of loose ends in Lemma \ref{lem:rootcurv} is essential as the next example shows.

\begin{example}[A ``sparse" tree with loose ends]\label{ex:sparse}
Consider the half-line $\R_{\ge 0}$ as an equilateral graph with vertices at the integers. Let us write $v_n$ for the vertex placed at  $n \in \Z_{\ge 0}$.  
Then, we will modify this graph by attaching edges to the vertices $v_n$ with $n\ge 1$. More precisely, to the $j^2$-th vertex $v_{j^2}$ with $j\in\Z_{\ge 1}$, we attach $2^{j^2}$ edges and to every other vertex $v_n$ with $n \notin \{j^2\}_{j\ge 1}$, we attach exactly one edge (see Figure \ref{fig:tree2}).

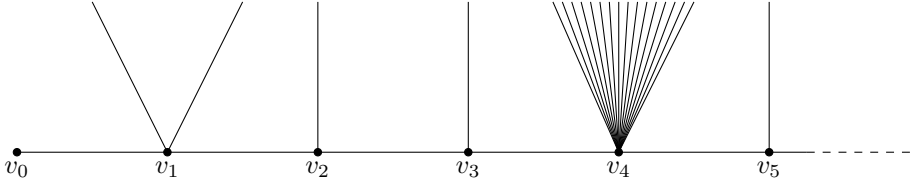
\begin{figure}
	\begin{center}
		\begin{tikzpicture}    
		
		\draw	[-]	(0,0) -- (10.5,0);
		\draw	[thin, -, dashed]	(10.5,0) -- (12,0);
		
		
		\draw		(2,0) -- (1,2);
		\draw		(2,0) -- (3,2);
		\draw		(4,0) -- (4,2);
		
		\foreach \x in {1,...,16}
		\draw (8,0) -- (7 + \x*2/16, 2);
		
		\draw		(6,0) -- (6,2);		
		\draw		(10,0) -- (10,2);
		
		\foreach \x in {0,...,5}
		\filldraw 
		(2*\x,0) circle (1.4pt) node [below] {$v_\x$}  ;
		\end{tikzpicture}
	\end{center}
	\caption{Tree with loose ends.}\label{fig:tree2}
\end{figure}

 Clearly, we end up with a tree graph $\cT$. For simplicity, we shall assume that the corresponding metric graph is equilateral, that is, $|e|=1$ for all $e\in\cT$. This tree is in a certain sense sparse and as a result it turns out that
\[ 
\mu_\ast (\cT) = 0,
\] 
and hence, by Theorem \ref{th:brooks},
\[
\lambda_0(\bH) = \lambda_0^{\ess}(\bH) = 0.
\]
In fact, it is enough to show that $\vol_\ast(r) = 1$ for all $r>1$. Namely, take $r>1$ and set $j_r:= 1+\floor{(r+1)/2}$, where $\floor{\cdot}$ is the usual floor function. Since
$j_r^2 - (j_r-1)^2 >r$, we get
\[ 
1 \leq  \vol_\ast(r) \leq \inf_{n\ge j_r} \frac{ \mes(B_r(v_{n^2} ))  } {B_1(v_{n^2}) } = \inf_{n\ge j_r} \frac{2^{n^2} + 2r + 2(r-1) } { 2^{n^2} +2 } = 1.
\]

It is interesting to mention that in this case $\mu(\cT) = \log(2)>0$. Indeed, 
\[
2r-1 + \sum_{k=1}^{\floor{\sqrt{r}}-1} (2^{k^2}-1) \le \vol_o(r) = \mes( B_r(v_0) ) \le 2r - 1 + \sum_{k=1}^{\floor{\sqrt{r}}} (2^{k^2}-1)
\] 
and hence for all $r>1$ we get
\[
2^{(\floor{\sqrt{r}}-1)^2} < \vol_o(r) \le 2^{\floor{\sqrt{r}}^2+1},
\]
which implies the desired equality. \hfill$\lozenge$
\end{example}
	
\subsection{Antitrees} \label{ss:IV.04}	
Let $\cG_d = (\cV, \cE)$ be a connected combinatorial graph. Fix a root vertex $o \in \cV$ and then order the graph with respect to the combinatorial spheres $S_n$, $n \in \Z_{\ge 0}$ (notice that $S_0=\{o\}$). The connected graph $\cG_d$ is called an {\em antitree} if every vertex in $S_n$ is connected to every vertex in $S_{n+1}$ and there are no horizontal edges, i.e., there are no edges with all endpoints in the same sphere (see Figure \ref{fig:antitree}). Clearly, an antitree is uniquely determined by the sequence $s_n := \#(S_n)$, $n\in\Z_{\ge 1}$.

	\begin{figure}
	\begin{center}
		\begin{tikzpicture}    [%
		,scale=.6
		,every node/.style={scale=.6}]
		\draw								(0,0) -- (-1, 1.5) ;
		\draw								(0,0) -- (1, 1.5) ;
		
		\draw								(-1,1.5) -- (-2, 3) ;
		\draw								(-1,1.5) -- (0, 3) ;
		\draw								(-1,1.5) -- (2, 3) ;
		\draw								(1,1.5) -- (-2, 3) ;
		\draw								(1,1.5) -- (0, 3) ;
		\draw								(1,1.5) -- (2, 3) ;
		
		\draw								(-2, 3) -- (-3, 4.5);
		\draw								(-2, 3) -- (-1, 4.5);
		\draw								(-2, 3) -- (1, 4.5);
		\draw								(-2, 3) -- (3, 4.5);
		\draw								(0, 3) -- (-3, 4.5);
		\draw								(0, 3) -- (-1, 4.5);
		\draw								(0, 3) -- (1, 4.5);
		\draw								(0, 3) -- (3, 4.5);
		\draw								(2, 3) -- (-3, 4.5);
		\draw								(2, 3) -- (-1, 4.5);
		\draw								(2, 3) -- (1, 4.5);
		\draw								(2, 3) -- (3, 4.5);
		
		\draw [dashed, gray]			(-3, 4.5) -- (-4, 6);
		\draw [dashed , gray]			(-3, 4.5) -- (-2, 6);
		\draw [dashed, gray]			(-3, 4.5) -- (0, 6);
		\draw [dashed , gray]			(-3, 4.5) -- (2, 6);
		\draw [dashed , gray]			(-3, 4.5) -- (4, 6);
		\draw [dashed , gray]			(-1, 4.5) -- (-4, 6);
		\draw [dashed , gray]			(-1, 4.5) -- (-2, 6);
		\draw [dashed , gray]			(-1, 4.5) -- (0, 6);
		\draw [dashed , gray]			(-1, 4.5) -- (2, 6);
		\draw [dashed , gray]			(-1, 4.5) -- (4, 6);
		\draw [dashed , gray]			(1, 4.5) -- (-4, 6);
		\draw [dashed , gray]			(1, 4.5) -- (-2, 6);
		\draw [dashed , gray]			(1, 4.5) -- (0, 6);
		\draw [dashed , gray]			(1, 4.5) -- (2, 6);
		\draw [dashed , gray]			(1, 4.5) -- (4, 6);
		\draw [dashed , gray]			(3, 4.5) -- (-4, 6);
		\draw [dashed , gray]			(3, 4.5) -- (-2, 6);
		\draw [dashed , gray]			(3, 4.5) -- (0, 6);
		\draw [dashed , gray]			(3, 4.5) -- (2, 6);
		\draw [dashed , gray]			(3, 4.5) -- (4, 6);
		
		\draw	[thin, ->, dashed]			(4, 1.5) -- (1.5, 1.5);
		\draw	[thin, ->, dashed]			(4, 3) -- (2.5, 3);
		\draw	[thin, -> , dashed]			(4, 4.5) -- (3.5, 4.5);
		\draw	[thin, -> , dashed]			(4, 0) -- (0.5, 0);
		\node  at (4.5, 0) {\Large $S_0$};
		\node  at (4.5, 1.5) {\Large $S_1$};
		\node  at (4.5, 3) {\Large $S_2$};
		\node  at (4.5, 4.5) {\Large $S_3$};	
		\filldraw 
		(0,0) circle (2pt) 
		(-1, 1.5) circle (2pt)
		(1, 1.5) circle (2pt)
		(-2, 3) circle (2pt)
		(0, 3) circle (2pt)
		(2, 3) circle (2pt)
		(-3, 4.5) circle (2pt)
		(-1, 4.5) circle (2pt)
		(1, 4.5) circle (2pt)
		(3, 4.5) circle (2pt);
		\end{tikzpicture}
	\end{center}
	\caption{Example of an antitree with $s_n = n+1$.}\label{fig:antitree}
\end{figure}
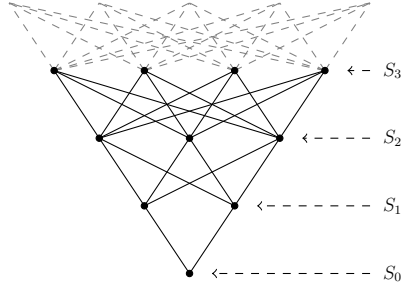

Let us denote antitrees by the letter $\cA$ and also define the edge orientation according to the combinatorial ordering, that is, for every edge $e$ its initial edge is the one in the smaller combinatorial sphere. It turns out that the curvatures of antitrees can be computed explicitly. Namely, define the following quantities:
\be\label{eq:l_n}
\ell_n := \sup_{e\in \cE_v^+\colon v\in S_n}|e|,
\ee
and 
\begin{align} \label{eq:curvatureantitree}
	\rK_0 & :=1, & \rK_{n+1} & := 1 - \frac{s_{n}}{s_{n+2}} 
\end{align}
for all $n\in\Z_{\ge 0}$.

\begin{lemma}\label{lem:curv_AT}
If $\cA$ is an antitree, then 
\begin{align}\label{eq:curvAT1}
\rK_{\rm comb}(\cA) &= \inf_{n\ge 0} {\rK_n}, & \rK_{\rm comb}^{\ess}(\cA) & = \liminf_{n\to \infty}{\rK_n}, 
\end{align}
and
\begin{align}\label{eq:curvAT2}
\rK(\cA) & = \inf_{n\ge 0} \frac{\rK_n}{\ell_n}, & \rK^{\ess}(\cA) & = \liminf_{n\to \infty}\frac{\rK_n}{\ell_n}. 
\end{align}
\end{lemma}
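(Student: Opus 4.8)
The plan is to compute both curvatures vertex by vertex, using that in an antitree all combinatorial data depend only on the index of the sphere containing the vertex. First I would record the edge counts induced by the chosen orientation: every $v\in S_n$ with $n\ge 1$ has all outgoing edges pointing into $S_{n+1}$ and all incoming edges coming from $S_{n-1}$, so that $\#(\cE_v^+)=s_{n+1}$ and $\#(\cE_v^-)=s_{n-1}$, whereas the root $o\in S_0$ satisfies $\#(\cE_o^+)=s_1$ and $\#(\cE_o^-)=0$. Substituting these into \eqref{eq:curv_comb} gives, for $v\in S_n$ with $n\ge 1$,
\[
\rK_{\rm comb}(v)=1-\frac{s_{n-1}}{s_{n+1}}=\rK_n,
\]
and $\rK_{\rm comb}(o)=1=\rK_0$; hence $\rK_{\rm comb}$ is constant on each sphere $S_n$, with value $\rK_n$.

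Since the spheres $\{S_n\}_{n\ge 0}$ are finite and partition $\cV$, the infimum over all vertices collapses to an infimum over $n$, giving $\rK_{\rm comb}(\cA)=\inf_{n\ge 0}\rK_n$. For the essential version I would read $\liminf_{v\in\cV}$ as $\sup_X\inf_{v\notin X}$ taken over finite $X\subseteq\cV$; evaluating along the cofinal family $X=S_0\cup\dots\cup S_N$ turns this into $\sup_N\inf_{n>N}\rK_n=\liminf_{n\to\infty}\rK_n$. This proves \eqref{eq:curvAT1}.

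For the metric curvature I would factor \eqref{eq:def_curv} as $\rK(v)=\rK_{\rm comb}(v)\,(\sup_{e\in\cE_v^+}|e|)^{-1}$, so that $\rK(v)=\rK_n/M_v$ for $v\in S_n$, where $M_v:=\sup_{e\in\cE_v^+}|e|$. The point is that $\ell_n=\sup_{v\in S_n}M_v$ is precisely the supremum of the lengths of all edges outgoing from $S_n$. When $\rK_n\ge 0$, minimizing $\rK_n/M_v$ over $v\in S_n$ means maximizing $M_v$, so $\inf_{v\in S_n}\rK(v)=\rK_n/\ell_n$; taking the infimum, respectively the liminf at infinity, over $n$ exactly as above yields \eqref{eq:curvAT2}.

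The step I expect to be the main obstacle is this last replacement of the per-vertex quantity $M_v$ by the per-sphere supremum $\ell_n$ inside the quotient. It relies on $\rK_n\ge 0$, which makes $M\mapsto\rK_n/M$ nonincreasing and thus pairs the supremum $\ell_n$ with the infimum over vertices; this is exactly the regime in which the curvature bound of Lemma~\ref{lem:curv} is meant to be applied. Were some $\rK_n$ negative, the extremal vertex in $S_n$ would instead realize the infimum of $M_v$, and $\ell_n$ would have to be adjusted accordingly. The remaining work is purely bookkeeping: interpreting $\liminf_{v\in\cV}$ along the net of cofinite subsets and checking it reduces to $\liminf_{n\to\infty}$ because every sphere is finite.
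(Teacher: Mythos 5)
Your proposal is correct and takes essentially the same route as the paper, which disposes of the lemma in one line (``direct inspection since $\rK_{\rm comb}(v) = \rK_n$ for all $v\in S_n$''); your write-up simply makes explicit the per-sphere computation $\#(\cE_v^+)=s_{n+1}$, $\#(\cE_v^-)=s_{n-1}$, the reduction of $\liminf_{v\in\cV}$ to $\liminf_{n\to\infty}$ along the cofinal sphere unions, and the pairing of the vertex infimum with the supremum $\ell_n$. Your caveat about negative $\rK_n$ is well taken: with $\ell_n$ defined as a supremum, the identity \eqref{eq:curvAT2} does presuppose $\rK_n\ge 0$ (or edge lengths constant within each generation, so that $\sup_{e\in\cE_v^+}|e|$ does not vary over $v\in S_n$), a hypothesis the paper's one-line proof leaves tacit but which holds in every example where the lemma is applied.
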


\begin{proof}
The proof follows by a direct inspection since $\rK_{\rm comb}(v) = \rK_n$ for all $v\in S_n$ and $n\in\Z_{\ge 0}$.
\end{proof}

Combining Lemma \ref{lem:curv_AT} with the estimates for the corresponding isoperimetric constants (e.g., Corollary \ref{cor:curv_combess}), we immediately end up with the estimates for $\lambda_0(\bH)$ and $\lambda_0^{\ess}(\bH)$. Let us demonstrate this by considering two examples.

\begin{example}[Exponentially growing antitrees]\label{ex:AT1}
Fix $\beta \in \Z_{\ge 2}$ and let $\cA_\beta$ be an antitree with sphere numbers $s_n = \beta^n$. Then $\rK_0 = 1$ and
\be
\rK_n = 1 - \beta^{-2}
\ee
for all $n\in \Z_{\ge 1}$. Hence by Lemma \ref{lem:curv_AT}
\[
\frac{1 - \beta^{-2}}{\ell^\ast(\cA_\beta)}\le \rK(\cA_\beta) \le \frac{1}{\ell^\ast(\cA_\beta)} 
\]
and
\[
\rK^{\ess}(\cA_\beta) = \frac{1 - \beta^{-2}}{\ell_{\ess}^\ast(\cA_\beta)}. 
\]
Applying Lemmas \ref{lem:curv} and \ref{lem:curv_combess} together with Theorem \ref{th:Cheeger} and Lemma \ref{lem:est01}, we get
\be
\frac{(1 - \beta^{-2})^2}{4\,\ell^\ast(\cA_\beta)^2} \le \lambda_0(\bH_\beta) \le \frac{\pi^2}{\ell^\ast(\cA_\beta)^2}, 
\ee
and
\be
\frac{(1 - \beta^{-2})^2}{4\,\ell^\ast_{\ess}(\cA_\beta)^2} \le \lambda_0^{\ess}(\bH_\beta) \le \frac{\pi^2}{\ell^\ast_{\ess}(\cA_\beta)^2}.
\ee
In particular, these bounds imply that the Kirchhoff Laplacian $\bH_\beta$ is uniformly positive if and only if $\ell^\ast(\cA_\beta)<\infty$. Moreover, its spectrum is purely discrete exactly when $\ell^\ast_{\ess}(\cA_\beta) = 0$ (cf. Corollary \ref{cor:curv_combess}).

Finally, let us compare these estimates with the volume growth estimates under the assumption that the tree is equilateral. In this case,
\[
\rK(\cA_\beta) = \rK^{\ess}(\cA_\beta) = 1-\beta^{-2}.
\]
On the other hand, 
\[
\mes(B_n(o)) = \sum_{k=0}^{n-1} \beta^{2k+1} = \beta\frac{\beta^{2n} - 1}{\beta^2-1},
\]
and then \eqref{eq:mudef} implies that $\mu(\cA_\beta) = 2\log(\beta)$. With a little more work one can show that 
\[
\mu_\ast(\cA_\beta) = \mu(\cA_\beta) = 2\log(\beta).
\]
 Indeed, it suffices to note that $\mu_\ast(\cA_\beta) \le \mu(\cA_\beta)$. Moreover, for all $x\in e_{u,v}$ where $e$ connects $S_n$ with $S_{n+1}$,  $n\in\Z_{\ge 0}$ we have 
 \[
 \mes(B_1(x)) \le \mes(B_1(v)) = \beta^{n}+\beta^{n+2} = \beta^n(\beta^2+1)
 \]
  and for all $r>2$
\begin{align*}
	\mes(B_r(x)) & \geq \mes( B_{\floor{r}} (u)) 
	= \mes( B_{n+\floor{r}} (o)) - \mes( B_{n-\floor{r}}(o)) \\
	&\ge \mes( B_{n+\floor{r}} (o)) - \mes( B_{n}(o)) = \sum_{k=n}^{n+\floor{r}-1} \beta^{2k+1} = \beta^{2n+1}\frac{\beta^{2\floor{r}} - 1}{\beta^2-1}.
\end{align*}
Thus, we obtain 
\begin{align*} 
\vol_\ast(r) = \inf_{x \in \cG} \frac{ \mes(B_r(x))}{\mes(B_1(x))} \geq \inf_{n{\ge 0}} \frac{\beta^{2n+1}\frac{\beta^{2\floor{r}} - 1}{\beta^2-1}}{\beta^n(\beta^2+1)} =  
\frac{\beta^{2\floor{r}+1} - \beta}{\beta^4-1},
\end{align*}
which shows that $\mu_\ast(\cA_\beta) \ge 2\log(\beta)$ and hence we are done. 

Notice that the volume growth estimates \eqref{eq:est04} do not provide a reasonable upper bound for large values of $\beta$. \hfill$\lozenge$
\end{example}

\begin{example}  [Polynomially growing antitrees] \label{ex:polyat}
Fix $q \in \Z_{>0}$ and let $\cA^q$ be the antitree with sphere numbers $s_n = (n+1)^q$, $n\ge 0$ (the case $q=1$ is depicted on Figure \ref{fig:antitree}). Then 
\be
\rK_n = 1 - \frac{n^{q}}{(n+2)^{q}} = 1 - \Big(\frac{n}{n+2}\Big)^q = \frac{2q}{n} + \OO(n^{-2}),
\ee
as $n\to \infty$. Hence, by Lemma \ref{lem:curv_AT},
\[
\rK_{\rm comb}(\cA^q) =\rK_{\rm comb}^{\ess}(\cA^q) = 0
\]
and
\begin{align*}
\rK(\cA^q) & = \inf_{n\ge 0} \frac{1}{\ell_n}\left( 1 - \Big(\frac{n}{n+2}\Big)^q \right), & \rK^{\ess}(\cA^q) & =  \liminf_{n\to \infty} \frac{1}{\ell_n}\left( 1 - \Big(\frac{n}{n+2}\Big)^q \right).
\end{align*}

Clearly, further analysis heavily depends on the behavior of the sequence $\{\ell_n\}$. Let us consider one particular case. Fix an $s\ge0$ and assume now that 
\[
|e| = (n+1)^{-s}
\]
for each edge $e$ connecting $S_n$ and $S_{n+1}$. Let us denote the corresponding Kirchhoff Laplacian by $\bH_{q,s}$. It is not difficult to show by applying Theorem \ref{th:sa} that the corresponding pre-minimal operator is essentially self-adjoint whenever $s \leq q+1$, however, $(\cV_q,\varrho_0)$ is complete exactly when $s\in[0,1]$. 

\begin{remark}
In our forthcoming publication we shall show that the pre-minimal operator $\bH_{0}$ is essentially self-adjoint exactly when the corresponding metric graph has infinite volume, that is, when $s \le 2q+1$. Moreover, in the case $s>2q+1$, the deficiency indices of $\bH_0$ are equal to $1$ and one can describe all self-adjoint extensions of $\bH_0$.
\end{remark}
 	
Since $\ell_n=(n+1)^{-s}$ for all $n\in\Z_{\ge 0}$, we get
\begin{align*}
\ell^\ast(\cA^q) & = 1, & \ell^\ast_{\ess}(\cA^q) & = \begin{cases} 1, & s=0\\ 0, & s>0\end{cases},
\end{align*}
and
\be
\rK^{\ess}(\cA^q) = \lim_{n\to \infty} (n+1)^s\left( 1 - \Big(\frac{n}{n+2}\Big)^q\right) = \begin{cases} 0, & s\in[0,1), \\ 2q, & s=1, \\ +\infty, & s>1.\end{cases}
\ee
In the case $s=1$, it is easy to show that the sequence $\{\rK_n/\ell_n\}$ is strictly increasing and hence this is also true for all $s>1$. Hence 
\[
\rK(\cA^q) = \rK(o) = 1,\quad s\ge1.
\]
Moreover, the corresponding isoperimetric constant is given by $\alpha(\cA^q) =\rK(\cA^q) = 1$ (to see this just take the ball $B_1(o)$ as a subgraph $\cG$ and then one gets $\alpha(\cA^q)\le 1$, which together with \eqref{eq:alpha_curv} implies the equality).

Next let us compute $\mu(\cA^q)$ assuming that $s\in[0,1]$ (otherwise we can't apply the result from Section \ref{ss:III.05}). Set
\[
r_n:= \sum_{k=0}^{n-1} \ell_k = \sum_{k=0}^{n-1} \frac{1}{(1+k)^s} =(1+o(1))\times \begin{cases} \frac{n^{1-s}}{1-s}, & s\in[0,1),\\[1mm]\log(n), &s=1,
\end{cases} 
\]
as $n\to \infty$. 
Then 
\[
\vol_o(r_n) = \sum_{k=0}^{n-1}\ell_k s_{k}s_{k+1} = \sum_{k=0}^{n-1}(k+1)^{q-s} (k+2)^{q} = \frac{n^{2q-s+1}}{2q-s+1}(1+o(1))
\]
as $n\to \infty$. Therefore, it is not difficult to show that
\be
\mu(\cA^q) = \mu_o(\cA^q) = \lim_{n\to \infty} \frac{\log(\vol_o(r_n))}{r_n} = \begin{cases} 0, & s\in [0,1),\\ 2q, & s=1.\end{cases}
\ee

Applying Theorem \ref{th:brooks} together with Lemma \ref{lem:curv} and Lemma \ref{lem:curv_combess}, we end up with the following estimates.

\begin{lemma}\label{lem:at_qs} 
Assume $q\in \Z_{\ge 1}$ and $s\in\R_{\ge 0}$. Then
\be
\lambda_0(\bH_{q,s}) = \lambda_0^{\ess}(\bH_{q,s}) = 0
\ee
if and only if $s\in[0,1)$. If $s\ge 1$, then the operator $\bH_{q,s}$ is uniformly positive and 
\begin{align}
\frac{1}{4}\le \lambda_0(\bH_{q,s}) & \le \pi^2, & \lambda_0^{\ess}(\bH_{q,s}) & = \begin{cases} q^2, & s=1,\\ +\infty, & s>1. \end{cases}
\end{align}
\end{lemma}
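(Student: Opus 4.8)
The plan is to treat the three regimes $s\in[0,1)$, $s=1$, and $s>1$ separately, obtaining in each case the lower bounds from the Cheeger estimate \eqref{eq:Cheeger} (fed by the curvature computations already carried out) and the upper bounds from Lemma \ref{lem:est01} together with the volume-growth bound of Theorem \ref{th:brooks}. Since all the relevant quantities have been recorded above—namely $\rK(\cA^q)=1$ and $\alpha(\cA^q)=1$ for $s\ge1$, the value of $\rK^{\ess}(\cA^q)$ in each regime, $\ell^\ast(\cA^q)=1$, $\ell^\ast_{\ess}(\cA^q)$, and $\mu(\cA^q)$—the proof amounts to combining these with the general theorems.

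First I would dispose of the equivalence and of the regime $s\in[0,1)$. For $s\in[0,1]$ the metric space $(\cV,\varrho_0)$ is complete, so Theorem \ref{th:brooks} applies. When $s\in[0,1)$ we have $\mu(\cA^q)=0$, whence $0\le\lambda_0(\bH_{q,s})\le\lambda_0^{\ess}(\bH_{q,s})\le\tfrac14\mu(\cA^q)^2=0$, giving $\lambda_0(\bH_{q,s})=\lambda_0^{\ess}(\bH_{q,s})=0$. Conversely, for $s\ge1$ one has $\alpha(\cA^q)=1$, so the Cheeger bound \eqref{eq:Cheeger} yields $\lambda_0(\bH_{q,s})\ge\tfrac14$; in particular $\lambda_0(\bH_{q,s})\ne0$, so the joint vanishing fails. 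This proves that $\lambda_0(\bH_{q,s})=\lambda_0^{\ess}(\bH_{q,s})=0$ holds precisely when $s\in[0,1)$, and it simultaneously gives the lower bound $\lambda_0(\bH_{q,s})\ge\tfrac14$ for all $s\ge1$. The matching upper bound $\lambda_0(\bH_{q,s})\le\pi^2$ follows from Lemma \ref{lem:est01} and $\ell^\ast(\cA^q)=1$.

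For the essential spectrum at $s=1$ I would squeeze $\lambda_0^{\ess}(\bH_{q,1})$ between two equal values. Since $\rK^{\ess}(\cA^q)=2q$ and $\rK$ is positive, Lemma \ref{lem:curv_combess} gives $\alpha_{\ess}(\cA^q)\ge\rK^{\ess}(\cA^q)=2q$, so \eqref{eq:Cheeger} yields $\lambda_0^{\ess}(\bH_{q,1})\ge\tfrac14(2q)^2=q^2$. For the reverse inequality, completeness still holds at $s=1$ and $\mu(\cA^q)=2q$, so Theorem \ref{th:brooks} gives $\lambda_0^{\ess}(\bH_{q,1})\le\tfrac14\mu(\cA^q)^2=q^2$. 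Hence $\lambda_0^{\ess}(\bH_{q,1})=q^2$.

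Finally, for $s>1$ we have $\rK^{\ess}(\cA^q)=+\infty$, so by Lemma \ref{lem:curv_combess} the isoperimetric constant at infinity satisfies $\alpha_{\ess}(\cA^q)=+\infty$, and Corollary \ref{cor:3.5}(iii) then forces the spectrum of $\bH_{q,s}$ to be purely discrete, i.e.\ $\lambda_0^{\ess}(\bH_{q,s})=+\infty$; the bounds $\tfrac14\le\lambda_0(\bH_{q,s})\le\pi^2$ are obtained exactly as above from $\alpha(\cA^q)=1$ and $\ell^\ast(\cA^q)=1$. The one genuinely delicate point—and the reason two-sided control is possible at all—is the coincidence $\rK^{\ess}(\cA^q)=\mu(\cA^q)=2q$ at the borderline $s=1$: the curvature-based Cheeger bound and the volume-growth Brooks bound must agree in order to pin down the value $q^2$, and this rests on completeness of $(\cV,\varrho_0)$ holding for $s\le1$ (it fails for $s>1$) together with the asymptotics of $\vol_o(r_n)$ computed above.
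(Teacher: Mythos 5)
Your proposal is correct and follows essentially the same route as the paper, whose entire proof is the one-line instruction to combine Theorem \ref{th:brooks} (Brooks upper bounds via $\mu(\cA^q)$, valid since $(\cV,\varrho_0)$ is complete exactly for $s\in[0,1]$) with the curvature lower bounds of Lemmas \ref{lem:curv} and \ref{lem:curv_combess} fed into the Cheeger estimate \eqref{eq:Cheeger}, plus Lemma \ref{lem:est01} for $\lambda_0\le\pi^2$ and the precomputed quantities $\alpha(\cA^q)=1$, $\rK^{\ess}(\cA^q)$ and $\mu(\cA^q)$. You merely spell out the regime-by-regime bookkeeping (including the correct appeal to Corollary \ref{cor:3.5}(iii) for $s>1$, where completeness fails and Brooks is rightly avoided), which is exactly what the paper leaves implicit.
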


\begin{remark}
The exact value of $\lambda_0(\bH_{q,s})$ for $s\ge 1$ or at least its asymptotic behavior with respect to $q$ remains an open problem.\hfill$\lozenge$
\end{remark}
\end{example}

\subsection{Cayley graphs}\label{ss:IV.01}
Suppose $\Gamma$ is a finitely generated (infinite) group with the set of generators $S$. {\em The Cayley graph} $\cC(\Gamma,S)$ of $\Gamma$ with respect to $S$ is the vertex set $\Gamma$ and $u\sim v$ exactly when $u^{-1}v\in S$. This graph is connected, locally finite and regular ($\deg(v) = \#S$ for all $v\in\Gamma$). We assume that the unit element $o$ does not belong to the set $S$ (this excludes loops). The lattice $\Z^d$ is the standard example of a Cayley graph. Notice also that the Bethe lattice $\T_\beta$ is a Cayley graph if either $S=\{a_1,\dots,a_\beta|\, a_i^2=o,\ i=1,\dots,\beta\}$ or $\beta=2N$ and $\Gamma = \mathbb{F}_N$ is a free group of $N$ generators.  

It is known that the positivity of a combinatorial isoperimetric constant $\alpha_{\comb}$ is closely connected with the amenability of the group $\Gamma$ (this is a variant of F\o lner's criterion, see, e.g., \cite[Proposition 12.4]{woe}).

\begin{theorem}\label{th:amenab}
If $\cG_d=\cC(\Gamma,S)$ is the Cayley graph of a finitely generated group $\Gamma$, then $\alpha_{\comb}(\Gamma) = 0$ if and only if $\Gamma$ is an amenable group.
\end{theorem}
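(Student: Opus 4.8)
The plan is to recognize this statement as a graph-theoretic form of F\o lner's criterion and to reduce $\alpha_{\comb}(\Gamma)$ to the edge-isoperimetric (Cheeger) constant of the Cayley graph, after which amenability enters through F\o lner's theorem. First I would use regularity: since $\cC(\Gamma,S)$ has $\deg(v)=\#S=:d$ for every $v$, one has $\deg(X)=d\,\#(X)$ for every finite $X\subseteq\Gamma$, and, recalling that the combinatorial boundary $\partial X$ in \eqref{eq:alpha_comb} is precisely the set $\cE_b(X)$ of edges joining $X$ to its complement, this gives
\[
\alpha_{\comb}(\Gamma)=\frac1d\,\inf_{X}\frac{\#(\cE_b(X))}{\#(X)},
\]
the infimum over finite nonempty $X\subseteq\Gamma$. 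Hence $\alpha_{\comb}(\Gamma)=0$ if and only if there is a sequence $\{X_n\}$ of finite nonempty sets with $\#(\cE_b(X_n))/\#(X_n)\to0$.

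The one genuine computation is to rewrite the edge boundary in terms of group translates. Assuming, as required for $\cC(\Gamma,S)$ to be undirected, that $S=S^{-1}$, every edge has the form $\{v,vs\}$ with $s\in S$, and a directed edge $(v,vs)$ leaves $X$ exactly when $v\in X$ and $vs\notin X$. Counting these directed edges and using that right translation by $s^{-1}$ pairs outgoing with incoming ones, I would obtain
\[
\#(\cE_b(X))=\sum_{s\in S}\#(X\setminus Xs)=\frac12\sum_{s\in S}\#(X\,\triangle\,Xs),
\]
where the last equality uses $\#(X\setminus Xs)=\#(Xs\setminus X)$ (valid because $\#(Xs)=\#(X)$). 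Everything after this is bookkeeping with the finite symmetric set $S$.

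With this identity the two implications are immediate. If $\Gamma$ is amenable, F\o lner's criterion (see, e.g., \cite[Proposition 12.4]{woe}) yields a sequence $\{X_n\}$ with $\#(X_n\,\triangle\,X_ns)/\#(X_n)\to0$ for every $s\in S$; summing over the finitely many $s\in S$ gives $\#(\cE_b(X_n))/\#(X_n)\to0$, so $\alpha_{\comb}(\Gamma)=0$. Conversely, if $\alpha_{\comb}(\Gamma)=0$, pick $\{X_n\}$ with $\#(\cE_b(X_n))/\#(X_n)\to0$; since each nonnegative summand satisfies $\#(X_n\,\triangle\,X_ns)\le 2\,\#(\cE_b(X_n))$, we get $\#(X_n\,\triangle\,X_ns)/\#(X_n)\to0$ for every $s\in S$, so $\{X_n\}$ is a F\o lner sequence and $\Gamma$ is amenable.

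The main obstacle is not analytic but conceptual: the argument rests on the nontrivial equivalence between amenability (existence of a left-invariant mean on $\Gamma$) and the existence of a F\o lner sequence, which is F\o lner's theorem. I would simply invoke it in the form stated in \cite{woe} rather than reprove it. The only technical care needed is the reduction to a symmetric generating set and the left-versus-right translate convention in the definition of $\cC(\Gamma,S)$; these do not affect the conclusion, since the left and right F\o lner conditions are each equivalent to amenability and the value of $\alpha_{\comb}$ is insensitive to the choice.
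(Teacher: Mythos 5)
Your proof is correct, and in fact it contains more than the paper does: the paper does not prove Theorem \ref{th:amenab} at all, but states it as a known variant of F\o lner's criterion with a pointer to \cite[Proposition 12.4]{woe}. Your argument --- regularity giving $\deg(X)=d\,\#(X)$, the translate identity $\#(\cE_b(X))=\sum_{s\in S}\#(X\setminus Xs)=\tfrac12\sum_{s\in S}\#(X\,\triangle\,Xs)$, and F\o lner's theorem applied in both directions --- is precisely the standard proof behind that citation, so there is no methodological divergence to report, only that you reconstructed what the paper delegates to the literature. Two small points of hygiene. First, the number of directed edges leaving $X$ along a fixed generator $s$ is $\#\{v\in X:\ vs\notin X\}=\#(X\setminus Xs^{-1})$, not $\#(X\setminus Xs)$; summing over the symmetric set $S$ makes the two sums equal, so your displayed identity is correct, but that reindexing (together with the observation that $us=us'$ forces $s=s'$, so each boundary edge is counted exactly once) is the one place where a line of justification is genuinely needed. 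Second, your closing claim that ``the value of $\alpha_{\comb}$ is insensitive to the choice'' overstates matters: the value of $\alpha_{\comb}$ does depend on the generating set (compare the trees $\T_\beta$ for different $\beta$); only its vanishing or positivity is independent of the choice of finite generating set, which is all the theorem asserts --- and since the statement fixes $\cC(\Gamma,S)$, no invariance is actually used in your proof anyway.
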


Notice that the class of amenable groups contains all Abelian groups, all subgroups of amenable groups, all solvable groups etc. In turn, the class of non-amenable groups includes countable discrete groups containing free subgroups of two generators. For further information on amenability and Cayley graphs we refer to \cite{mei, moh, pi, psc, wag, woe}.    

Combining Theorem \ref{th:amenab} with Corollary \ref{cor:alpha_comb} and Corollary \ref{cor:buser}, we arrive at the following result.

\begin{lemma}
Let $\cG_d$ be a Cayley graph $\cC(\Gamma,S)$ of a finitely generated group $\Gamma$. Also, let $|\cdot|\colon \cE\to \R_{>0}$ and $\cG = (\cG_d,|\cdot|)$ be a metric graph. Then:
\begin{itemize}
\item[(i)] If $\Gamma$ is non-amenable, then $\lambda_0(\bH)>0$ if and only if $\ell^\ast(\cG)<\infty$. Moreover, the spectrum of $\bH$ is purely discrete if and only if $\ell_{\ess}^\ast(\cG)=0$.
\item[(ii)] If $\Gamma$ is amenable, then $\lambda_0(\bH)=\lambda_0^{\ess}(\bH)=0$ whenever $\ell_\ast(\cG)>0$.
\end{itemize}
\end{lemma}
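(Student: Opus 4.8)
The plan is to translate every spectral statement into an assertion about the combinatorial isoperimetric constant $\alpha_{\comb}$ and then invoke Theorem~\ref{th:amenab} together with Corollaries~\ref{cor:alpha_comb} and~\ref{cor:buser}. The only genuinely group-theoretic input is the F\o lner-type dichotomy of Theorem~\ref{th:amenab}: for $\cC(\Gamma,S)$ the group $\Gamma$ is non-amenable precisely when $\alpha_{\comb}>0$, and amenable precisely when $\alpha_{\comb}=0$. I would also record two elementary facts. First, the monotonicity $\alpha_{\comb}^{\ess}\ge\alpha_{\comb}$, which is immediate from the definitions \eqref{eq:alpha_comb} because deleting a finite vertex set only shrinks the family over which the defining infimum is taken (so $\alpha_{\comb}(\cV\setminus X)$ increases with $X$, and the supremum over $X$ dominates the value at $X=\varnothing$). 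Second, the observation behind Remark~\ref{rem:3.3}(ii) that $\ell^\ast_{\ess}(\cG)=0$ forces $\ell^\ast(\cG)<\infty$: taking $\varepsilon=1$ leaves only finitely many edges of length exceeding $1$, each of finite length, so their maximum is finite.

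For part (i), suppose $\Gamma$ is non-amenable, so $\alpha_{\comb}>0$ and hence $\alpha_{\comb}^{\ess}\ge\alpha_{\comb}>0$. The equivalence $\lambda_0(\bH)>0\Leftrightarrow\ell^\ast(\cG)<\infty$ then splits into two one-line arguments: if $\ell^\ast(\cG)<\infty$, Corollary~\ref{cor:alpha_comb}(i) yields $\lambda_0(\bH)>0$; conversely, if $\ell^\ast(\cG)=\infty$, Remark~\ref{rem:3.3}(i) (equivalently, the first estimate in \eqref{eq:est01}) gives $\lambda_0(\bH)=0$. For the discreteness criterion I would argue similarly: if $\ell^\ast_{\ess}(\cG)=0$ then $\ell^\ast(\cG)<\infty$ by the observation above, and since $\alpha_{\comb}^{\ess}>0$, Corollary~\ref{cor:alpha_comb}(iii) shows that $\sigma(\bH)$ is purely discrete; conversely, if $\sigma(\bH)$ is discrete then $\lambda_0^{\ess}(\bH)=+\infty$, which together with the second estimate in \eqref{eq:est01} forces $\ell^\ast_{\ess}(\cG)=0$.

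For part (ii), suppose $\Gamma$ is amenable, so $\alpha_{\comb}=0$, and assume $\ell_\ast(\cG)>0$. Here I would distinguish two cases according to the finiteness of $\ell^\ast(\cG)$. If $\ell^\ast(\cG)=\infty$, then Remark~\ref{rem:3.3}(i) immediately gives $\lambda_0(\bH)=\lambda_0^{\ess}(\bH)=0$. If instead $\ell^\ast(\cG)<\infty$, then both hypotheses of Corollary~\ref{cor:buser} (namely $\ell_\ast(\cG)>0$ and $\ell^\ast(\cG)<\infty$) hold, so conditions (i)--(iv) there are equivalent; since (iii) fails because $\alpha_{\comb}=0$, conditions (i) and (ii) must fail as well, i.e.\ $\lambda_0(\bH)=\lambda_0^{\ess}(\bH)=0$.

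The argument is essentially bookkeeping once the combinatorial translation is in place, so I do not expect a deep obstacle. The one point requiring care is that Corollary~\ref{cor:buser} presupposes $\ell^\ast(\cG)<\infty$; the case $\ell^\ast(\cG)=\infty$ must therefore be peeled off by hand and dispatched via the trivial upper bound \eqref{eq:est01}. It is also worth noting that only the monotonicity $\alpha_{\comb}^{\ess}\ge\alpha_{\comb}$ is needed for the discreteness half of part (i); the sharper equality $\alpha_{\comb}^{\ess}=\alpha_{\comb}$, which would follow from vertex-transitivity of Cayley graphs, is not required.
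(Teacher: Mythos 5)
Your proposal is correct and takes essentially the same approach as the paper: the paper proves this lemma in a single line by combining Theorem \ref{th:amenab} with Corollary \ref{cor:alpha_comb} and Corollary \ref{cor:buser}, which is precisely your strategy. Your write-up merely makes explicit the bookkeeping the paper leaves implicit --- the monotonicity $\alpha_{\comb}^{\ess}\ge\alpha_{\comb}$, peeling off the case $\ell^\ast(\cG)=\infty$ via Remark \ref{rem:3.3}(i), and the observation that $\ell^\ast_{\ess}(\cG)=0$ forces $\ell^\ast(\cG)<\infty$ so that Corollary \ref{cor:alpha_comb} applies --- and all of these steps are sound.
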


\begin{remark}
\begin{itemize}
\item[(i)] If $\Gamma$ is an amenable group, then the analysis of $\lambda_0(\bH)$ and $\lambda_0^{\ess}(\bH)$ in the case $\ell_\ast(\cG)=0$ remains an open (and, in our opinion, rather complicated) problem.
\item[(ii)] The volume growth provides a number of amenability criteria. For example, groups of polynomial or subexponential  growth are amenable. For further results and references we refer to \cite{psc}. 
\item[(iii)] Using a completely different approach, the inequality $\lambda_0(\bH)>0$ was proved recently in \cite[Theorem 4.16]{car} for Cayley graphs of free groups under the additional symmetry assumption that edges in the same edge orbit have the same length.
\end{itemize}
\end{remark}

\appendix

\section{Cheeger's inequality for discrete Laplacians}\label{sec:app}

Let $\cG_d = (\cV, \cE)$ be an (unoriented) graph with countably infinite sets of vertices $\cV$ and edges $\cE$. Also, assume that Hypothesis \ref{hyp:locfin} is satisfied. Let $m\colon \cV\to \R_{>0}$ and $b\colon \cV\times\cV\to \R_{\ge0}$ be weight functions such that $b(u,v)=b(v,u)$ for all $u,v\in \cV$ and $b(u,v)\neq 0$ only if $u\sim v$. In fact, $b$ can be considered as a weight function on the edge set $\cE$. Usually, the triple $(\cV,m,b)$ is called a {\em weighted graph}. With every such a triple one can associate a Laplace operator defined by the difference expression
\be\label{eq:wLap}
(\tau f)(v) := \frac{1}{m(v)}\sum_{u\sim v} b(u,v)(f(v) - f(u)), \quad v\in\cV.
\ee
Since the graph $\cG_d$ is locally finite, $\tau$ is well defined on compactly supported functions and hence gives rise to a nonnegative symmetric pre-minimal operator in $\ell^2(\cV;m)$. Let us denote its Friedrichs extension by $\rh$.

 The Cheeger inequality for $\rh$ was proved recently in \cite{bkw15} by using the notion of intrinsic metrics on graphs (see Theorem 3.1 and Theorem 3.3 in \cite{bkw15}). The main aim of this section is to give a slight improvement to this estimate. Namely, let $d\colon \cE\to \R_{>0}$ be a weight (or edge lengths). Similar to \cite{bkw15}, we shall call $d$ {\em intrinsic} on $\cG_d$ (with respect to $m$ and $b$) if the following inequality
 \begin{equation} \label{eq:intr}
	\sum_{e \in \cE_v} d(e)^2 b(e) \; \leq \; m(v) 
	\end{equation}
holds for all $v \in \cV$. 

For every $X \subseteq \cV$, we define its boundary edges by
		\begin{equation*}
		\cE_b(X) = \{e \in \cE| \; e \text{ connects } X \text{ and } \cV \backslash X \}.
		\end{equation*}
For any $U \subseteq \cV$, define
		\begin{equation} \label{eq:IsoX}
		\alpha_d (U) :=   \inf_{ \substack{X \subseteq U \\X \text{ finite}}} \; \frac{(d\cdot b)(\cE_b(X))}{m(X)},
		\end{equation}
where for $X \subseteq \cV$,
		\begin{align*}
		m(X) & = \sum_{v \in X} m(v), & (d\cdot b)(\cE_b(X)) & =  \sum_{e \in \cE_b(X)} d(e)b(e).
		\end{align*}
We define the isoperimetric constant with respect to $d$ by
		\begin{equation}\label{eq:Iso}
		\alpha := \alpha_d(\cV).
		\end{equation}
The isoperimetric constant at infinity is given by
		\begin{equation}\label{eq:IsoEss}
		\alpha_{\ess} := \sup_{ \substack{X \subseteq \cV \\X \text{ finite}}} \alpha_d ( \cV \backslash X).
		\end{equation}
		
\begin{theorem}\label{th:cheegerdiscr}
If $d$ is an intrinsic weight, then
	\begin{align} \label{eq:CheegerD}
		\lambda_0(\rh) & \geq \frac{1}{2}\alpha^2, & \lambda_0^ {\ess}(\rh) & \geq \frac{1}{2} \alpha_{\ess}^2.
	\end{align}
\end{theorem}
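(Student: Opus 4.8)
The plan is to reduce the first bound to the functional inequality
\[
\alpha^2\,\|f\|_{\ell^2(\cV;m)}^2 \;\le\; 2\,\gt_\rh[f] \qquad \text{for all } f\in C_c(\cV),
\]
and then invoke the Rayleigh quotient together with the fact that $C_c(\cV)$ is a form core for the Friedrichs extension $\rh$ to conclude $\lambda_0(\rh)\ge\tfrac12\alpha^2$. Since $\gt_\rh[|f|]\le\gt_\rh[f]$ while $\||f|\|_{\ell^2(\cV;m)}=\|f\|_{\ell^2(\cV;m)}$, I may assume throughout that $f$ is real and nonnegative.

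The first ingredient is a discrete co-area (layer-cake) identity. For a nonnegative finitely supported $f$, set $\Omega_f(t):=\{v\in\cV:\ f(v)>t\}$, a finite subset of $\cV$ for each $t>0$. A direct bookkeeping argument — for an edge $e=\{u,v\}$ with $f(u)\ge f(v)$ one has $e\in\cE_b(\Omega_f(t))$ precisely when $t\in[f(v),f(u))$ — yields $\sum_{e=\{u,v\}\in\cE} d(e)b(e)\,|f(u)-f(v)| = \int_0^\infty (d\cdot b)(\cE_b(\Omega_f(t)))\,dt$, while Fubini gives $\int_0^\infty m(\Omega_f(t))\,dt=\sum_v m(v)f(v)$. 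Bounding $(d\cdot b)(\cE_b(\Omega_f(t)))\ge\alpha\,m(\Omega_f(t))$ from the definition \eqref{eq:Iso} of $\alpha$ and integrating, I obtain $\alpha\sum_v m(v)f(v)\le\sum_{e} d(e)b(e)\,|f(u)-f(v)|$.

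Next I apply this to $f=g^2$ for nonnegative finitely supported $g$. Factoring $|g(u)^2-g(v)^2|=|g(u)-g(v)|\,|g(u)+g(v)|$ and splitting $d(e)b(e)=b(e)^{1/2}\cdot d(e)b(e)^{1/2}$, the Cauchy--Schwarz inequality over $\cE$ gives
\[
\sum_{e=\{u,v\}} d(e)b(e)\,|g(u)-g(v)|\,|g(u)+g(v)| \le \Big(\sum_{e} b(e)\,|g(u)-g(v)|^2\Big)^{1/2}\Big(\sum_{e} d(e)^2 b(e)\,|g(u)+g(v)|^2\Big)^{1/2}.
\]
The first factor is exactly $\gt_\rh[g]^{1/2}$. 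The decisive point is the second factor: using $|g(u)+g(v)|^2\le 2(g(u)^2+g(v)^2)$ and regrouping the edge sum by vertices bounds it by $2\sum_v g(v)^2\sum_{e\in\cE_v} d(e)^2 b(e)$, and here the intrinsic condition \eqref{eq:intr} enters crucially, giving $\sum_{e\in\cE_v} d(e)^2 b(e)\le m(v)$ and hence the bound $2\|g\|_{\ell^2(\cV;m)}^2$. Combining, $\alpha\|g\|_{\ell^2(\cV;m)}^2\le\sqrt{2}\,\gt_\rh[g]^{1/2}\|g\|_{\ell^2(\cV;m)}$, which after squaring is the claimed functional inequality. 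I expect this Cauchy--Schwarz splitting — arranged so that precisely the intrinsic quantity $\sum_{e\in\cE_v} d(e)^2 b(e)$ appears — to be the heart of the argument; everything else is bookkeeping.

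Finally, for the essential spectrum I would use the Persson/Glazman decomposition principle, namely $\lambda_0^{\ess}(\rh)=\sup_X\inf\{\gt_\rh[g]/\|g\|_{\ell^2(\cV;m)}^2:\ g\in C_c(\cV\setminus X),\ g\neq0\}$ with the supremum over finite $X\subset\cV$. For $g$ supported off $X$ the superlevel sets $\Omega_{g^2}(t)$ are finite subsets of $\cV\setminus X$, so the same chain of estimates with $\alpha$ replaced by $\alpha_d(\cV\setminus X)$ yields the inner infimum $\ge\tfrac12\alpha_d(\cV\setminus X)^2$; taking the supremum over $X$ and recalling \eqref{eq:IsoEss} gives $\lambda_0^{\ess}(\rh)\ge\tfrac12\alpha_{\ess}^2$. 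The only genuinely non-routine ingredient here is the Persson-type characterization of $\inf\sigma_{\ess}(\rh)$, which holds for $\rh$ by the same reasoning used for Schr\"odinger operators.
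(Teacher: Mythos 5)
Your proposal is correct and follows essentially the same route as the paper: the discrete co-area formulae, the substitution $f=g^2$, the Cauchy--Schwarz splitting $d(e)b(e)=\sqrt{b(e)}\cdot d(e)\sqrt{b(e)}$ so that the intrinsic condition \eqref{eq:intr} bounds the second factor by $2\|g\|^2_{\ell^2(\cV;m)}$, and then the Rayleigh quotient. The only cosmetic difference is in the essential-spectrum step, where you cite a Persson/Glazman principle for $\rh$ while the paper proves exactly the needed inequality $\lambda_0^{\ess}(\rh)\ge\lambda_0(\rh_{\cV\setminus X})$ directly, by observing that $P\rh P$ (with $P$ the projection onto functions vanishing on the finite set $X$) is a relatively compact perturbation of $\rh$ --- which is in substance the standard proof of the very principle you invoke.
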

					
\begin{remark}					
As it was already mentioned, the Cheeger estimates for weighted graph Laplacians were proved in \cite{bkw15}. However, the definition of the isoperimetric constants in \cite{bkw15} uses {\em metrics} and hence one has to replace $d$ in \eqref{eq:IsoX} by the corresponding path metric $\varrho_d$ defined on $\cV$ in a standard way 
\be\label{eq:pathmetric_d}
\varrho_d(u,v) := 
\inf_{\cP=\{v_0,\dots,v_n\}\colon v_0=u\ v_n=v}\sum_{k} d(e_{v_{k-1},v_k}).  
\ee
Clearly, $\varrho_d$ is intrinsic (in the sense of \cite{bkw15}) if so is the weight $d$ since 
\be\label{eq:rho<d}
\varrho_d(u,v) \le d(u,v)
\ee
for all $u\sim v$.  
Of course, in certain cases this leads to the same isoperimetric constant (e.g., if $\cG_d$ is a tree), however, for graphs having a lot of cycles a construction of an intrinsic metric becomes a highly nontrivial task, which automatically implies complications in calculating the corresponding isoperimetric constant. On the other hand, to construct an intrinsic weight (in the sense of \eqref{eq:intr}) is a rather simple task, in particular, for the weighted Laplacian \eqref{eq:tau} (see Remark \ref{rem:ourLaplace}).
\end{remark} 

The proof of Theorem \ref{th:cheegerdiscr} is literally the same as of Theorem 3.1 and Theorem 3.3 from \cite{bkw15}, however, we shall give it below for the sake of completeness.  

	\begin{lemma} [Co-area formulae]\label{lem:coarea}
Let $m$ and $d$ be weight functions on $\cV$ and $\cE$, respectively. For any $f\colon \cV\to \R_{\ge 0}$ and $t\ge 0$, let $\Omega_t := \Omega_t(f) = \{v \in \cV| \; f(v) > t\}$. Then
		\begin{align}
		& \sum_{v \in \cV} f(v) m(v) = \int_0^\infty m(\Omega_t) \; dt, \label{eq:coarea1} \\
		& \sum_{e \in \cE} d(e) |f(e_i) - f(e_0)| = \int_0^\infty d(\cE_b(\Omega_t)) \; dt, \label{eq:coarea2}
		\end{align}
		where the value $+ \infty$ on both sides of the equation is allowed.
	\end{lemma}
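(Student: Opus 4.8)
The plan is to derive both identities from the elementary layer-cake representation combined with Tonelli's theorem. Since every quantity appearing is nonnegative, the interchange of summation and integration is always legitimate and both sides of each identity may simultaneously equal $+\infty$; thus no integrability hypothesis on $f$ is needed.

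First I would establish \eqref{eq:coarea1}. For each $v\in\cV$ one has the pointwise identity $f(v)=\int_0^\infty \id_{\{f(v)>t\}}\,dt$. Multiplying by $m(v)$, summing over $\cV$, and applying Tonelli to exchange the nonnegative sum with the integral yields
\[
\sum_{v\in\cV} f(v)\,m(v) = \int_0^\infty \Big(\sum_{v\in\cV} m(v)\,\id_{\{f(v)>t\}}\Big)\,dt = \int_0^\infty m(\Omega_t)\,dt,
\]
where the inner sum equals $m(\Omega_t)=\sum_{v\in\Omega_t}m(v)$ directly from the definition of $\Omega_t$.

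The heart of \eqref{eq:coarea2} is a single-edge computation. Fixing $e\in\cE$ with endpoints $e_i$, $e_o$ and assuming without loss of generality that $f(e_i)\ge f(e_o)$, I would observe that $e\in\cE_b(\Omega_t)$ (i.e.\ exactly one endpoint lies in $\Omega_t$) precisely when $f(e_o)\le t<f(e_i)$, so that $\int_0^\infty \id_{\{e\in\cE_b(\Omega_t)\}}\,dt = f(e_i)-f(e_o)=|f(e_i)-f(e_o)|$. Multiplying by $d(e)$, summing over $\cE$, and again invoking Tonelli gives
\[
\sum_{e\in\cE} d(e)\,|f(e_i)-f(e_o)| = \int_0^\infty \Big(\sum_{e\in\cE} d(e)\,\id_{\{e\in\cE_b(\Omega_t)\}}\Big)\,dt = \int_0^\infty d(\cE_b(\Omega_t))\,dt,
\]
as claimed.

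There is no genuine obstacle: the argument is entirely elementary. The only points deserving care are that the swap of sum and integral must rest on Tonelli (not Fubini), so that the formulae persist with the $+\infty$ convention even when $f$ fails to be summable or $\sum_{e} d(e)|f(e_i)-f(e_o)|=\infty$, and that in the single-edge step the two endpoints play symmetric roles — ordering them as $f(e_i)\ge f(e_o)$ is precisely what makes the indicator identity transparent and removes the orientation ambiguity inherent in the absolute value.
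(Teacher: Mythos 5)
Your proposal is correct and follows essentially the same route as the paper's proof: both identities are obtained from the pointwise layer-cake representation followed by a (Tonelli-justified) interchange of sum and integral, and your single-edge observation that $e\in\cE_b(\Omega_t)$ exactly for $t\in[f(e_o),f(e_i))$ is precisely the paper's interval $I_e=[\min\{f(e_0),f(e_i)\},\max\{f(e_0),f(e_i)\})$ written with a WLOG ordering. Your explicit invocation of Tonelli to cover the $+\infty$ case is a minor point the paper leaves implicit, but it is not a different argument.
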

	
	\begin{proof}
For an interval $I \subseteq \R$, let $\id_I(s)$ be its indicator function. Then
		\begin{align*}
			\sum_{v \in \cV} f(v) m(v) &= \sum_{v \in \cV} m(v) \int_0^{f(v)}  dt  = \sum_{v \in \cV} m(v) \int_0^\infty \id_{[0, f(v)) } (t) \; dt \\
			&= \int_0^\infty \sum_{v \in \cV} m(v) \id_{[0 ,f(v)) } (t)  \; dt = \int_0^\infty \sum_{v \in \Omega_t} m(v) \; dt = \int_0^\infty  m( \Omega_t) \; dt.
		\end{align*}
		For every $e \in \cE$, put $I_e := [\min\{ f(e_0), f(e_i) \}, \max\{f(e_0), f(e_i) \}) \subset \R$. We have $t \in I_e$ if and only if $e \in \cE_b(\Omega_t)$. Hence
		\begin{align*}
		 \sum_{e \in \cE} &d(e) |f(e_i) - f(e_0)|  = \sum_{e \in \cE} d(e) \int_{I_e}  dt = \sum_{e \in \cE} d(e) \int_0^\infty \id_{I_e} (t) \; dt \\
		 &= \int_0^\infty \sum_{e \in \cE} d(e) \id_{I_e} (t) \; dt  = \int_0^\infty \sum_{e \in \cE_b(\Omega_t)} d(e) \; dt = \int_0^\infty  d(\cE_b(\Omega_t)) \; dt. \qedhere
		 \end{align*}
	\end{proof}
	
\begin{proof}[Proof of Theorem \ref{th:cheegerdiscr}]
	We start by proving the first estimate in \eqref{eq:CheegerD}. The Rayleigh quotient implies that it suffices to show that
	\begin{equation} \label{eq:ineqproof}
	 2\, \gt_{\rh}[u]  \geq \alpha^2  \| u \|^2_{\ell^2(\cV, m)}
	 \end{equation}
holds for all real-valued $u$ with finite support, where 
	\[	
	\gt_{\rh}[u] = (\rh u,u)_{\ell^2(\cV,m)} = \sum_{e \in \cE} b(e) | u(e_i) - u(e_0)|^2
	\]
is the corresponding quadratic form. 
	Let us now exploit Lemma \ref{lem:coarea} with $f := u^2$. Notice that the set $\Omega_t$ is finite for all $t \geq 0$ and hence by \eqref{eq:IsoX} and \eqref{eq:Iso} we have $(d\cdot b)(\cE_b(\Omega_t)) \ge \alpha\, m(\Omega_t) $ for all $t\ge 0$. 
	 Therefore we get from the co-area formulas
	\begin{align*}
	\alpha\,  \| u \|^2_{\ell^2(\cV, m)}  & =	\alpha \sum_{v \in \cV} u(v)^2 m(v) = \alpha\int_0^\infty m(\Omega_t)\, dt \\
	& \leq \int_0^\infty (d\cdot b)(\cE_b(\Omega_t)) \; dt = \sum_{e \in \cE} d(e) b(e)|u(e_i)^2 - u(e_0)^2|  \\
	& = \sum_{e \in \cE} \sqrt{b(e)} |u(e_i) - u(e_0)|\cdot d(e)\sqrt{b(e)}|u(e_i) + u(e_0)|\\
		&\leq \gt_{\rh}[u]^{1/2} \left( \sum_{e \in \cE}  d(e)^2b(e) (u(e_i) + u(e_0))^2 \right)^{1/2}
	\end{align*}
	by employing the Cauchy--Schwarz inequality in the last step. 
	On the other hand,
	\begin{align*}
		\sum_{e \in \cE} {d(e)^2}{b(e)} (u(e_i) + u(e_0))^2 &\leq 2  \sum_{e \in \cE} {d(e)^2}{b(e)} (u(e_i)^2 + u(e_0)^2) \\
		&=  2 \sum_{v \in \cV} u(v)^2 \sum_{e \in \cE_v} {d(e)^2}{b(e)} \leq 2 \| u \|^2_{\ell^2(\cV, m)},
	\end{align*}
	where we used \eqref{eq:intr} in the last step. 
	
	To get the second inequality, assume $X \subseteq \cV$ finite. Let $P$ denote the orthogonal projection onto the subspace of functions vanishing on $X$. Then $\rh_{\cV\backslash X} := P \rh P$ with $\dom(\rh_{\cV\backslash X}) = \dom(\rh)$ is a relatively compact perturbation of $\rh$. Thus we have
	\[ \lambda_0^{\ess} (\rh) = \lambda_0^{\ess} (\rh_{\cV\backslash X}) \geq \lambda_0 (\rh_{\cV\backslash X}) = \inf_{\substack{u\neq 0 }} \frac{\gt_{\rh}[u]}{\|u\|_{\ell^2(\cV; m)}},   \]
	where the infimum is taken over all real-valued $u$ with finite support which vanish on $X$. For such $u$, note that $\Omega_t(f)$ is contained in $\cV \backslash X$. Hence \eqref{eq:ineqproof} is valid with $\alpha (\cV \backslash X)$ instead of $\alpha$. Then $2 \lambda_0^{\ess} (\rh) \geq \alpha(\cV \backslash X)^2$ and the second estimate follows.
\end{proof}

\begin{remark}\label{rem:ourLaplace}
For the difference expression $\tau_\cG$ defined in Section~\ref{ss:III.00}, the function $m$ is given by \eqref{eq:m_def} and the edge weight $b$ is defined by $b(e):=1/|e|$ for all $e\in\cE$. Hence setting $d(e):=|e|$ for $e\in\cE$, we conclude that $|\cdot|$ is intrinsic in the sense of \eqref{eq:intr} since 
\[
\sum_{e \in \cE_v} d(e)^2 b(e) = \sum_{e \in \cE_v} |e|^2 \frac{1}{|e|} = \sum_{e \in \cE_v} |e| = m(v)
\] 
for all $v\in\cV$. Moreover, in this case we have
\[
(d\cdot b)(\cE_b(X))  =  \sum_{e \in \cE_b(X)} d(e)b(e) = \sum_{e \in \cE_b(X)} |e|\frac{1}{|e|} = \#(\cE_b(X)),
\]
and hence \eqref{eq:Iso} and \eqref{eq:IsoEss} coincide with \eqref{eq:discrCheeger} and \eqref{eq:discrCheegeress}, respectively. In particular, Theorem \ref{th:cheegerdiscr} implies the estimate \eqref{eq:Cheegerdiscr}.
\end{remark}


\noindent
\ack We thank Pavel Exner, Delio Mugnolo, Olaf Post and Wolfgang Woess for useful discussions and hints with respect to the literature.
We also thank the referee for the careful reading of our manuscript and hints with respect to the literature.

\end{document}